\theoremstyle{definition}
\newtheorem{defi}{Definition}[section]
\newtheorem{Eg}{\textbf{Example}}[section]
\newtheorem{Rq}{\textbf{Remark}}[section]
\newtheorem{Not}{\textbf{Notation}}[section]
\newtheorem*{Eg*}{Example}
\newtheorem*{defi*}{Definition}
\newtheorem*{Rq*}{\textbf{Remark}}
\providecommand{\customgenericname}{}
\newcommand{\newcustomtheorem}[2]{%
	\newenvironment{#1}[1]
	{%
		\renewcommand\customgenericname{#2}%
		\renewcommand\theinnercustomgeneric{##1}%
		\innercustomgeneric
	}
	{\endinnercustomgeneric}
}
\theoremstyle{plain}
\newtheorem{Prop}[defi]{Proposition}
\newtheorem{Lemme}[defi]{Lemma}
\newtheorem{Cor}[defi]{Corollary} 
\newtheorem{thm}[defi]{Theorem}
\newtheorem*{thm*}{Theorem}
\newtheorem*{Lemme*}{Lemma}
\newtheorem*{Prop*}{Proposition}
\providecommand{\customgenericname}{}
\newcommand{\newcustomtheoremplain}[2]{%
	\newenvironment{#1}[1]
	{%
		\renewcommand\customgenericname{#2}%
		\renewcommand\theinnercustomgenerictwo{##1}%
		\innercustomgenerictwo
	}
	{\endinnercustomgeneric}
}
\newcommand{\IEM}[2]{\llbracket #1,#2 \rrbracket}
\newcommand{\K}{\ensuremath{\mathbb{K}}}
\newcommand{\A}{\ensuremath{\mathcal{A}}}
\newcommand{\qsh}{quasi-shuffle}
\newcommand{\sh}{shuffle}
\NewDocumentCommand{\fourletters}{ m  m m m }{\ensuremath{\mathbf{#1}~\mathbf{#2}~\mathbf{#3}~\mathbf{#4}}}
\NewDocumentCommand{\threeletters}{ m m m }{\ensuremath{\mathbf{#1}~\mathbf{#2}~\mathbf{#3}}}
\NewDocumentCommand{\twoletters}{ m m }{\ensuremath{\mathbf{#1}~\mathbf{#2}}}
\NewDocumentCommand{\MZV}{ }{Multiple Zeta Value}
\NewDocumentCommand{\MZVs}{ }{Multiple Zeta Values}
\NewDocumentCommand{\AZV}{}{Arborified Zeta Value}
\NewDocumentCommand{\AZVs}{}{Arborified Zeta Values}
\NewDocumentCommand{\Shin}{}{Shintani}
\NewDocumentCommand{\SZV}{}{\Shin{} Zeta Value}
\NewDocumentCommand{\SZVs}{}{\Shin{} Zeta Values}
\NewDocumentCommand{\Sch}{}{Schroeder}
\DeclareMathOperator{\ew}{\emptyset} 
\DeclareMathOperator{\QSh}{Qsh}
\DeclareMathOperator{\Sh}{Sh}
\DeclareMathOperator{\Suum}{\mathcal{S}}
\DeclareMathOperator{\Suumcv}{\Suum_{\N^*}^{\rm conv}}
\DeclareMathOperator{\dt}{dt}
\DeclareMathOperator{\du}{du}
\DeclareMathOperator{\ev}{ev}
\DeclareMathOperator{\angles}{Ang}
\DeclareMathOperator{\FI}{FI}
\DeclareMathOperator{\FS}{FS}
\DeclareMathOperator{\Tree}{Tree}
\DeclareMathOperator{\Treecv}{\Tree(\N^*)^{\mathrm{conv}}}
\DeclareMathOperator{\Schtree}{T} 
\DeclareMathOperator{\Dend}{Dend}
\DeclareMathOperator{\Tridend}{Tridend}
\DeclareMathOperator{\flaten}{flat}
\DeclareMathOperator{\Int}{int}
\DeclareMathOperator{\B}{B}
\DeclareMathOperator{\BT}{\mathcal{BT}}
\DeclareMathOperator{\Borel}{\mathcal{B}}
\DeclareMathOperator{\Bint}{\mathfrak{B}}
\DeclareMathOperator{\Neg}{\mathcal{N}}
\DeclareMathOperator{\qshuffle}{\overline{\shuffle}}
\newcommand{\calA}{\mathcal{A}}
\newcommand{\calP}{\mathcal{P}}
\newcommand{\calS}{\Suum}
\newcommand{\calW}{\mathcal{W}}
\newcommand{\Dendcv}{\ensuremath{\Dend(\ensuremath{\{x,y\}})^{\rm conv}}}
\newcommand{\W}{\mathcal{W}_{\N^*}}
\newcommand{\Wcv}{\mathcal{W}^{\mathrm{conv}}_{\N^*}}
\newcommand{\Wxy}{\mathcal{W}_{\{x,y\}}}
\newcommand{\Wxycv}{\mathcal{W}^{\mathrm{conv}}_{\{x,y\}}}
\NewDocumentCommand{\circlecoloured}{m}{\begin{tikzpicture}
		\filldraw[color=#1] (0,0) circle (1pt);
\end{tikzpicture}}
\NewDocumentCommand{\labY}{m}{\raisebox{-0.4\height}{\begin{tikzpicture}[line cap=round,line join=round,>=triangle 45,x=0.4cm,y=0.4cm]
			\draw [line width=.5pt] (0.,0.)-- (0.,1.);
			\draw [line width=.5pt] (0.,1.)-- (-1.,2.);
			\draw [line width=.5pt] (0.,1.)-- (1.,2.);
			%decoration
			\draw[above] (0,1) node {\footnotesize{#1}}; 
\end{tikzpicture}}}
\NewDocumentCommand{\labbalaisg}{m m}{\raisebox{-0.3\height}{\begin{tikzpicture}[line cap=round,line join=round,>=triangle 45,x=0.4cm,y=0.4cm]
			\draw (0,0)-- (0,1);
			\draw (0,1)-- (-1,2);
			\draw (-0.5,1.5)-- (0,2);
			\draw (0,1)-- (1,2);
			%decorations
			\draw[above] (0,1) node {\footnotesize{#1}};
			\draw[above] (-0.5,1.5) node {\footnotesize{#2}};
\end{tikzpicture}}}
\NewDocumentCommand{\labbalaisd}{m m}{\raisebox{-0.3\height}{\begin{tikzpicture}[line cap=round,line join=round,>=triangle 45,x=0.4cm,y=0.4cm]
			\draw (0,0)-- (0,1);
			\draw (0,1)-- (-1,2);
			\draw (0.5,1.5)-- (0,2);
			\draw (0,1)-- (1,2);
			%decorations
			\draw[above] (0,1) node {\footnotesize{#1}};
			\draw[above] (0.5,1.5) node {\footnotesize{#2}};
\end{tikzpicture}}}
\NewDocumentCommand{\labbalais}{m m}{\raisebox{-0.3\height}{\begin{tikzpicture}[line cap=round,line join=round,>=triangle 45,x=0.4cm,y=0.4cm]
			\draw (0,0)-- (0,2);
			\draw (0,1)-- (-1,2);
			\draw (0,1)-- (1,2);
			%decorations
			\draw (0.4,1.75) node {\footnotesize{#1}};
			\draw (-0.4,1.75) node {\footnotesize{#2}};
\end{tikzpicture}}}
\NewDocumentCommand{\labbalaisbis}{m}{\raisebox{-0.3\height}{\begin{tikzpicture}[line cap=round,line join=round,>=triangle 45,x=0.4cm,y=0.4cm]
			\draw (0,0)-- (0,2);
			\draw (0,1)-- (-1,2);
			\draw (0,1)-- (1,2);
			%decorations
			\draw[right] (0,0.75) node {\footnotesize{#1}};
\end{tikzpicture}}}
\NewDocumentCommand{\labYY}{m m m }{\raisebox{-0.3\height}{\begin{tikzpicture}[line cap=round,line join=round,>=triangle 45,x=0.2cm,y=0.2cm]
			\draw (0,0)-- (0,1);
			\draw (0,1)-- (-2,3);
			\draw (-1.25,2.25)-- (-0.5,3);
			\draw (1.25,2.25)-- (0.5,3);
			\draw (0,1)-- (2,3);
			%decorations
			\draw[above] (0,1) node {\footnotesize{#1}};
			\draw[above] (-1.25,2.25) node {\footnotesize{#2}};
			\draw[above] (1.25,2.25) node {\footnotesize{#3}};
\end{tikzpicture}}}
\NewDocumentCommand{\labYYI}{m m m }{\raisebox{-0.3\height}{\begin{tikzpicture}[line cap=round,line join=round,>=triangle 45,x=0.25cm,y=0.25cm]
			\draw (0,0)-- (0,1);
			\draw (0,1)-- (-2,3);
			\draw (0,1) -- (0,2.5);
			\draw (-1.5,2.5)-- (-1,3);
			\draw (0,2.5)-- (0.5,3);
			\draw (0,2.5)-- (-0.5,3);
			\draw (0,1)-- (2,3);
			%decorations
			\draw[left] (0,1) node {\footnotesize{#1}};
			\draw[above] (-1.5,2.5) node {\footnotesize{#2}};
			\draw[above] (0,2.5) node {\footnotesize{#3}};
\end{tikzpicture}}}
\NewDocumentCommand{\alabelledtree}{m m m}{\raisebox{-0.4\height}{\begin{tikzpicture}[line cap=round,line join=round,>=triangle 45,x=0.3cm,y=0.3cm]
			\draw (0,0) -- (0,1);
			\draw (0,1) -- (-2,3);
			\draw (0,1) -- (2,3);
			\draw (-1,2) -- (-1,3);
			\draw (-1,2) -- (0,3);
			\draw (1.5,2.5) -- (1,3);
			%%%%%%%%% décorations des noeuds internes
			\draw[above] (0,1) node{\footnotesize{#1}};
			\draw[left] (-1,2) node{\footnotesize{#2}};
			\draw[above] (1.5,2.5) node{\footnotesize{#3}};
\end{tikzpicture}}}
\newcommand{\peignedroitdec}[6]{\raisebox{-0.5\height}{\begin{tikzpicture}[line cap=round,line join=round,>=triangle 45,x=0.3cm,y=0.3cm]
			% Etage 1
			\draw (0,0)--(0,1);
			%Etage 2
			\draw (0,1)--(-1,2) node[left]{$#1$};
			% Etage 3
			\draw (0,1)--(6,7);
			\draw (2,3)--(1,4) node[left]{$#2$};
			%Etage...
			\draw (3.,4.5) node[rotate=45]{$\cdots$};
			%Etage k
			\draw (5,6)--(4,7) node[left]{$#3$};
			%Décorations
			\draw (0,1) node[right]{\scriptsize{$#4$}};
			\draw (2,3) node[right]{\scriptsize{$#5$}};
			\draw (5,6) node[right]{\scriptsize{$#6$}};
\end{tikzpicture} }}
\newcommand{\peignegauchedec}[6]{\raisebox{-0.5\height}{\begin{tikzpicture}[line cap=round,line join=round,>=triangle 45,x=0.3cm,y=0.3cm]
			% Etage 1
			\draw (0,0)--(0,1);
			%Etage 2
			\draw (0,1)--(1,2) node[right]{$#1$};
			% Etage 3
			\draw (0,1)--(-6,7);
			\draw (-2,3)--(-1,4) node[right]{$#2$};
			%Etage...
			\draw (-3.,4.5) node[rotate=135]{$\cdots$};
			%Etage k
			\draw (-5,6)--(-4,7) node[right]{$#3$};
			%Décorations
			\draw (0,1) node[left]{\scriptsize{$#4$}};
			\draw (-2,3) node[left]{\scriptsize{$#5$}};
			\draw (-5,6) node[left]{\scriptsize{$#6$}};
\end{tikzpicture}}}
\newcommand{\labYI}[4]{\raisebox{-0.3\height}{\begin{tikzpicture}[line cap=round,line join=round,>=triangle 45,x=0.3cm,y=0.3cm]
			\draw (0,0)-- (0,1);
			\draw (0,1)-- (-2,3);
			\draw (0,1)-- (2,3);
			\draw (-1.5,2.5)-- (-1,3);
			\draw (0.75,1.75)-- (-0.5,3);
			\draw (1.5,2.5)-- (1,3);
			%decorations
			\draw[above] (0,1) node {\footnotesize{#1}};
			\draw[above] (-1.5,2.5) node {\footnotesize{#2}};
			\draw[above] (0.75,1.75) node {\footnotesize{#3}};
			\draw[above] (1.5,2.5) node {\footnotesize{#4}};
\end{tikzpicture}}}
\newcommand{\labIY}[4]{\raisebox{-0.3\height}{\begin{tikzpicture}[x=0.3cm,y=0.3cm]
			\draw (0,0) -- (0,1);
			\draw (0,1) -- (-2,3);
			\draw (0,1) -- (2,3);
			\draw (0,2.5) -- (0.5,3);
			\draw (0.75,1.75) -- (-0.5,3);
			\draw (-1.5,2.5) -- (-1,3);
			% Décorations 
			\draw (0,1) node[above]{\footnotesize{$#1$}};
			\draw (0.75,1.75) node[above]{\footnotesize{$#2$}};
			\draw (-1.5,2.5) node[above]{\footnotesize{$#3$}};
			\draw (0,2.5) node[above]{\footnotesize{$#4$}};
	\end{tikzpicture}}}
\newcommand{\labIpY}[3]{
\raisebox{-0.3\height}{\begin{tikzpicture}[x=0.3cm,y=0.3cm]
		\draw (0,0) -- (0,1);
		\draw (0,1) -- (-2,3);
		\draw (0,1) -- (2,3);
		\draw (1,2) -- (1,3);
		\draw (1,2) -- (0,3);
		\draw (-1.5,2.5) -- (-1,3);
		% Décorations 
		\draw (1,2) node[right]{\footnotesize{$#2$}};
		\draw (-1.5,2.5) node[above]{\footnotesize{$#3$}};
		\draw (0,1) node[above]{\footnotesize{$#1$}};
\end{tikzpicture}}
}
\newcommand{\labYgI}[4]{
\raisebox{-0.3\height}{\begin{tikzpicture}[x=0.3cm,y=0.3cm]
		\draw (0,0) -- (0,1);
		\draw (0,1) -- (-2,3);
		\draw (0,1) -- (2,3);
		\draw (-0.75,1.75) -- (0.5,3);
		\draw (-1.5,2.5) -- (-1,3);
		\draw (0,2.5) -- (-0.5,3);
		% Décorations 
		\draw (0,1) node[above]{\footnotesize{$#1$}};
		\draw (-0.75,1.75) node[above]{\footnotesize{$#2$}};
		\draw (-1.5,2.5) node[above]{\footnotesize{$#3$}};
		\draw (0,2.5) node[above]{\footnotesize{$#4$}};
\end{tikzpicture}}
}
\providecommand{\keywords}[1]{\textbf{\textit{Keywords.}} #1}
\providecommand{\AMSclass}[1]{\textbf{\textit{AMS classification.}} #1}
	\title{{\bfseries Tridendriform and dendriform Zeta Values \\ from \Sch{} trees }}
	\author[1,2]{Pierre Catoire \thanks{Contact: \href{mailto: catoire_research@proton.me;}{catoire\textunderscore{}research@proton.me
		}}} 	
	\author[3]{Pierre Clavier}
	\author[3]{Douglas Modesto da Fraga Candido}
		\affil[1]{Université d'Artois, UR 2462, Laboratoire de Mathématiques de Lens (LML), F-62300 Lens,
			France
			}
		\affil[2]{Institut Montpellierain Alexander Grothendieck, Université de Montpellier, Case
			Courrier 051 - Place Eugène Bataillon, 34095 Montpellier Cedex 5, France}
		\affil[3]{Université de Haute Alsace, IRIMAS, 12 rue des Frères Lumière, MULHOUSE Cedex 68 093,
			France
			}
\begin{document}	
	\maketitle
	\begin{abstract}
		To build new generalisations of \MZVs{}, we define new spaces of formal series and formal integrals. We show that they are tridendriform and dendriform algebras. This allows us to reinterpret the fact that \MZVs{} are algebra morphisms for shuffles of words in terms of finer tridendriform and dendriform structures. 
		Applying universal properties of \Sch{} trees we obtain generalisations of \MZVs{} that are algebra morphisms for associative products.
		Hence we find new properties of \AZVs{} and state how this enables the computation of some \SZVs{}.
	\end{abstract}
	\keywords
	{
		Shuffle products, multizeta function, mathematical physics, trees, \Sch{} trees, dendriform, tridendriform.
	}
	
	\AMSclass{16W99, 16S10, 11E45, 05C05}
	
	\tableofcontents
	
	\section*{Introduction}
	
	\addcontentsline{toc}{section}{Introduction}

	\subsection*{Notations}
	
	\addcontentsline{toc}{subsection}{Notations}
	
	For any set $\Omega$, we will denote:
	\begin{itemize}
		\item for any set $X$, we denote by $\K X$ the vector space whose basis is the set $X$.
		\item $\Omega^{\star}$ the set of finite words on the alphabet $\Omega$;
		\item $\calW_\Omega$ the unique $\Q$-vector space whose basis is given by elements of $\Omega^{\star}$;
		\item We write $\ew\in\calW_\Omega$ the empty word;
		\item For $\Omega=\N^*$, $\Wcv$ is the sub-vector space of $\W$ spanned by the empty word and words that do not start by $1$;
		\item For $\Omega=\{x,y\}$, $\Wxycv$ is the sub-vector space of $\Wxy$ spanned by the empty word and words that start with $x$ and end with $y$;
		\item for any $n\in\N^*$, we denote $\calW_{\Omega,n}$ the sub-vector space generated by all the words of length exactly $n$;
	\end{itemize}

	\subsection*{\MZVs{} and their generalisations}
	
	\addcontentsline{toc}{subsection}{\MZVs{} and their generalisations}
	
		The \MZVs{}\footnote{also called polyzeta or other names by various authors, but we use here what seems to be now the most widespread designation.} where introduced by Leonard Euler in~1775~\cite{Eu1796} and have been rediscovered many times since then. The modern interest of this topic was sparked by the work of~Ecalle\cite{Ecalle81b} and the systematic study of \MZVs{} started with the works of Hoffman~\cite{Ho92} and Zagier~\cite{Za94}. There are many open conjectures and known results regarding the general theory of \MZVs{} and it is not the purpose of this introduction to present them all. We refer the interested readers to, for example,~\cite{dupont2019valeurs} for a presentation of this exciting field of research.
		
		Let us instead start by giving the relevant definitions of \MZVs{}:
		\begin{defi*}[\MZV{} function] 
		We define the following map:
		\begin{equation} \label{def:MZVs}
		\zeta:\left\lbrace\begin{array}{rcl}
			\Wcv &\longrightarrow & \R, \\
			w_1\dots w_k & \longmapsto & \displaystyle\sum_{1\leq n_k<\dots <n_2 < n_1} \prod_{i=1}^k n_i^{-w_i}, \\
			\ew  & \longmapsto & 1.
		\end{array} \right.
		\end{equation}
	\end{defi*}
	 We then have another representation of the numbers in the image of $\zeta$, given by iterated integrals.
	\begin{defi*}[Integral representations]
		We introduce the following map:
		\begin{equation} \label{Prop:integral_representation_classic}
		\zeta_{\Int}:\left\lbrace\begin{array}{rcl}
			\Wxycv & \longrightarrow & \R, \\
			\omega_1\dots\omega_n & \longmapsto & \displaystyle\int_{0< u_n < \cdots < u_1 < 1} \prod_{i=1}^{n} g_{\omega_i}(u_i)\du_i \\
			\ew  & \longmapsto & 1.
		\end{array} \right.
		\end{equation}
		where:
		\begin{align*}
			g_x:\left\lbrace\begin{array}{rcl}
				\interoo{0 1} & \rightarrow & \R, \\
				t & \mapsto & \frac{1}{t}, 
			\end{array} \right.  && g_y:\left\lbrace\begin{array}{rcl}
				\interoo{0 1} & \rightarrow & \R, \\
				t & \mapsto & \frac{1}{1-t}.
			\end{array} \right. 
		\end{align*}
	\end{defi*}
	A result usually attributed to Kontsevitch but published first in~\cite{Za94}\footnote{although, as pointed out in~\cite{Manchon_16}, a remark of~\cite{Ecalle81b} might refer to this fact.} is that these iterated series and integrals are different ways of writing the same real numbers. These two representations are linked by the following map:
	\begin{defi*}
		The \emph{binarization map} is the unique linear map defined by:
		\[
		\mathfrak{s}:\left\lbrace\begin{array}{rcl}
			\W & \rightarrow & \Wxy, \\
			n_1\dots n_k & \mapsto & x^{n_1-1}yx^{n_2-1}y \dots x^{n_k-1}y.
		\end{array} \right.
		\]
	\end{defi*}
	It restricts to a bijection between $\Wcv$ and $\Wxycv$. This map is a bridge between the two representations of \MZVs:
	\begin{Prop*}[Kontsevitch's relation~\cite{Za94}]
		We have $\zeta=\zeta_{\Int}\circ \mathfrak{s}|_{\Wcv}.$
	\end{Prop*}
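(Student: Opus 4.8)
The statement is linear in the word, so by $\Q$-linearity it suffices to verify the identity on a basis of $\Wcv$, that is, on convergent words $w = n_1 \cdots n_k$ with $n_1 \geq 2$. For such a word, $\mathfrak{s}(w) = x^{n_1-1}y \cdots x^{n_k-1}y$ is a word of length $n = n_1 + \cdots + n_k$ beginning with $x$ and ending with $y$, so it indeed lies in $\Wxycv$ (as already noted, $\mathfrak{s}$ restricts to a bijection $\Wcv \to \Wxycv$) and $\zeta_{\Int}(\mathfrak{s}(w))$ is defined. The plan is to compute this iterated integral directly by expanding the factors coming from the letters $y$ into geometric series.

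Concretely, the letters $y$ in $\mathfrak{s}(w)$ sit at the positions $P_j := n_1 + \cdots + n_j$ for $j = 1, \dots, k$ (with the convention $P_0 := 0$), and all other positions carry an $x$. First I would expand each factor $g_y(u_{P_j}) = (1 - u_{P_j})^{-1} = \sum_{m_j \geq 1} u_{P_j}^{m_j - 1}$, valid since $u_{P_j} \in (0,1)$. As all integrands are nonnegative on the open simplex $1 > u_1 > \cdots > u_n > 0$, Tonelli's theorem lets me interchange the multiple sum over $(m_1, \dots, m_k) \in (\N^*)^k$ with the integral. This reduces the problem to integrating, over the simplex, a monomial $\prod_{i=1}^n u_i^{e_i}$ where $e_i = -1$ whenever $i$ is an $x$-position and $e_{P_j} = m_j - 1$.

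The core computation is the evaluation of $\int_{1 > u_1 > \cdots > u_n > 0} \prod_{i=1}^n u_i^{e_i}\, du_i$. Integrating from the innermost variable $u_n$ outward and telescoping, one finds it equals $\prod_{i=1}^n S_i^{-1}$ with $S_i := \sum_{l=i}^n (e_l + 1)$, each integration step being legitimate precisely because $S_i > 0$. Since $e_l + 1$ is $0$ at an $x$-position and $m_j$ at the position $P_j$, we get $S_i = \sum_{j : P_j \geq i} m_j$; and as exactly $n_j = P_j - P_{j-1}$ consecutive indices $i$ share the value $S_i = m_j + m_{j+1} + \cdots + m_k$, the product collapses to $\prod_{j=1}^k (m_j + \cdots + m_k)^{-n_j}$.

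It remains to recognise the resulting series as $\zeta(w)$. Setting $N_j := m_j + m_{j+1} + \cdots + m_k$ gives a bijection between $(m_1, \dots, m_k) \in (\N^*)^k$ and strictly decreasing tuples $N_1 > N_2 > \cdots > N_k \geq 1$, under which $\sum_{(m_j)} \prod_{j} N_j^{-n_j}$ becomes $\sum_{1 \leq N_k < \cdots < N_1} \prod_{j=1}^k N_j^{-n_j} = \zeta(n_1 \cdots n_k)$. I expect the main obstacle to be essentially bookkeeping: keeping track of which variables contribute which exponent so that the telescoped product lines up correctly with the nesting of the multiple sum, together with the careful (but routine) justification that both the termwise integration and the interchange of sum and integral are licit, a point that ultimately rests on the convergence guaranteed by $n_1 \geq 2$ (controlling the endpoint $u_1 \to 1$) and by the word ending in $y$ (controlling $u_n \to 0$).
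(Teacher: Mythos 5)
Your argument is correct and complete: expanding each $g_y(u_{P_j})=(1-u_{P_j})^{-1}$ into a geometric series, interchanging by Tonelli, evaluating the monomial integral over the simplex by telescoping (with $S_i>0$ guaranteed because the word ends in $y$), and re-indexing via $N_j=m_j+\cdots+m_k$ is exactly the classical proof of Kontsevitch's series--integral identity. The paper itself gives no proof of this statement --- it is quoted from Zagier's paper as a known result --- so there is nothing to compare against; your write-up is the standard argument, correctly handling the two convergence issues ($n_1\geq 2$ forcing the first letter of $\mathfrak{s}(w)$ to be $x$, and the last letter $y$ controlling $u_n\to 0$).
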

	The spaces $\Wxy$ and $\W$ have the structure of commutative algebras\footnote{actually, of Hopf algebras} respectively with the \emph{shuffle} and \emph{quasi-shuffle} as products respectively denoted $\shuffle$ and $\qshuffle$ \footnote{provided $\Omega$ as a commutative semigroup structure, both algebra structures over $\calW_\Omega$ given by $\shuffle$ or $\qshuffle$ are isomorphic~\cite{Ho00}}. These products will play important roles in this paper and are defined in details in
	definitions~\ref{def:quasi_shuffle_words} and~\ref{def:shuffle_words} below. Then, a crucial but well-known result is
	\begin{thm*}
		The maps $\zeta$ and $\zeta_{\Int}$ are respectively algebra morphisms for the quasi-shuffle and the shuffle product: for any two elements $u$ and $v$ of $\Wcv$ and any two elements $u'$ and $v'$ of $\Wxycv$, we have
		\begin{equation} \label{eq:shuffle_stuffle_zeta}
			\zeta(u\cshuffle v)=\zeta(u)\cdot \zeta(v),\qquad \zeta_{\Int}(u'\shuffle v')=\zeta_{\Int}(u')\cdot \zeta_{\Int}(v'). 
		\end{equation}
	\end{thm*}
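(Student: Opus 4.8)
The plan is to establish the two morphism identities separately, since although Kontsevitch's relation links $\zeta$ and $\zeta_{\Int}$ through the binarization map $\mathfrak{s}$, this map does \emph{not} intertwine the quasi-shuffle and the shuffle products, so neither identity follows formally from the other. In both cases the underlying mechanism is the same: the product of the two defining objects (nested series, respectively iterated integrals) lives over a Cartesian product of two totally ordered index sets, and one decomposes this product into pieces that are themselves of the same shape, each piece corresponding to one term of a shuffle-type product. Working throughout in the convergent subspaces $\Wcv$ and $\Wxycv$ guarantees that every series and integral involved converges absolutely, so that all the rearrangements below are legitimate.

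For the integral identity I would take $u'=\omega_1\cdots\omega_n$ and $v'=\omega'_1\cdots\omega'_m$ in $\Wxycv$ and write $\zeta_{\Int}(u')\cdot\zeta_{\Int}(v')$ as a single integral of the product of the integrands over the product simplex $\{0<u_n<\cdots<u_1<1\}\times\{0<v_m<\cdots<v_1<1\}$. Up to a subset of Lebesgue measure zero (the locus where some $u_i$ equals some $v_j$), this product domain is the disjoint union of the $\binom{n+m}{n}$ open simplices obtained by choosing a total order on the $n+m$ variables that restricts to the prescribed orders on the $u$'s and on the $v$'s. Each such total order is exactly an element of the shuffle $u'\shuffle v'$, and restricting the integral to the corresponding simplex reproduces the iterated integral $\zeta_{\Int}$ of that shuffled word. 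Summing over all interleavings yields $\zeta_{\Int}(u'\shuffle v')$; this is Chen's lemma, an application of Fubini's theorem, and I would carry it out either by this direct geometric decomposition or, equivalently, by induction on $n+m$ using the recursive description of $\shuffle$ from Definition~\ref{def:shuffle_words} together with the relation $\zeta_{\Int}(\omega_1 w)=\int_0^1 g_{\omega_1}(u_1)\,\zeta_{\Int}^{<u_1}(w)\,\du_1$.

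For the series identity I would take $u=w_1\cdots w_k$ and $v=w'_1\cdots w'_l$ in $\Wcv$ and expand $\zeta(u)\cdot\zeta(v)$ as a sum over the Cartesian product of the two index domains $\{n_1>\cdots>n_k\geq 1\}$ and $\{m_1>\cdots>m_l\geq 1\}$. Here, in contrast to the integral case, the diagonal locus where indices coincide carries mass and cannot be discarded: partitioning the summation according to the complete relative order of the combined indices produces, besides the strict interleavings, terms in which a summation index $n_i$ is \emph{equal} to some $m_j$, and collecting the two corresponding factors multiplies $n_i^{-w_i}m_j^{-w'_j}$ into a single $n_i^{-(w_i+w'_j)}$. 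The strict interleavings reproduce the shuffle terms and the coincidences reproduce the contracted terms, so the whole sum is precisely $\zeta(u\cshuffle v)$, matching the recursive definition of the quasi-shuffle in Definition~\ref{def:quasi_shuffle_words} (the extra term $[w_1+w'_1]$ encoding the $n_1=m_1$ case). Concretely I would run the induction on $k+l$ by splitting on the three cases $n_1>m_1$, $n_1<m_1$, and $n_1=m_1$, which correspond term by term to the three summands of the quasi-shuffle recursion.

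The genuinely delicate point is the series identity: one must track \emph{all} the ways in which several indices, not just the leading ones, can coincide, and check that the combinatorics of these coincidences is exactly captured by the quasi-shuffle and by the additive semigroup structure on $\N^*$. The recursive splitting on the leading indices reduces this to a clean induction and sidesteps the need for a global inclusion--exclusion, which is why I expect the recursive route to be the cleanest. For the integral identity the only subtlety is the routine verification that the coincidence locus has measure zero and so does not contribute, after which Fubini finishes the argument.
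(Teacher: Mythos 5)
Your proposal is correct, and it is worth noting that the paper itself offers no proof of this statement: it is presented as a classical result with references to Hoffman and Waldschmidt (and the evaluation-map version in Proposition~3.11 is likewise dispatched as ``a well-known fact\dots{} that can be proved using standard analytical techniques''). Your two arguments --- the measure-zero decomposition of the product simplex into shuffle simplices via Fubini for $\zeta_{\Int}$, and the decomposition of the product of index domains into strict interleavings plus diagonal coincidences for $\zeta$ --- are exactly the standard proofs contained in those references, and your observation that the binarization map $\mathfrak{s}$ does not intertwine the two products (so that neither identity follows from the other) is also correct. The only refinement I would suggest for the series identity is to make the induction run through the truncated sums $\zeta_N(w)=\sum_{N>n_1>\dots>n_k\geq 1}\prod_i n_i^{-w_i}$: the three-way split on the leading indices naturally produces sums truncated at the fixed largest index, so one proves the finite identity $\zeta_N(u)\zeta_N(v)=\zeta_N(u\,\overline{\shuffle}\,v)$ by induction on length and then lets $N\to\infty$ using the absolute convergence you already invoked; this is a presentational point, not a gap.
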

	The quasi-shuffle version seems to be due to Hoffmann~\cite{Ho00} but some properties of MZVs appeared earlier in~\cite{Ecalle81,Ecalle81b}. See for example~\cite{Waldschmidt} for a more detailed presentation of these results.
	
	\medskip
	
	Since, \MZVs{} have been generalised in many directions. Again, it is not the purpose of this introduction to be a review, so let us just mention the generalisations that will play a role in this paper. Relevant definitions will be introduced within the main text of this paper.
	\begin{itemize}
		\item Since words can be seen as rooted trees without branching, a natural generalisation of \MZVs{} is to write them as maps with (convergent) rooted trees and forests as domain. This was introduced in~\cite{Manchon_16} and independently in~\cite{Ya20}. A systematic study of these \emph{arborified zeta values} (see definition~\ref{def:azvs}) was started in~\cite{clavier2020double} and further developed in~\cite{clavier2024generalisations}. The iterated series version of these arborified zeta values is given below in definition~\ref{def:azvs}, while the iterated integrals version in written in definition~\ref{def:azvs_integrals}.
		\item Another generalisation of \MZVs{} that will play a role here are the \emph{\SZVs{}} (definition~\ref{def:shintani}). The (single valued) \Shin{} zeta were introduced in~\cite{shintani1976evaluation} and their multiple counterpart in~\cite{cassou1979valeurs,2013multidimensional}. These were further studied in~\cite{matsumoto2003mordell,lopez2023milnor}.
		\item Notice that some results of~\cite{chapoton2022zinbiel} are close to ours (for the dendriform part), since it studies the Zinbiel structure behind \MZVs. The paper~\cite{chapoton2022zinbiel} belongs to the theory of \emph{motivic zeta values}, which is thus related to our results. For this approach to \MZV{} see for example~\cite{gil2017multiple}.
	\end{itemize}
	
	\subsection*{Scope and main results}
	
	\addcontentsline{toc}{subsection}{Scope and main results}
	
	The first goal of this paper is to use the freeness property of a (tri)dendriform structures of rooted trees to build a new generalisation of \MZVs{} to rooted trees. Dendriform algebras were introduced by Loday~\cite{loday1995algebres,loday1993version} and their free objects were described in~\cite{loday2006structure}. Tridendriform algebras were introduced in~\cite{loday2002trialgebras} and their free objects were built in~\cite{zhang2020free,Catoire_23}. Dendriform and tridendriform algebras have been since then quite an active field of research and applied to many areas of mathematics and even physics. We refer the reader to the introduction of~\cite{zhang2020free} for an overview of these applications.
	
	For this, we introduce spaces of formal series in definition~\ref{defi:formal_series} and of formal integrals in definition~\ref{defi:formal_integrals}. We then show that some subspaces of formal series and formal integrals admit respectively a natural tridendriform algebra structure and dendriform algebra structure. Applying the universal properties for these spaces of formal series and formal integrals we obtain generalisations of \MZVs{} to two distinct spaces of Schroeder trees in definitions~\ref{def:triden_zeta} (\emph{tridendriform zeta values}) and~\ref{def:dend_zeta} (\emph{dendriform zeta values}). The tridendriform version (definition~\ref{def:triden_zeta}) is a generalisation of equation~\eqref{def:MZVs} while the dendriform version (definition~\ref{def:dend_zeta}) is a generalisation of equation~\eqref{Prop:integral_representation_classic}. Thus definitions~\ref{def:triden_zeta} and~\ref{def:dend_zeta} completely fulfill our first objective.
	
	\medskip
	
	Our second objective is to study the properties of the aforementioned tridendriform and dendriform zeta values, both algebraic and number-theoretic. Regarding the algebraic part, from their definitions we directly have that the dendriform and tridendriform zeta values are algebra morphisms for the quasi-shuffle and shuffle of Schroeder trees respectively. These products are associative and not commutative generalisations of the quasi-shuffle and shuffle products of words of equation~\eqref{eq:shuffle_stuffle_zeta}.
	
	For the number-theoretic aspects of this question, we show that tridendriform and dendriform zeta values are closely related to arborified zeta values, both in the iterated series version (corollary~\ref{coro:relation_zetas}) and in the iterated integral one (corollary~\ref{coro:relation_zetas_int}). This in turn implies that dendriform and tridendriform zeta values can be written as linear combinations of \MZVs{} with rational coefficients, in a completely explicit way. Thus our second goal is also reached.
	
	\medskip
	
	The third and final goal of this paper is to apply the unravelled tridendriform and dendriform structures to obtain new results regarding \MZVs{} and some of their generalisations. First, we show that the map $\zeta:\Wcv\longrightarrow\R$ and $\zeta_{\Int}:\Wxycv\longrightarrow\R$ of equations~\eqref{def:MZVs} and~\eqref{Prop:integral_representation_classic} can be decomposed as
	\begin{equation*}
	 \zeta=\ev\circ\zeta_{\rm FS},\qquad\zeta_{\rm int}=\ev\circ\zeta_{\rm FI}.
	\end{equation*}
	In these decompositions, $\ev$ are two evaluation maps from spaces of formal series and integrals to real numbers. However, strikingly, $\zeta_{\rm FS}$ and $\zeta_{\rm FI}$ are morphisms of tridendriform and dendriform algebras respectively. In other words, evaluating the \MZVs{} makes us lose some of their algebraic properties. 
	
	We also show that arborified zeta values in their iterated series (resp. integral) form are an algebra morphism for a quasi-shuffle (resp. shuffle) product of decorated Schroeder trees. These results are stated in corollaries~\ref{coro:AZV_alg_morph} and~\ref{cor:azvs_dendri_alg}. Notice that these products are associative and not commutative. As such, they are more natural generalisations to (Schroeder) trees of the quasi-shuffle and shuffle products of words than the ones previously built in~\cite{clavier2020double,clavier2024generalisations}.
	
	Our last applications is for \SZVs{}. We show in theorem~\ref{thm:dend_zeta_series} that dendriform zeta values can be written as iterated series. This series are \Shin{} zetas, so the previous results implies that a large family of \SZVs{} can be written as \MZVs{} with rational coefficients. 
	
	\subsection*{Content}
	
	\addcontentsline{toc}{subsection}{Content}

	The paper is composed of four parts:
	\begin{enumerate}
		\item The first one composed of section~\ref{sec:one} presents the objects underlining most of this paper: tridendriform and dendriform algebras. They are introduced in definitions~\ref{defi:tridend} and~\ref{defi:dend}. We then describe the classical dendriform algebra spanned by words written over a set, and the tridendriform algebra words written over $\N^*$. These structures serve as examples but are also important in the reminder of this work. 
		
		\item  The second part deals with tridendriform structures and generalisations of \MZVs{} as iterated series (equation~\eqref{def:MZVs}). It goes from section~\ref{sec:tridend} where we introduce \emph{formal series} (definition~\ref{defi:formal_series}) to section~\ref{sec:tridend_to_usual} establishing a bridge with usual \MZVs{} (corollary~\ref{coro:relation_zetas}). 
		
		In section~\ref{sec:tridend}, some of the standard properties of usual series are shown to hold in this framework in proposition~\ref{prop:formal_series_properties}. We then introduce in definition~\ref{def:formal_series_MZVs} a subspace of formal series together with an evaluation map (definition~\ref{defi:ev_map_tri}) which sends some  convergent (in some sense) formal series to real numbers. This subspace of formal series is then shown to have the structure of a tridendriform algebra (proposition~\ref{Prop:tridend_Suum}). We then introduce a formal series version of \MZVs{} (definition~\ref{def:zeta_FS}) which, when composed with the evaluation map gives back the usual \MZVs{} of equation~\eqref{def:MZVs}. We further show in proposition~\ref{Lem:zeta_morph_tridend} that this formal series version of \MZVs{} forms a morphism of tridendriform algebras.
		
		We start section~\ref{sec:three} by recalling some definitions, namely of Schroeder trees (definition~\ref{defi:schroeder}) and their tridendriform structures (definition~\ref{def:quasiaction} and theorem~\ref{thm:produit}). Their universal property in the category of tridendriform algebras is recalled in theorem~\ref{thm:univ_prop}. Applying this universal property to the tridendriform structure of formal series, we obtain a map dubbed \emph{tridendriform zeta values} in definition~\ref{def:triden_zeta}, which is by construction a morphism of tridendriform algebras, and in particular an algebra morphism for the quasi-shuffle of \Sch{} trees (equation~\eqref{eq:tridend_zeta_morphism}).
		
		Section~\ref{sec:tridend_to_usual} is the last of the aforementioned second part. It starts with a definition of Schroeder trees decorated on their vertices (definition~\ref{defi:schroeder}) and of a map $\iota$ relating these decorated \Sch{} trees to the ones previously used (definition~\ref{defi:iota}). Thanks to this map, this space of decorated \Sch{} trees inherits a tridendriform algebra structure from the one of usual \Sch{} trees (proposition~\ref{prop:tree_tridend}). We then recall the definition of arborified zeta values as iterated series (definition~\ref{def:azvs}), of the flattening map (definition~\ref{defi:flat_map}) and state in theorem~\ref{thm:knowm_AZV} some known properties of arborified zeta values. We show in theorem~\ref{thm:zeta_tridend} that these various spaces and maps nicely fit together. Specialising this theorem to \emph{convergent Schroeder trees} (definition~\ref{def:Schroeder_tree_convergent}) and applying the evaluation map mentioned above we obtain that tridendriform zeta values are related to arborified zeta values and to the usual \MZVs{} (corollary~\ref{coro:relation_zetas}). This implies a new algebraic property for arborified zeta values (corollary~\ref{coro:AZV_alg_morph}).
		
		\item 	The third part follows roughly the same structure than the first one, but uses dendriform structures rather than tridendriform ones. It starts in section~\ref{sec:dendri} by the introduction of \emph{formal integrals} (definition~\ref{defi:formal_integrals}) and ends with the connection with \MZVs{} in section~\ref{sec:dendri_to_usual} (corollary~\ref{coro:relation_zetas_int}).
		
		Section~\ref{sec:dendri} introduces formal integrals and shows some usual properties of integrals are still true for their formal version. We then look at a particular subspace of formal integrals, namely formal Chen integrals (definition~\ref{def:formal_Chen_int}). It allows us to find back the usual integral representations of \MZVs{} via a formal version of \MZVs{} (definition~\ref{def:formal_MZV}). Then, the evaluation map (definition~\ref{defi:ev_map_dend}) sends convergent words to real numbers. The subspace of Chen integrals together with the evaluation map gives back the usual integral \MZVs{} of equation~\eqref{Prop:integral_representation_classic}. Moreover, we show that the space of Chen's formal integrals has a dendriform structure (proposition~\ref{prop:chen_dend_struct}) and the formal version of \MZVs{} is a morphism of dendriform algebras (proposition~\ref{prop:formal_MZV_dend_map}).
		
		Section~\ref{sec:six} states how the free dendriform algebra is built over decorated binary trees (definitions~\ref{def:binary_tree}) and its universal property (theorem~\ref{thm:univ_prop_dend}). We  get a \emph{dendriform zeta values} applying the universal property of the free dendriform algebra. In particular, we obtain  a dendriform morphism for binary trees decorated by $\{x,y\}$ (equation~\eqref{eq:formal_integrals_building}).
		
		In section~\ref{sec:dendri_to_usual}, the last of this third part, we recall the definition of integral \AZVs{} (definition~\ref{def:azvs_integrals}) as iterated integrals, the flattening map (definition~\ref{def:flat_map_dend}) and state in theorem~\ref{thm:known_AZVs_dend} some known results about integral \AZVs{}. We show in theorem~\ref{thm:dend_diag} that all those maps behave well with each other. Specialising this theorem to convergent binary trees (first point of definition~\ref{def:azvs_integrals}) and applying the evaluation map of formal integrals we get that dendriform zeta values are related to usual \MZVs{} (corollary~\ref{coro:relation_zetas_int}).

		\item The last part is composed of section~\ref{sec:Shintani} dedicated to the study of \SZVs{} (definition~\ref{def:shintani}). We obtain that dendriform zeta values are \SZVs{} (theorem~\ref{thm:dend_zeta_series}).
	\end{enumerate}
	
	\subsection*{Acknowledgement}
	
	 The three authors would like to thank the IRIMAS for inviting the first author to come to the UHA for a short research stay where this project was started and ANR Carplo for providing financial support. The first author would like to thank Laboratoire de Mathématiques Pures et Appliquées of ULCO, the Laboratoire de Mathématiques de Lens of université d'Artois for financing a part of this project and Institut Montpellierain Alexander Grothendieck from université de Montpellier for welcoming me.

\section{Dendriform and Tridendriform algebras} \label{sec:one}

	\subsection{Description of algebraic structures}
	
	Let us start by defining the central structures of this paper, namely tridendriform and dendriform algebras. These definitions are taken from~\cite{burgunder2010tridendriform,loday2002trialgebras,foissy2007bidendriform,ronco2002eulerian}. 
	\begin{defi}\label{defi:tridend}
		Let $A$ be an vector space  endowed with three bilinear operations $\prec,\cdot,\succ$.  We say that $(A,\succ,\prec,\cdot)$ is a \emph{tridendriform algebra} if for all $(x,y,z)\in A^3$:
		\begin{align}
			(x\prec y)\prec z&=x\prec(y*z), \label{eq:tri1}\\
			(x\succ y)\prec z&=x\succ(y\prec z), \label{eq:tri2} \\
			(x* y)\succ z&=x\succ(y\succ z),  \label{eq:tri3} \\
			(x\succ y)\cdot z&=x\succ(y\cdot z),	 \label{eq:tri4} \\
			(x\prec y)\cdot z&=x\cdot(y\succ z), \label{eq:tri5} \\
			(x\cdot y)\prec z&=x\cdot(y\prec z), \label{eq:tri6} \\
			(x\cdot y)\cdot z&=x\cdot (y\cdot z), \label{eq:tri7}
		\end{align}
		where for all $x,y\in A$, we set $x*y\coloneqq x\prec y + x\cdot y +x\succ y$. We respectively call $\prec, \succ, \cdot$ the \emph{left, right and middle products}.
	\end{defi}
	A dendriform algebra can be seen as a tridendriform algebra with a vanishing middle product.
	\begin{defi}\label{defi:dend}
		We say that a tridendriform algebra $(A,\prec,\succ,\cdot)$ is a \emph{dendriform algebra} if $\cdot=0$. Only three relations remains:
		\begin{align}
			(x\prec y)\prec z&=x\prec(y\star z), \label{eq:tri1}\\
			(x\succ y)\prec z&=x\succ(y\prec z), \label{eq:tri2} \\
			(x\star y)\succ z&=x\succ(y\succ z),  \label{eq:tri3} 
		\end{align}
		where $\star\coloneqq\prec+\succ$.
	\end{defi}
	The following simple proposition indicates that (tri)dendriform structures are endowed with a classical associative algebra structure. 
	\begin{Prop}
	 Let $(A,\succ,\prec,\cdot)$ (resp $(A,\succ,\prec)$) be a tridendriform algebra (resp. a dendriform algebra). Then the product $*$ on $A$ defined by $x*y\coloneqq x\prec y + x\cdot y +x\succ y$ (resp. $x\star y\coloneqq x\prec y+x\succ y$) is associative.
	\end{Prop}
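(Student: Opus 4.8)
The plan is to prove associativity of $*$ by expanding both $(x*y)*z$ and $x*(y*z)$ using bilinearity of the three operations together with the seven tridendriform axioms~\eqref{eq:tri1}--\eqref{eq:tri7}, and then matching the resulting terms one by one. The dendriform case will follow immediately by specialising $\cdot=0$, which kills all middle-product terms and leaves only the contributions governed by~\eqref{eq:tri1}--\eqref{eq:tri3} with $\star$ in place of $*$.

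First I would expand the left-hand side. Writing $x*y=x\prec y+x\cdot y+x\succ y$ and distributing over the outer $*$, I get
\[
(x*y)*z=(x*y)\prec z+(x*y)\cdot z+(x*y)\succ z,
\]
and I would treat the three pieces separately. For the left piece $(x*y)\prec z$, I distribute $\prec$ over the three summands of $x*y$ and apply~\eqref{eq:tri1},~\eqref{eq:tri6} and~\eqref{eq:tri2}, obtaining $x\prec(y*z)+x\cdot(y\prec z)+x\succ(y\prec z)$; here~\eqref{eq:tri1} is what reintroduces the full product $*$ on the right. For the middle piece $(x*y)\cdot z$, I similarly use~\eqref{eq:tri5},~\eqref{eq:tri7} and~\eqref{eq:tri4} to obtain $x\cdot(y\succ z)+x\cdot(y\cdot z)+x\succ(y\cdot z)$. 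For the right piece, axiom~\eqref{eq:tri3} applies directly, with no prior distribution, and yields $x\succ(y\succ z)$.

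Next I would expand the right-hand side as $x*(y*z)=x\prec(y*z)+x\cdot(y*z)+x\succ(y*z)$, distributing $\cdot$ and $\succ$ over the three summands of $y*z$. This produces the seven terms $x\prec(y*z)$, $x\cdot(y\prec z)$, $x\cdot(y\cdot z)$, $x\cdot(y\succ z)$, $x\succ(y\prec z)$, $x\succ(y\cdot z)$ and $x\succ(y\succ z)$. Comparing with the seven terms collected from the three pieces of $(x*y)*z$, I would check that the two lists coincide termwise, which establishes the identity.

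There is no genuine obstacle here: the statement is a purely formal consequence of the axioms and the whole argument is bookkeeping. The only point requiring care is to invoke each axiom on the correct summand—in particular to use~\eqref{eq:tri1} (rather than bilinearity alone) on $(x\prec y)\prec z$ and~\eqref{eq:tri3} on the right piece, since these are precisely the two axioms carrying the compound product $*$, so that the reconstituted term $x\prec(y*z)$ appears on both sides. For the dendriform statement the same computation applies with every middle-product term deleted, leaving exactly $x\prec(y\star z)+x\succ(y\prec z)+x\succ(y\succ z)$ on each side.
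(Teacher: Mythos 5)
Your proof is correct and follows exactly the route the paper indicates: expand the associator of $*$ using bilinearity and match terms via the seven axioms, with axioms~\eqref{eq:tri1} and~\eqref{eq:tri3} supplying the two terms containing the compound product. The paper's proof is just a one-line remark to this effect, so your more detailed bookkeeping is the same argument carried out explicitly.
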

	\begin{proof}
		One just needs to write the associator for $*$ (or $\star$) and develop it with the definition using $\prec,\succ$ and $\cdot$.
	\end{proof}
	The definition of tridendriform algebra does not allow the existence of an unit. Hence we need to add it by hand:
	\begin{defi}
		Let $(A,\succ,\prec,\cdot)$  be a tridendriform algebra. We expand the definitions of the products over the space $(\K\otimes A)\oplus (A\otimes \K) \oplus (A\otimes A)$ putting :
		\begin{align}
			&\forall a\in A,& 1\prec a \coloneqq0=a\succ 1&,& a\prec 1\coloneqq a=1\succ a& &\text{  and  }& &a\cdot 1\coloneqq0=1\cdot a. \label{eq:augm}
		\end{align}
		We put $\overline{A}\coloneqq A\oplus \K\cdot 1.$
		We call $\left(\overline{A}, \prec,\cdot,\succ, 1\right)$ the \emph{augmented tridendriform algebra}. 
	\end{defi}
	
	The notion of tridendriform (and dendriform putting $\cdot=0$) comes with its notion of morphisms and subalgebras:
		\begin{defi}
			A \emph{tridendriform morphism} of augmented tridendriform algebra between ${(\overline{A},\prec,\succ, \cdot)}$ and ${(\overline{B},\prec,\succ,\cdot)}$ is a linear map $f:A\rightarrow B$ such that for any $x,y\in A$:
			\begin{align*}
				f(x\prec y)=f(x)\prec f(y), && f(x\succ y)=f(x)\succ f(y), && f(x\cdot y)=f(x)\cdot f(y),
			\end{align*}
			and $f(1)=1$.
		\end{defi}
		\begin{defi}[tridendriform subalgebra]
			Let $(A,\prec,\succ,\cdot)$ be an augmented tridendriform algebra. Let $B\subseteq A$, we say it is a \emph{tridendriform subalgebra} if it is a subalgebra for $(A,*,1)$ and it is closed for the operations $\prec,\succ$ and $\cdot$.
		\end{defi}
	
	\begin{Rq}
		For sake of simplicity, when we talk about tridendriform or dendriform algebra later, it will mean it is an augmented tridendriform or dendriform algebra.
	\end{Rq}
	
\subsection{Dendriform and tridendriform structures for words}
    
We recall classical examples of (tri)dendriform structures on words. These examples are crucial in the theory of \MZVs{} (MZVs), see for example~\cite{Waldschmidt,Ho00}.
	\begin{defi}[shuffle product for words] \label{def:shuffle_words}
			Let $\Omega$ be a set.
		Let $u=u_1\dots u_n$ and $v=v_1\dots v_k$ be two words in $\Omega^{\star}$. We define inductively over $n+k$ the \emph{shuffle product} by $u\shuffle \emptyset=\emptyset\shuffle u=u$ for all $u$ in $\calW_\Omega$ and:
		\[
		u\shuffle v\coloneqq u_1(u_2\dots u_n \shuffle v)+ v_1(u\shuffle v_2\dots v_k).
		\]
		We extend by linearity this product to a linear map $\shuffle:\calW_\Omega^{\otimes 2} \rightarrow \calW_\Omega$.
		We can split this shuffle product into two smaller products $\succ$ and $\prec$ defined inductively by:
		\begin{align*}
			u\prec v = u_1(u_2\dots u_n \shuffle v), &&
			u\succ v=v_1(u\shuffle v_2\dots v_k).
		\end{align*}
	\end{defi}
		\begin{Eg}
		Let $xy$ be an element of 
			$\calW_{\{x,y\}}$, then the shuffle product with itself is:
		\begin{align*}
			xy \shuffle xy&= 4~xxyy + 2~xyxy. 
		\end{align*}
	\end{Eg}
	
	\begin{defi}[quasi-shuffle product for words]  \label{def:quasi_shuffle_words} 
		Let $u=u_1\dots u_n$ and $v=v_1\dots v_k$ be two words in $\W$. We define inductively over $k$ and $l$ the \emph{quasi-shuffle product} by $u\qshuffle \emptyset=\emptyset\qshuffle u=u$ for all $u$ in $\W$ and:
		\[
		u\qshuffle v= u_1(u_2\dots u_n \qshuffle v)+ v_1(u\qshuffle v_2\dots v_k)+(u_1+v_1)(u_2\dots u_n \qshuffle v_2\dots v_k).
		\]
		We can split this shuffle product into three smaller products $\prec,\succ$ and $\cdot$ defined inductively by:
		\begin{align*}
			u\prec v = u_1(u_2\dots u_n \qshuffle v), &&
			u\succ v=v_1(u\qshuffle v_2\dots v_k), &&
			u \cdot v=(u_1+v_1)(u_2\dots u_n \qshuffle v_2\dots v_k).
		\end{align*}
	\end{defi}	
	\begin{Rq}
		Note that with these definitions neither $\ew \prec v$, $u \succ \ew$ nor $\ew\cdot v$ and $u\cdot\ew$ are defined, for $\succ$ and $\prec$ the left and right parts of the shuffle or quasi-shuffle product. For more details see~\cite{Catoire_23}.
	\end{Rq}
	
	\begin{Eg}[for the quasi-shuffle product]
		Consider $\mathbf{1}~\mathbf{2}$ and $\mathbf{3}~\mathbf{2}$ two elements of $\W$, the quasi-shuffle product of those two objects is:
		\begin{align*}
			\mathbf{1}~\mathbf{2} \qshuffle\mathbf{3}~\mathbf{2}
			&=\fourletters{1}{2}{3}{2}+\fourletters{1}{3}{2}{2}+\fourletters{1}{3}{2}{2}+\threeletters{1}{5}{2}+\threeletters{1}{3}{4} \\
			&+\fourletters{3}{2}{1}{2}+\fourletters{3}{1}{2}{2}+\fourletters{3}{1}{2}{2}+\threeletters{3}{3}{2}+\threeletters{3}{1}{4} +\threeletters{4}{2}{2} +\threeletters{4}{2}{2}+\twoletters{4}{4}.
		\end{align*}
	\end{Eg}
	\begin{Rq}
		The construction of tridendriform algebras is here described on the monoid $(\N^*,+)$. One could do this for any monoid $(M,\cdot)$ where the letters of our words are elements of $M$. We will not need this level of generality here and omit it for the sake of simplicity.
	\end{Rq}
	
	Shuffle and quasi-shuffle, with the decompositions presented above, are classical examples of dendriform and tridendriform algebras~\cite{Ebrahimi_Fard_2018}.
	\begin{Prop}
	 Let $\Omega$ be a set. Then $(\calW_\Omega,\prec,\succ)$ (resp. $\left(\W,\prec,\cdot,\succ,\ew\right)$) is a dendriform algebra (resp. an augmented tridendriform algebra).
	\end{Prop}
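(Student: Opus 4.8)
The plan is to reduce the whole statement to the associativity of the two total products $\shuffle$ and $\qshuffle$, which is the only computation of substance. By bilinearity of $\prec,\succ$ and $\cdot$ it suffices to check each defining relation on basis elements, i.e.\ on words $u=u_1\cdots u_n$, $v=v_1\cdots v_k$ and $w=w_1\cdots w_l$. Note that the dendriform claim is the case $\cdot=0$ of the tridendriform one, and that the reason the tridendriform structure is stated over $\N^*$ rather than an arbitrary $\Omega$ is that the middle product uses the semigroup law $+$; so I would treat the quasi-shuffle case (with $*=\qshuffle$) and read off the shuffle case (with $\star=\shuffle$) simply by deleting every occurrence of $\cdot$.

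First I would dispatch the degenerate cases in which one of the three arguments is the unit $\ew$. Here the recursion does not define the one-sided or middle products (as noted in the Remark following Definition~\ref{def:quasi_shuffle_words}), so one must instead invoke the augmentation conventions~\eqref{eq:augm} together with $u\qshuffle\ew=\ew\qshuffle u=u$. Each relation then collapses to a tautology; for instance the first relation with $w=\ew$ reads $(u\prec v)\prec\ew=u\prec v$ on the left and $u\prec(v\prec\ew+v\cdot\ew+v\succ\ew)=u\prec v$ on the right. These checks are routine but must be carried out to justify the augmented structure.

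For words of length at least one I would argue by induction on the total length $N=n+k+l$. The mechanism is that each decomposed product extracts a prescribed first letter---$\prec$ keeps $u_1$, $\succ$ keeps $v_1$ (or $w_1$), and $\cdot$ keeps $u_1+v_1$---and leaves the tails to be recombined by the full product. Expanding both sides of any of the seven relations then produces the same leading letter on each side, and the equality of the remainders is precisely an instance of associativity of $\qshuffle$ applied to the tails, whose total length is $N-1$ or $N-2$. Concretely, writing $u=u_1u'$, the first relation unfolds to $u_1\bigl((u'\qshuffle v)\qshuffle w\bigr)$ against $u_1\bigl(u'\qshuffle(v\qshuffle w)\bigr)$, relation~\eqref{eq:tri5} unfolds to $(u_1+w_1)\bigl((u'\qshuffle v)\qshuffle w'\bigr)$ against $(u_1+w_1)\bigl(u'\qshuffle(v\qshuffle w')\bigr)$, and relation~\eqref{eq:tri7} to $(u_1+v_1+w_1)\bigl((u'\qshuffle v')\qshuffle w'\bigr)$ against $(u_1+v_1+w_1)\bigl(u'\qshuffle(v'\qshuffle w')\bigr)$, the last also using associativity and commutativity of $+$. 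Every relation closes in the same way.

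It remains to supply the associativity of $\qshuffle$ (resp.\ $\shuffle$) used at each step, which I would establish beforehand by its own induction on total length, directly from the recursive definition and independently of the dendriform relations, so as to avoid circularity with the Proposition preceding this statement (which deduces associativity \emph{from} the tridendriform axioms). The main obstacle is exactly this associativity bookkeeping---classical but unavoidable---together with the discipline of tracking all seven cases and the arithmetic of $+$ inside the middle product; once associativity is in hand, every defining relation becomes a one-line consequence of pulling out the leading letters.
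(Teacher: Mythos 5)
Your argument is correct, and it is worth noting that the paper itself offers no proof of this proposition: it is stated as a classical fact with a citation to the literature, so there is nothing internal to compare against. Your reduction is the standard one and it works: each of the seven tridendriform relations, evaluated on words, pulls out the same prescribed leading letter on both sides ($u_1$ for $\prec$, the first letter of the rightmost factor for $\succ$, the sum of the two relevant first letters for $\cdot$), after which the equality of the tails is exactly associativity of $\qshuffle$ applied to $u'$, $v$, $w$ (or the appropriate truncations), plus associativity of $+$ in the seventh relation. Your care on two points is well placed: (i) the unit cases must be handled separately via the augmentation conventions, since the recursive formulas for $\prec$, $\succ$, $\cdot$ do not apply to $\ew$; and (ii) associativity of $\qshuffle$ must be established independently (by its own induction on total length from the recursive definition), since the proposition immediately preceding this one in the paper derives associativity \emph{from} the tridendriform axioms and quoting it here would be circular. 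One small remark: once associativity of $\qshuffle$ is proved beforehand, the induction on $N=n+k+l$ in your main step is no longer needed --- each of the seven relations closes in a single step --- so the structure is cleaner as ``prove associativity by induction, then verify the seven relations directly.'' An alternative non-inductive route would be to use the paper's non-recursive description of $\prec$, $\succ$, $\cdot$ in terms of $(k,l)$-quasi-shuffles and match the corresponding subsets of surjections on both sides of each relation, but your leading-letter argument is at least as economical.
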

	
	We know from~\cite{Catoire_23} that the product of any tridendriform algebra is described by the following specific permutations that we now introduce.
	\begin{defi}\label{defi:qsh_sh2}
	\begin{itemize}
	 \item (Quasi-shuffles/sticky shuffles) Let $k,l\in\N\setminus \{0\}$. A \emph{$(k,l)$-\qsh{}} is a surjective map $\sigma:\IEM{1}{k+l}\twoheadrightarrow\IEM{1}{n}$ such that:
		\[
		\sigma(1)<\cdots<\sigma(k) \text{  and  } \sigma(k+1)<\cdots<\sigma(k+l).
		\]
		We write $\QSh(k,l)$ the set of $(k,l)$-\qsh{}.
		\item (Shuffle) Let $k,l\in\N\setminus \{0\}$. A \emph{$(k,l)$-\sh{}} is an element from $S_{k+l}$ such that:
		\[
		\sigma(1)<\cdots<\sigma(k) \text{  and  } \sigma(k+1)<\cdots<\sigma(k+l).
		\]
		We write $\Sh(k,l)$ the set of $(k,l)$-\sh{}.
	\end{itemize}
	\end{defi}

	Thank to these objects, we can describe the two shuffle products in terms or shuffles/quasi-shuffles (see~\cite{sweedler1969hopf} for the shuffle product and~\cite{GPZ13} for the quasi-shuffle):
	\begin{Lemme}
		Let $\Omega$ be a set. Consider $w=w_1\dots w_k$ and $w'=w_{k+1}\dots w_{k+l}$ two elements either of $\Omega^{\star}$ or $\W$.
		The shuffle product of two elements $w$ and $w'$ of $\Omega^{\star}$ can be described as:
		\[
		w\shuffle w'=\sum_{\sigma\in\Sh(k,l)} w_{\sigma^{-1}(\{1\})}\dots w_{\sigma^{-1}(\{\max(\sigma)\})}.
		\]
		The quasi-shuffle product of two elements $w$ and $w'$ of $\W$ can be described as:
		\[
		w\qshuffle w'=\sum_{\sigma\in\QSh(k,l)} w_{\sigma^{-1}(\{1\})}\dots w_{\sigma^{-1}(\{\max(\sigma)\})},
		\]
		where $w_{\{i,j\}}\coloneqq w_i+w_j$
	\end{Lemme}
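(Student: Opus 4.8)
The plan is to prove both identities simultaneously by induction on the total length $k+l$, matching the recursive definitions of $\shuffle$ and $\qshuffle$ against a partition of the index sets $\Sh(k,l)$ and $\QSh(k,l)$ according to which letters occupy the \emph{first} position of the output word. The point is that the recursions in definitions~\ref{def:shuffle_words} and~\ref{def:quasi_shuffle_words} prepend letters, and the combinatorial descriptions decompose naturally by the preimage of $1$, so the two decompositions line up term by term.

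For the base case, if one of the words is empty, say $w'=\ew$, then $\Sh(k,0)$ (resp.\ $\QSh(k,0)$) contains only the identity of $\IEM{1}{k}$, the right-hand sum collapses to $w_1\dots w_k=w$, and this agrees with $w\shuffle\ew=w$ (resp.\ $w\qshuffle\ew=w$). For the inductive step of the shuffle, I would partition $\Sh(k,l)$ according to the value of $\sigma^{-1}(1)$. Since $\sigma(1)<\cdots<\sigma(k)$ and $\sigma(k+1)<\cdots<\sigma(k+l)$ are strictly increasing and $\sigma$ is a bijection, the minimum $1$ is attained only at position $1$ or at position $k+1$, splitting $\Sh(k,l)$ into two blocks. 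In the first block the output begins with $w_1=u_1$, and the relabeling $\sigma\mapsto\bar\sigma$ with $\bar\sigma(i):=\sigma(i+1)-1$ is a bijection onto $\Sh(k-1,l)$; in the second block the output begins with $w_{k+1}=v_1$, and an analogous relabeling gives a bijection onto $\Sh(k,l-1)$. Applying the induction hypothesis to these two smaller shuffle sets reproduces exactly $u_1(u_2\dots u_n\shuffle v)+v_1(u\shuffle v_2\dots v_k)$, which is the recursive definition of $u\shuffle v$.

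For the quasi-shuffle the same idea applies, but the surjection $\sigma$ may send two indices to $1$. Because both blocks are strictly increasing, $\sigma^{-1}(\{1\})$ is one of $\{1\}$, $\{k+1\}$ or $\{1,k+1\}$, which partitions $\QSh(k,l)$ into three blocks whose outputs start with $u_1$, with $v_1$, and with $u_1+v_1$ respectively. The relabel-and-shift map sends these blocks bijectively to $\QSh(k-1,l)$, $\QSh(k,l-1)$ and $\QSh(k-1,l-1)$, and the induction hypothesis then reproduces the three-term recursion defining $u\qshuffle v$, with the convention $w_{\{1,k+1\}}=w_1+w_{k+1}$ accounting for the middle term.

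The only genuine obstacle is the bookkeeping for the quasi-shuffle: one must verify that each relabeling map is a well-defined bijection preserving both the block-monotonicity conditions and surjectivity. The delicate case is the third block, where one deletes the two preimages of $1$ and decreases every remaining value by $1$; checking that the result is still a surjection onto $\IEM{1}{n-1}$ (rather than merely an injection or a map skipping a value) is where the argument needs care. Everything else is a direct translation between the combinatorial data of $\sigma$ and the letters it places in the output, so no substantial computation is required beyond this verification.
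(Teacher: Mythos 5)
The paper does not prove this lemma; it simply cites \cite{sweedler1969hopf} for the shuffle case and \cite{GPZ13} for the quasi-shuffle case, so there is no in-paper argument to compare against. Your induction on $k+l$, partitioning $\Sh(k,l)$ (resp.\ $\QSh(k,l)$) by the value of $\sigma^{-1}(\{1\})$ and matching the two (resp.\ three) resulting blocks to the terms of the recursive definitions via the delete-and-shift bijections, is the standard proof of this classical fact and is correct; you also rightly identify the surjectivity check for the $\{1,k+1\}$ block as the only point needing care, and it goes through since removing the full preimage of $1$ from a surjection onto $\IEM{1}{n}$ and shifting down by one yields a surjection onto $\IEM{1}{n-1}$. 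The only cosmetic gap is that the paper's definition~\ref{defi:qsh_sh2} only defines $\Sh(k,l)$ and $\QSh(k,l)$ for $k,l\geq 1$, so your base case implicitly extends the definition to $l=0$ (or $k=0$) by declaring the set to be the singleton containing the identity; this convention should be stated explicitly but is harmless.
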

	In particular, the pieces of each product corresponds to a condition over the first letter of the words appearing in the shuffle: 
	\begin{Lemme}
		Let $(k,l)\in\N^2,w=w_1\dots w_k$ and $w'=w_{k+1}\dots w_{k+l}.$ A non-inductive formula for the products of definition~\ref{def:quasi_shuffle_words} is given below:
		\begin{gather*}
			w\prec w'\coloneqq\sum_{\substack{\sigma\in\QSh(k,l) \\ \sigma^{-1}(\{1\})=\{1\}}} w_{\sigma^{-1}(\{1\})}\dots w_{\sigma^{-1}(\{\max(\sigma)\})}, \\
			 w\succ w'\coloneqq\sum_{\substack{\sigma\in\QSh(k,l) \\ \sigma^{-1}(\{1\})=\{k+1\}}} w_{\sigma^{-1}(\{1\})}\dots w_{\sigma^{-1}(\{\max(\sigma)\})}, \\
			w\cdot w'\coloneqq\sum_{\substack{\sigma\in\QSh(k,l) \\ \sigma^{-1}(\{1\})=\{1,k+1\}}} w_{\sigma^{-1}(\{1\})}\dots w_{\sigma^{-1}(\{\max(\sigma)\})}.
		\end{gather*}
	\end{Lemme}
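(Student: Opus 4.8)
The plan is to derive these three non-inductive formulas from the non-inductive description of the full quasi-shuffle product established in the preceding lemma, by partitioning its indexing set $\QSh(k,l)$ according to the value of $\sigma^{-1}(\{1\})$. Throughout we may assume $k,l\ge 1$, since for empty words the left, right and middle products are undefined (as noted in the remark following definition~\ref{def:quasi_shuffle_words}).

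First I would establish the following trichotomy: for every $\sigma\in\QSh(k,l)$ the set $\sigma^{-1}(\{1\})$ is exactly one of $\{1\}$, $\{k+1\}$ or $\{1,k+1\}$. Indeed, surjectivity forces $1$ to lie in the image, so $\sigma^{-1}(\{1\})\neq\emptyset$. If some index $i$ with $1\le i\le k$ satisfies $\sigma(i)=1$, then since $\sigma(1)<\cdots<\sigma(k)$ and $1$ is the least possible value we must have $i=1$; likewise, if some index $j$ with $k+1\le j\le k+l$ satisfies $\sigma(j)=1$, then $j=k+1$. Hence $\sigma^{-1}(\{1\})\subseteq\{1,k+1\}$ and is nonempty, which yields the three cases. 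As these conditions are mutually exclusive and exhaustive, they partition $\QSh(k,l)$.

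Next I would match each block of this partition with the corresponding inductive product. For the block $\sigma^{-1}(\{1\})=\{1\}$, the first letter of the output word is $w_{\sigma^{-1}(\{1\})}=w_1$, and the assignment $\tau(m)\coloneqq\sigma(m+1)-1$ defines a bijection from this block onto $\QSh(k-1,l)$ realising the quasi-shuffle of $w_2\dots w_k$ with $w'$; applying the preceding lemma to this smaller product and prepending $w_1$ reproduces exactly $w\prec w'=w_1(w_2\dots w_k\qshuffle w')$. The block $\sigma^{-1}(\{1\})=\{k+1\}$ is treated symmetrically: deleting the index $k+1$ and decreasing the remaining values by one yields a bijection onto $\QSh(k,l-1)$, the first letter of the output is $w_{k+1}$, and one recovers $w\succ w'=w_{k+1}(w\qshuffle w_{k+2}\dots w_{k+l})$. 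Finally, for $\sigma^{-1}(\{1\})=\{1,k+1\}$ the first letter is $w_{\{1,k+1\}}=w_1+w_{k+1}$ and, after deleting both indices $1$ and $k+1$ and decreasing the remaining values by one, one obtains a bijection onto $\QSh(k-1,l-1)$ realising the quasi-shuffle of $w_2\dots w_k$ with $w_{k+2}\dots w_{k+l}$; this is precisely $w\cdot w'=(w_1+w_{k+1})(w_2\dots w_k\qshuffle w_{k+2}\dots w_{k+l})$.

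Since the three blocks partition $\QSh(k,l)$ and their sums reproduce $w\prec w'$, $w\succ w'$ and $w\cdot w'$ respectively, adding them back recovers the full formula of the preceding lemma, which confirms consistency. The only genuine bookkeeping — and thus the main (mild) obstacle — is checking that deleting the minimal index (or the two minimal indices) and shifting the remaining values down is a well-defined bijection between each block and the corresponding smaller set of quasi-shuffles; this amounts to verifying that the two monotonicity constraints and surjectivity are preserved under the reindexing, which is routine.
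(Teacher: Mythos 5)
Your proof is correct. The paper in fact states this lemma without proof, merely remarking that ``the pieces of each product correspond to a condition over the first letter''; your argument --- the trichotomy $\sigma^{-1}(\{1\})\in\bigl\{\{1\},\{k+1\},\{1,k+1\}\bigr\}$ forced by surjectivity and the two monotonicity constraints, followed by the deletion-and-shift bijections onto $\QSh(k-1,l)$, $\QSh(k,l-1)$ and $\QSh(k-1,l-1)$ that reduce each block to the non-inductive formula of the preceding lemma --- is exactly the justification the authors leave implicit, and it is complete as written.
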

	
	In the rest of the paper, we will often use the gradations of words by their lengths that we now recall.
	\begin{defi}
	 Let $\Omega$ be a set, and $w=w_1\dots w_k\in\calW_\Omega$ with $k\geq 1$. We set $l(w)\coloneqq k$ the \emph{length} of $w$. We also set $l(\emptyset)=0$. We write $\calW_{\Omega,k}$ the vector space generated by words of length $k$ written in the alphabet $\Omega$.
	\end{defi}

	In our applications, we will focus on the cases $\Omega=\{x,y\}$ and $\W$. Let us introduce some objects that are relevant in those situations.
	
	\begin{defi} \label{def:conv_words}
		We define $\Wcv$  the sub-vector space of $\W$ generated by the empty word and words whose first letters are an element of $\N^*$ different from $1$.
		
		Moreover, we introduce $\Wxycv$ the sub-vector space of $\Wxy$ such that its basis is given by the empty word and by elements that can be written as $x\Wxy y$.
	\end{defi}
		\begin{Rq}
			The structure $(\Wcv,\prec,\succ,\cdot)$ is a tridendriform subalgebra of $\W$. Moreover, $(\Wxycv,\prec,\succ)$ is a dendriform subalgebra of $\Wxy$.
		\end{Rq}
	
	\section{The tridendriform formal series algebra}\label{sec:tridend}
		
	\subsection{Formal series}
	
	In this subsection, the tool to introduce a tridendriform structure is formal series.
	Our construction will mainly involve the monoid of positive integers $(\N^*,+)$. But tridendriform structures can be usually defined over $\calW_{\Omega}$ for any monoid $(\Omega,\cdot)$.
		
	\begin{Not}
		We denote $(\N^*\rightarrow \R)^{\N^*}$ the set of maps from $\N^*$ to the set of functions from $\N^*$ to $\R$.
		It as a natural structure of vector space over $\R$. 
	\end{Not}
	
	\begin{defi}[Formal series over $\R$]\label{defi:formal_series}
		Consider the commutative monoid structure $(\N^*,+)$. Let $w\in\W$ and a map $F:\N^* \rightarrow (\N^* \rightarrow \R)$. We denote for any $n\in\N^*, f_n\coloneqq F(n)$. Then for $w=\omega_1\dots\omega_k\in\W$ let us set $F_{w}:(\N\setminus\{0\})^{k}\rightarrow \R$ the function defined by:
			\[
			F_w(n_1,\dots, n_k)\coloneqq \prod_{i=1}^{k} f_{w_i}(n_i).
			\]
			We call a \emph{formal series} a pair $(A \otimes w ,F)$ of the set:
			\begin{equation}\label{eq:formal_series_building}
			\left\langle \faktor{\R\mathcal{P}(\N^{\ell(w)})}{J_{\ell(w)}}\otimes w  \,\middle|\, w\in\W \right\rangle \times (\N^* \rightarrow \R)^{\N^*},
			\end{equation}
		where  $\calP(X)$ is the power set of $X$ and for all $n\in\N,~J_n$ is the ideal of the algebra $\left(\K\mathcal{P}(\N^n), +, \cup, 0 ,\ew{}\right)$ generated by:
		\[
		\left\{A\cup B -A-B+A\cap B \,\middle|\, A,B\in \mathcal{P}(\N^n)\right\}.
		\]
		A formal series $(A\otimes w,F)$ with $w=w_1\dots w_k$ will be written:
			\[
			\sum_A \left(\prod_{i=1}^k f_{w_i}\right).
			\] 
			The omission of the summation variable is here to distinguish formal series from usual ones. 
		We denote by $\FS_{\N}$ the set of formal series over $\N$.
	\end{defi}			
	
	\begin{Rq}
		The notation of formal series is made such that a specification of $F$ gives a concrete series to compute. Note that we are not evaluating it, we only consider it as a formal object.
	\end{Rq}
	\begin{Not}\label{Not:cart}
		For sake of simplicity, a formal series will be denoted $A\otimes w\times F$ instead of $(A\otimes w,F)$.
	\end{Not}
	The aim of this construction is to consider a formal object previous to any evaluation of \MZV{}.
\begin{Eg}
		In this section, we will mainly apply this construction to $F:k\rightarrow (n \rightarrow n^{-k})$. Hence, a formal version of the following \MZVs{} 
		\[
		\sum_{1 \leq n_2 < n_1} \frac{1}{n_1^2}\frac{1}{n_2}
		\]
		 is given by:
		 \[
		 \{1\leq n_2 < n_1\} \otimes \mathbf{2}~\mathbf{1}\times F.
		 \]
		 This is why we use the series notation.
	\end{Eg}
    Our formal series have some of the usual properties of series.
	\begin{Prop}\label{prop:formal_series_properties}
		Let $d\in\N^*$ and $F\in(\N^* \rightarrow \R)^{\N^*}$.
		The following properties of series hold for our formal series:
		\begin{itemize}
			\item let $A,B\in\mathcal{P}(\N^d)$ and $w=\omega_1\dots \omega_d\in \W$:
			\[
			\sum_{A\cup B} \left(\prod_{i=1}^d f_{\omega_i} \right)=\sum_A \left( \prod_{i=1}^d f_{\omega_i} \right)+\sum_B \left( \prod_{i=1}^d f_{\omega_i} \right)-\sum_{A\cap B}  \left(\prod_{i=1}^d f_{\omega_i}\right).
			\]
			\item let $w_1=\omega_1^{(1)}\dots \omega_1^{(d)},~w_2=\omega_2^{(1)}\dots \omega_2^{(d)}\in \W$ and $A\in\mathcal{P}(\N^d)$, then:
			\begin{align*}
				&\sum_A \left( \prod_{i=1}^d f_{\omega_1^{(i)}}+\prod_{j=1}^n f_{\omega_2^{(j)}} \right) 
				=\sum_A \left( \prod_{i=1}^d f_{\omega_1^{(i)}}\right)
				+\sum_A \left( \prod_{j=1}^d f_{\omega_2^{(j)}}\right).
			\end{align*}
		\end{itemize}
	\end{Prop}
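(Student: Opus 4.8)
The plan is to prove both identities by decoding the suggestive series notation $\sum_A\left(\prod_{i} f_{w_i}\right)$ back into the formal definition of a formal series as an element of the space appearing in~\eqref{eq:formal_series_building}, and then to observe that each identity is a one-line consequence of the linear structure living in one of the two tensor slots. The key point to keep in mind throughout is that the symbol $A$ in $\sum_A(\cdots)$ really denotes the class of $A$ in the quotient $\faktor{\R\mathcal{P}(\N^d)}{J_d}$, while the product $\prod_{i=1}^d f_{w_i}$ merely records the word $w=w_1\dots w_d$ in the $\W$-slot. So the first identity will be argued in the set-slot and the second in the word-slot.

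For the first identity I would work in the quotient $\faktor{\R\mathcal{P}(\N^d)}{J_d}$. By definition, $J_d$ is the ideal of $\left(\K\mathcal{P}(\N^d),+,\cup,0,\emptyset\right)$ generated by the elements $A\cup B-A-B+A\cap B$; in particular this very element is a generator, hence vanishes in the quotient. Therefore $[A\cup B]=[A]+[B]-[A\cap B]$ as classes. Tensoring this equality with the fixed word $w$ and pairing it with the fixed datum $F$, while using that for fixed $F$ the formal series form a vector space isomorphic to the first factor of~\eqref{eq:formal_series_building}, yields exactly the claimed equality. Only the vector-space structure of the quotient is used here; the fact that $J_d$ is a ring ideal, needed elsewhere to multiply formal series, plays no role.

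For the second identity I would instead argue in the word-slot. The two summands $\prod_{i=1}^d f_{\omega_1^{(i)}}$ and $\prod_{i=1}^d f_{\omega_2^{(i)}}$ encode the two words $w_1$ and $w_2$ of common length $d$, so the left-hand side is, by the definition of the notation, the formal series whose first component is $A\otimes(w_1+w_2)$, the sum being taken in $\W$. By bilinearity of the tensor product, namely linearity in the $\W$-slot, one has $A\otimes(w_1+w_2)=A\otimes w_1+A\otimes w_2$, and re-expanding the notation turns this into the right-hand side.

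Consequently the proof carries no genuine analytic or combinatorial obstacle: the only thing requiring care is the bookkeeping of which slot each property acts on. The first identity is nothing but the inclusion–exclusion relation built into the definition of $J_d$, and the second is plain bilinearity of $\otimes$; the single subtle point is that these are two different linearities, one in $\faktor{\R\mathcal{P}(\N^d)}{J_d}$ and one in $\W$, dressed up in the same series notation. The bulk of the write-up is therefore simply making the translation from the notation of Definition~\ref{defi:formal_series} explicit before invoking the relevant linear structure.
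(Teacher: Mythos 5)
Your proposal is correct and follows essentially the same route as the paper's proof: both identities are obtained by unwinding the series notation into the tensor form $A\otimes w\times F$ of Definition~\ref{defi:formal_series}, deriving the first from the relation $A\cup B-A-B+A\cap B$ imposed in the quotient by $J_d$ and the second from linearity in the $\W$-slot. Your remark distinguishing the two different linear structures at play is a helpful clarification, but the argument itself matches the paper's.
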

	\begin{proof}
		Let $d\in\N^*,w=\omega_1\dots \omega_d\in\W$ and $A,B\in\mathcal{P}(\N^d)$, hence by definition~\ref{defi:formal_series}:
		\begin{align*}
			\sum_{A\cup B} \left(\prod_{i=1}^d f_{\omega_i} \right)&=(A\cup B) \otimes w \times F \\
			&= A\otimes w \times F + B \otimes w \otimes F -A\cap B\otimes w\times F\\
			&= \sum_A \left( \prod_{i=1}^d f_{\omega_i} \right)+\sum_B \left( \prod_{i=1}^d f_{\omega_i} \right)-\sum_{A\cap B}  \left(\prod_{i=1}^d f_{\omega_i}\right).
		\end{align*}
		Finally, let us consider two words $w_1=\omega_1^{(1)}\dots \omega_1^{(d)},~w_2=\omega_2^{(1)}\dots \omega_2^{(d)}$ and $A\in\mathcal{P}(\N^d)$. Hence, by definition~\ref{defi:formal_series}:
		
		\begin{align*}
			\sum_A \left( \prod_{i=1}^d f_{\omega_1^{(i)}}+\prod_{j=1}^n f_{\omega_2^{(j)}} \right) &= A \otimes (w_1 + w_2) \times F \\
			&=A \otimes w_1 \times F + A \otimes w_2 \times F \\
			&=\sum_A \left( \prod_{i=1}^d f_{\omega_1^{(i)}}\right)
			+\sum_A \left( \prod_{j=1}^d f_{\omega_2^{(j)}}\right).
			\end{align*}
		So, it fulfils those usual properties of the series.
	\end{proof}
	
	\subsection{The tridendriform algebra of formal series}
	
	\subsubsection{Formal series for \MZVs{}}
	
	In this section, we focus on the map: 
	\begin{align} \label{eq:Our_F}
	 F:\left\lbrace\begin{array}{rcl}
	 	\N^* & \longrightarrow (\N^*\rightarrow\R), \\
	 	k & \longmapsto (n\mapsto n^{-k}).
	 \end{array} \right.
	\end{align}
		 We will omit the dependency on $F$ in this section to lighten the notation. Note that some of the results are still true with other appropriate $F$.
	
	\begin{defi} \label{def:formal_series_MZVs}
		We define $\Suum_{\N^*}$ the subspace of $\FS(\N^*)$ generated by:
		\[
		\bigcup_{r\geq 0}\bigcup_{\omega_1\dots \omega_r\in \W} \left\{ \sum_{1\leq n_r< \dots < n_1} \left(\prod_{i=1}^r f_{\omega_i}\right) \right\}.
		\]
		\end{defi}
		Since $F$ is fixed  $\Suum_{\N^*}$ inherits the vector space structure from the left part of equation~\eqref{eq:formal_series_building}.
		Hence, $\Suum_{\N^*}$ is the set of formal series needed to build \MZVs{} in their series version (equation~\eqref{def:MZVs}).
	
	It is well-known, from standard analysis tools, that a formal series $A\otimes\omega\times F\in\Suum_{\N^*}$ is actually equivalent to a convergent series provided the first letter of $\omega$ is not one. Then the following map will encode all the analysis job to get any real value.
	\begin{defi}[Evaluation map]\label{defi:ev_map_tri}
		 A formal series $A\otimes\omega\times F\in\Suum_{\N^*}$ is said to be \emph{convergent} if $\omega=\emptyset$ or if its first letter is not one. We write $\Suumcv$ these formal series. Then we define the \emph{evaluation map}, denoted $\ev$, as:
		\begin{equation*}
				\ev:\left\lbrace\begin{array}{rcl}
				\Suumcv & \rightarrow & \R, \\
				A \otimes \omega_1\dots \omega_k \times F & \mapsto & \displaystyle \sum_{(n_1,\dots,n_k)\in A}\left( \displaystyle\prod_{i=1}^k f_{\omega_i}(n_i) \right),
			\end{array} \right.
		\end{equation*}
		where $F$ is the function of equation~\eqref{eq:Our_F}.
	\end{defi}
	\begin{Rq}\label{Rq:not_linear}
	Notice that ev could a priori be defined for other domains than the ones of definition~\ref{def:formal_series_MZVs}. This extended map is a linear map for the first variable and is homogenous on the second variable. For instance, take for example $A=\{(2,2)\}$, $w=\mathbf{2}~\mathbf{2}$ with ${F(k)=(n\mapsto n^{-k})}$. Then $\ev(A\otimes w\times F)=\frac{1}{2^2}.\frac{1}{2^2}=\frac{1}{16}$. But it is not linear for the second argument, for instance let $\lambda\in\R, \lambda F(k)=(n\mapsto \lambda n^{-k})$, thus \[
	\ev(A\otimes w\times \lambda F)=\lambda\frac{1}{2^2}.\lambda \frac{1}{2^2}=\frac{1}{16}\lambda^2.
	\]
	More precisely, for $A\otimes w$ fixed it satisfies for any $\lambda\in\R$:
	\[
	\ev(A\otimes w \times \lambda F)=\lambda^{|w|}\ev(A\otimes w \times F).
	\]
	\end{Rq}

	\begin{Eg}\label{Eg:ev_tridend}
			More formally, to evaluate such a formal integral, one needs to get rid of the indeterminate forms. For instance, for $F$ such that for any $k\in\N^*, f_k:n\mapsto n^{-k}$, we have:
			\begin{align*}
				\frac{\pi^2}{6}=& \ev\left(\sum_{n\geq 1} \frac{1}{n^2}\right)
				=\ev(\{n\geq 1\} \otimes \mathbf{2}\times F) \\
				=&\ev\left(\sum_{n\geq 1} \frac{1-n}{n^2} +\sum_{n\geq 1} \frac{1}{n}\right)
				=\ev\left(\sum_{n\geq 1} \frac{1}{n^2}-\sum_{n\geq 1} \frac{1}{n} +\sum_{n\geq 1} \frac{1}{n}\right) \\
				=&\ev(\{n\geq 1\} \otimes \mathbf{2}\times F-\{n\geq 1\} \otimes \mathbf{1}\times F+\{n\geq 1\} \otimes \mathbf{1}\times F).
			\end{align*}
			Note that as $\sum_{n\geq 1} \frac{1-n}{n^2}$ and $\sum_{n\geq 1} \frac{1}{n}$ are not convergent formal series one can not apply the linearity property of $\ev$. Hence, it prevents us from writing $\infty - \infty$.
	\end{Eg}
	
\subsubsection{The tridendriform structure}
	
	Using an analogous of Chen's iterated integrals lemma~\cite{Chen1977} but for sums, we define the unique product $\qshuffle$ over $\Suum_{\N^*}$ such that $\ev$ is a morphism for the multiplicative structure of $\R$:
	\begin{defi}
		We define three new linear operators $\prec,\succ$ and $\cdot$ defined for any $\alpha,\beta\in\Suum_{\N}$ such that:
		\begin{align*}
			\alpha=\sum_{1\leq n_k< \dots < n_1} \prod_{i=1}^{k} f_{\omega_i}, &&
			\beta=\sum_{1\leq n_{k+l}< \dots < n_{k+1}} \prod_{i=k+1}^{k+l} f_{\omega_i},
		\end{align*}
		by:
		\allowdisplaybreaks[3]
		\begin{align*}
			\alpha\prec\beta&\coloneqq\sum_{\substack{\sigma\in\QSh(k,l) \\ \sigma^{-1}(\{1\})=\{1\}}}\{1\leq n_{\max(\sigma)}< \dots < n_{1} \}\otimes \omega_{\sigma^{-1}(\{1\})}\dots \omega_{\sigma^{-1}(\{\max(\sigma)\})} \times F, \\
			&=\sum_{\substack{\sigma\in\QSh(k,l) \\ \sigma^{-1}(\{1\})=\{1\}}}\sum_{1\leq n_{\max(\sigma)}< \dots< n_{1}} \prod_{i=1}^{\max(\sigma)} f_{\omega_{\sigma^{-1}(\{i\})}}, \\
			\alpha\succ \beta&\coloneqq\sum_{\substack{\sigma\in\QSh(k,l) \\ \sigma^{-1}(\{1\})=\{k+1\}}}\{1 \leq n_{\max(\sigma)}< \dots < n_{1} \}\otimes \omega_{\sigma^{-1}(\{1\})}\dots \omega_{\sigma^{-1}(\{\max(\sigma)\})} \times F, \\
			&=\sum_{\substack{\sigma\in\QSh(k,l) \\ \sigma^{-1}(\{1\})=\{k+1\}}}\sum_{1\leq n_{\max(\sigma)}< \dots< n_{1}} \prod_{i=1}^{\max(\sigma)} f_{\omega_{\sigma^{-1}(\{i\})}}, \\ \\
			\alpha\cdot\beta&\coloneqq\sum_{\substack{\sigma\in\QSh(k,l) \\ \sigma^{-1}(\{1\})=\{1,k+1\}}}\{1\leq n_{\max(\sigma)}< \dots < n_{1} \}\otimes \omega_{\sigma^{-1}(\{1\})}\dots \omega_{\sigma^{-1}(\{\max(\sigma)\})}\times F, \\
			&=\sum_{\substack{\sigma\in\QSh(k,l) \\ \sigma^{-1}(\{1\})=\{1,k+1\}}}\sum_{1\leq n_{\max(\sigma)}< \dots< n_{1}} \prod_{i=1}^{\max(\sigma)} f_{\omega_{\sigma^{-1}(\{i\})}}, \\
		\end{align*}
		\allowdisplaybreaks[0]
		where for any $(i,j)\in\IEM{1}{k}\times\IEM{k+1}{k+l}, \omega_{\{i,j\}}=\omega_i+\omega_j$.
		Moreover, we denote:
		\begin{align*}
			\alpha\qshuffle \beta &\coloneqq\alpha\prec\beta+\alpha\succ \beta+\alpha\cdot\beta \\
			&=\sum_{\sigma\in\QSh(k,l)} \sum_{1\leq n_{\max(\sigma)}< \dots\leq n_{1}} \prod_{i=1}^{\max(\sigma)} f_{\omega_{\sigma^{-1}(\{i\})}}.
		\end{align*}
	\end{defi}
		\begin{Prop}\label{Prop:tridend_Suum}
			The structure $(\Suum_{\N^*},\prec,\succ,\cdot)$ is a tridendriform algebra.
		\end{Prop}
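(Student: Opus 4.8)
The plan is to transfer the seven tridendriform axioms from the word algebra $\W$, which is already known to be an augmented tridendriform algebra, to $\Suum_{\N^*}$ by means of a surjective morphism. First I would introduce the linear map $\phi\colon\W\to\Suum_{\N^*}$ defined on the basis of words by $\phi(\omega_1\dots\omega_k)=\sum_{1\le n_k<\dots<n_1}\prod_{i=1}^k f_{\omega_i}$ and extended by linearity. Since the words form a basis of $\W$ there is no relation to verify for well-definedness, and by the very definition of $\Suum_{\N^*}$ (definition~\ref{def:formal_series_MZVs}) the image of $\phi$ spans $\Suum_{\N^*}$, so $\phi$ is surjective.

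The key step is to show that $\phi$ intertwines the three products, i.e. $\phi(u\prec v)=\phi(u)\prec\phi(v)$, and likewise for $\succ$ and $\cdot$. This is a direct comparison of definitions. For each $\sigma\in\QSh(k,l)$, the word component $\omega_{\sigma^{-1}(\{1\})}\dots\omega_{\sigma^{-1}(\{\max(\sigma)\})}$ of the corresponding term in $\phi(u)\prec\phi(v)$ is exactly the word occurring in the non-inductive formula for $u\prec v$ (where merged positions contribute the sums $\omega_i+\omega_j$), while its index-set component $\{1\le n_{\max(\sigma)}<\dots<n_1\}$ together with $F$ is precisely $\phi$ applied to that same word, because $\phi$ sends a word of length $m$ to the formal series summed over the simplex $\{1\le n_m<\dots<n_1\}$. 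The conditions singling out $\prec,\succ,\cdot$ — namely $\sigma^{-1}(\{1\})=\{1\}$, $\{k+1\}$ and $\{1,k+1\}$ — are identical on both sides, so the three identities hold termwise; summing them yields $\phi(u\qshuffle v)=\phi(u)\qshuffle\phi(v)$, with $*\coloneqq\prec+\succ+\cdot$ of definition~\ref{defi:tridend} coinciding with $\qshuffle$ in both algebras.

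Finally I would transfer the axioms. Since $\prec,\succ,\cdot$ are bilinear and each defining identity of definition~\ref{defi:tridend} is trilinear, it suffices to verify it on the spanning family $\{\phi(w)\mid w\in\W\}$. Writing $x=\phi(a)$, $y=\phi(b)$, $z=\phi(c)$, each axiom reduces via the morphism property just established to the corresponding axiom in $\W$; for instance $(x\prec y)\prec z=\phi\bigl((a\prec b)\prec c\bigr)=\phi\bigl(a\prec(b* c)\bigr)=x\prec(y* z)$, and similarly for the remaining six relations. As $\phi$ is surjective, every triple of elements of $\Suum_{\N^*}$ is of this form, so all seven identities hold.

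I expect the main obstacle to be the careful bookkeeping in the intertwining step: one must match, for each quasi-shuffle $\sigma$, both the letters of the output word (with the additive contributions $\omega_i+\omega_j$ at merged positions) and the shape of the index simplex, and confirm that the selection conditions on $\sigma^{-1}(\{1\})$ are exactly those defining the word products. Once this dictionary is in place the transfer of the axioms is purely formal. A self-contained alternative would be to verify the seven relations directly by manipulating the permutation sums, but this essentially re-proves the tridendriform property of $\W$ and is less economical.
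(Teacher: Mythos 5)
Your proposal is correct and follows essentially the same route as the paper: the authors also introduce the map $\Phi_F:\W\to\Suum_{\N^*}$ sending $\omega_1\dots\omega_k$ to $\{1\leq n_k<\dots<n_1\}\otimes\omega_1\dots\omega_k\times F$, observe that it is a (bijective, hence in particular surjective) morphism for $\prec$, $\succ$ and $\cdot$, and transfer the seven tridendriform relations from the word algebra. Your write-up merely makes explicit the intertwining bookkeeping that the paper leaves as "clearly a morphism".
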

	\begin{proof}
		Let $\alpha,\beta$ and $\gamma$ be three elements of $\Suum_{\N^*}$:
		\begin{align*}
			\alpha=\sum_{1\leq n_k< \dots < n_1} \prod_{i=1}^{k} f_{\omega_i}, &&
			\beta=\sum_{1\leq n_{k+l}< \dots < n_{k+1}} \prod_{i=k+1}^{k+l} f_{\omega_i}, &&
			\gamma=\sum_{1\leq n_{k+l+s}< \dots < n_{k+l+1}} \prod_{i=k+1}^{k+l} f_{\omega_i}.
		\end{align*}
		We need to show those seven relations:
		\begin{gather*}
			(\alpha\prec \beta)\prec \gamma =\alpha\prec(\beta\qshuffle\gamma),\\
			(\alpha\succ \beta)\prec \gamma =\alpha\succ(\beta\prec \gamma),\\
			(\alpha\qshuffle \beta)\succ \gamma =\alpha\succ(\beta\succ \gamma),\\
			(\alpha\succ \beta)\cdot \gamma =\alpha\succ(\beta\cdot \gamma), \\
			(\alpha\prec \beta)\cdot \gamma =\alpha\cdot(\beta\succ \gamma), \\
			(\alpha\cdot \beta)\prec \gamma =\alpha\cdot(\beta\prec \gamma), \\
			(\alpha\cdot \beta)\cdot \gamma =\alpha\cdot (\beta\cdot \gamma).
		\end{gather*}
		Note that applying a quasi-shuffle to those quantities is just equivalent to quasi-shuffle the words $\omega_1\dots \omega_k$,  $\omega_{k+1}\dots \omega_{k+l}$ and $\omega_{k+l+1}\dots \omega_{k+l+s}$. Actually, considering the map $F$ of equation~$\eqref{eq:Our_F}$ there is the following bijection:
		\[
		\Phi_F:\left\lbrace \begin{array}{rcl}
			\W & \rightarrow & \Suum_{\N^*}, \\
			\omega_1\dots \omega_k & \mapsto & \left\{ 1\leq n_k < \dots < n_1 \right\}\otimes \omega_1\dots \omega_k \times F
		\end{array}  \right.
		\]
		extended by linearity. 
		This map is clearly a morphism for the products $\prec$, $\succ$ and $\cdot$ in words and the same products in $\Suum_{\N}$. Thus the 
        relations hold thanks to this bijection.
	\end{proof}
	\begin{Prop}\label{lem:gen_tridend}
		 Then, $(\Suum_{\N^*},\prec,\succ,\cdot)$ is generated as a tridendriform algebra by the following set:
		\[
		\left\{  \sum_{{1\leq n}} f_{t}\right\}_{t\in\N}.
		\]
	\end{Prop}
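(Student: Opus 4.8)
The plan is to transport the statement to words through the tridendriform isomorphism already at hand. In the proof of proposition~\ref{Prop:tridend_Suum} we produced a linear bijection $\Phi_F:\W\to\Suum_{\N^*}$, sending $\omega_1\dots\omega_k$ to $\{1\leq n_k<\dots<n_1\}\otimes\omega_1\dots\omega_k\times F$, which is simultaneously a morphism for $\prec$, $\succ$ and $\cdot$; hence it is an isomorphism of tridendriform algebras. On a single letter $t$ it returns $\Phi_F(t)=\sum_{1\leq n}f_t$, so it maps the single-letter words bijectively onto the set proposed in the statement. Since an isomorphism of tridendriform algebras carries a generating set onto a generating set, it suffices to show that $(\W,\prec,\succ,\cdot)$ is generated, as a tridendriform algebra, by the single letters.

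The crux is the observation that left-multiplying by a single letter is nothing but concatenation. Indeed, for a one-letter word $a$ and any $w\in\W$, the inductive definition of $\prec$ in definition~\ref{def:quasi_shuffle_words} has an empty ``tail'' on the left factor, so it collapses to
\[
a\prec w=a\,(\ew\qshuffle w)=a\,w,
\]
using $\ew\qshuffle w=w$. I would record this identity first, as it is the only computation really needed.

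From there the argument is a straightforward induction on the length of a word: a length-one word is a generator (and the empty word is the unit, present by construction), while for $k\geq 2$ the identity above writes $w_1\dots w_k=w_1\prec(w_2\dots w_k)$, with $w_2\dots w_k$ already in the generated subalgebra by the induction hypothesis. Linearity then covers all of $\W$, and applying $\Phi_F$ transfers the conclusion to $\Suum_{\N^*}$. I do not expect any real obstacle: the single delicate point is justifying $a\prec w=a\,w$ through the unit convention $\ew\qshuffle w=w$, after which everything reduces to the isomorphism $\Phi_F$ and a one-line induction.
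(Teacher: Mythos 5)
Your proposal is correct and follows essentially the same route as the paper: both reduce the statement to the fact that $\W$ is generated by its letters as a tridendriform algebra and then transport this through the isomorphism $\Phi_F$. The only difference is that you spell out the generation of words by single letters (via the identity $a\prec w=a\,w$ and an induction on length), whereas the paper simply invokes it as a known fact.
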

	\begin{proof}
		This is a consequence of the fact that the set of words is generated by its letters as a tridendriform algebra and the existence of the isomorphism of tridendriform algebras $\Phi_F$ build above.
	\end{proof}
	Finally, the following well-known result states that the evaluation map of definition~\ref{defi:ev_map_tri}
	respects the associative algebra structure:
	\begin{Prop} \label{prop:ev_alg_morph}
	 The space $\Suumcv$ is a tridendriform subalgebra of $\Suum_{\N^*}$, and $\ev:\Suumcv\longrightarrow\R$ is an algebra morphism for the quasi-shuffle product $\qshuffle$.
	\end{Prop}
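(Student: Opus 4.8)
The plan is to prove the two assertions separately, in both cases transporting the problem to words via the isomorphism $\Phi_F$ constructed in the proof of Proposition~\ref{Prop:tridend_Suum}.

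For the subalgebra claim, I would first observe that $\Phi_F$ restricts to a bijection between $\Wcv$ and $\Suumcv$: a basis word of $\Wcv$ is either empty or has first letter at least $2$, and $\Phi_F$ sends it precisely to a formal series that is convergent in the sense of Definition~\ref{defi:ev_map_tri}, and conversely. Since $\Phi_F$ is an isomorphism of tridendriform algebras and $(\Wcv,\prec,\succ,\cdot)$ is a tridendriform subalgebra of $\W$ (Remark following Definition~\ref{def:conv_words}), its image $\Suumcv$ is a tridendriform subalgebra of $\Suum_{\N^*}$. One can also see the closure under $\qshuffle$ directly: the first letter of any word appearing in $u\qshuffle v$ is $u_1$, $v_1$ or $u_1+v_1$, all at least $2$ when $u,v$ are convergent, so no word starting with $1$ is ever produced.

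For the morphism claim, by linearity of $\ev$ in the set variable and of $\qshuffle$, it suffices to check $\ev(\alpha\qshuffle\beta)=\ev(\alpha)\,\ev(\beta)$ on generators $\alpha=\Phi_F(u)$, $\beta=\Phi_F(v)$ with $u=\omega_1\dots\omega_k$ and $v=\omega_{k+1}\dots\omega_{k+l}$ convergent. Here $\ev(\alpha)=\sum_{1\le n_k<\dots<n_1}\prod_{i=1}^k n_i^{-\omega_i}$ and similarly for $\beta$; both series converge absolutely because every $\omega_i\ge 2$. This absolute convergence is what licenses forming the product as a single iterated sum over the Cartesian product of the two index simplices:
\[
\ev(\alpha)\,\ev(\beta)=\sum_{\substack{1\le n_k<\dots<n_1\\ 1\le n_{k+l}<\dots<n_{k+1}}} \prod_{i=1}^{k+l} n_i^{-\omega_i}.
\]

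The final step is to partition this index set according to the relative order of the combined tuple $(n_1,\dots,n_{k+l})$, allowing an equality only between an index of the first block and one of the second. Each such merging pattern is encoded by a unique $\sigma\in\QSh(k,l)$, and on the corresponding piece the summand becomes $\prod_{i=1}^{\max(\sigma)} n_i^{-\omega_{\sigma^{-1}(\{i\})}}$, where a merged pair contributes the exponent $\omega_{\{i,j\}}=\omega_i+\omega_j$. Summing over all $\sigma$ reproduces exactly the definition of $\ev(\alpha\qshuffle\beta)$, giving the desired identity. I expect the main obstacle to be the justification of this rearrangement: it is the sum analogue of Chen's lemma, valid precisely because restricting to $\Suumcv$ forces absolute convergence of every series involved, and the bijection between merging patterns of two chains and $\QSh(k,l)$ is the combinatorial heart of the argument, the very one underlying the definition of $\qshuffle$.
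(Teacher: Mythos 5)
Your proof is correct and follows the same route as the paper: the subalgebra claim is settled by the identical observation that the first letter of every word produced by $\prec$, $\succ$, $\cdot$ is $u_1$, $v_1$ or $u_1+v_1$, hence never $1$; for the morphism claim the paper simply cites the standard analytic fact (it is a reformulation of the first equality in equation~\eqref{eq:shuffle_stuffle_zeta}), whereas you spell out that standard argument — absolute convergence, product of the two simplices, partition by $(k,l)$-quasi-shuffles. One small slip: you justify absolute convergence by asserting that ``every $\omega_i\geq 2$'', but convergence of a word in $\Wcv$ only constrains the \emph{first} letter (e.g.\ $\zeta(2,1)$ is convergent); the series nonetheless converge absolutely since all terms are positive, so the rearrangement you perform remains valid.
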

	\begin{proof}
	 If $w$ and $w'$ are two words that are either empty or do not start with $1$, then clearly so are $w\succ w',w\cdot w'$ and $w\prec w'$. Thus $\Suumcv$ is a tridendriform subalgebra. The fact that ev is an algebra morphism for the quasi-shuffle product is a well-known fact of the theory of \MZVs{} that can be proved using standard analytical techniques. It is essentially a reformulation of the first equality in equation~\eqref{eq:shuffle_stuffle_zeta}. 
	\end{proof}

	\subsection{Tridendriform structure and \MZVs{}}
	
    In this subsection, $F$ will still be the map of equation~\eqref{eq:Our_F}.
	\begin{defi}[\MZV{} series representation] \label{def:zeta_FS}
		We define the linear map ${\zeta_{\FS}:\Wcv \rightarrow \Suum_{\N^*}}$ defined for any word $w=\omega_1\dots \omega_k\in \W$ by:
		\[
		\zeta_{\FS}(w)=\sum_{0\leq n_k< \dots < n_1}\left( \prod_{i=1}^k f_{\omega_i}\right)=A\otimes w\times F
		\]
		with $\zeta_{\FS}(\emptyset)=\emptyset\otimes\emptyset \times F$ and with $A$ given by the domain in the definition of $\Suum_{\N^*}$
		(definition~\ref{def:formal_series_MZVs}).
		Hence, the classical \emph{\MZV{}} of definition~\ref{def:MZVs} satisfies:
		\[
		\zeta(w)=\ev\circ \zeta_{\FS}.
		\]
	\end{defi}

	Our motivation for introducing formal series as an intermediate step for building \MZVs{} comes from the following simple yet important proposition.
	\begin{Prop}\label{Lem:zeta_morph_tridend}
		The map $\zeta_{\FS}:\W\rightarrow \Suum_{\N}$ is a tridendriform morphism.
	\end{Prop}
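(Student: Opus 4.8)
The plan is to recognise that $\zeta_{\FS}$ is nothing but the linear bijection $\Phi_F$ already introduced in the proof of Proposition~\ref{Prop:tridend_Suum}, and that the three products on $\Suum_{\N^*}$ were defined precisely so as to transport the word products $\prec,\succ,\cdot$ of Definition~\ref{def:quasi_shuffle_words} along this bijection. Since being a tridendriform morphism means intertwining each of the three products and sending the unit to the unit, the statement follows once this identification is made explicit. First I would observe that, for $F$ the map of equation~\eqref{eq:Our_F} and any word $w=\omega_1\dots\omega_k\in\W$, the formal series $\zeta_{\FS}(w)=\{1\leq n_k<\dots<n_1\}\otimes\omega_1\dots\omega_k\times F$ coincides term by term with $\Phi_F(w)$; as both maps are defined by linear extension from words, they are equal as linear maps $\W\to\Suum_{\N^*}$.

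To check compatibility with the products, I would use the non-inductive, permutation-indexed formulas for $\prec,\succ,\cdot$ on words (stated in the lemma following Definition~\ref{defi:qsh_sh2}) together with the parallel permutation-indexed definitions of $\prec,\succ,\cdot$ on $\Suum_{\N^*}$. Fixing $w=\omega_1\dots\omega_k$ and $w'=\omega_{k+1}\dots\omega_{k+l}$, applying the linear map $\zeta_{\FS}$ to
\[
w\prec w'=\sum_{\substack{\sigma\in\QSh(k,l)\\\sigma^{-1}(\{1\})=\{1\}}}\omega_{\sigma^{-1}(\{1\})}\dots\omega_{\sigma^{-1}(\{\max(\sigma)\})}
\]
produces exactly the sum, over the same index set, of the formal series $\{1\leq n_{\max(\sigma)}<\dots<n_1\}\otimes\omega_{\sigma^{-1}(\{1\})}\dots\omega_{\sigma^{-1}(\{\max(\sigma)\})}\times F$, which is by definition $\zeta_{\FS}(w)\prec\zeta_{\FS}(w')$. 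The crucial point is that a letter merged by a quasi-shuffle $\sigma$ carries the label $\omega_{\{i,j\}}=\omega_i+\omega_j$, and $\zeta_{\FS}$ assigns to it the factor $f_{\omega_i+\omega_j}$, matching the convention $\omega_{\{i,j\}}=\omega_i+\omega_j$ used in the definition of the products on $\Suum_{\N^*}$. The identical computation under the conditions $\sigma^{-1}(\{1\})=\{k+1\}$ and $\sigma^{-1}(\{1\})=\{1,k+1\}$ settles $\succ$ and $\cdot$ respectively. Finally I would record that $\zeta_{\FS}(\emptyset)=\emptyset\otimes\emptyset\times F$ is the unit of the augmented algebra $\Suum_{\N^*}$, so the unit condition of a tridendriform morphism holds.

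I do not expect any genuine obstacle: the products on $\Suum_{\N^*}$ are engineered to mirror the word products through $\Phi_F$, so the entire content of the proposition is this identification. The only point requiring care is purely bookkeeping, namely checking that the summation domain $\{1\leq n_{\max(\sigma)}<\dots<n_1\}$ produced by $\zeta_{\FS}$ on each shuffled word agrees with the one appearing in the product definitions (which it does by construction), and keeping track of the additive labelling of the letters merged by the middle product.
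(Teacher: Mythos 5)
Your proposal is correct and follows essentially the same route as the paper: a direct verification, product by product, using the permutation-indexed formulas for $\prec,\succ,\cdot$ on words and the matching definitions on $\Suum_{\N^*}$ (the paper additionally shortcuts $\succ$ via $\prec=\succ^{\mathrm{op}}$, and your identification of $\zeta_{\FS}$ with the bijection $\Phi_F$ from the proof of proposition~\ref{Prop:tridend_Suum} is implicit there as well).
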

	\begin{proof}
		Using the fact that $\prec=\succ^{\textrm{op}}$, it is enough to prove that $\zeta_{\FS}$ is a morphism for $\prec$ and $\cdot$. Let $v,u\in (\N\setminus\{0\})^{\star}$ such that $v=\omega_1\dots \omega_k$ and $u=\omega_{k+1}\dots \omega_{k+l}$. Hence:
		\begin{align*}
			\zeta_{\FS{}}(v\prec u)&=\sum_{\substack{\sigma\in\QSh(k,l) \\ \sigma^{-1}(\{1\})=\{1\}}} \zeta_{\FS}\left( \omega_{\sigma^{-1}(\{1\})}\dots \omega_{\sigma^{-1}(\{\max(\sigma))\}} \right) \\
			&=\sum_{\substack{\sigma\in\QSh(k,l) \\ \sigma^{-1}(\{1\})=\{1\}}} \sum_{1\leq n_{\max(\sigma)}< \dots < n_{1}} \prod_{i=1}^k f_{\omega_{\sigma^{-1}(\{i\})}} \\
			&=\left( \sum_{1\leq n_k< \dots < n_1} \prod_{i=1}^k f_{\omega_i} \right)\prec \left( \sum_{1\leq n_{k+1}< \dots < n_{k+l}} \prod_{j=k+1}^{k+l} f_{\omega_j} \right) \\
			&=\zeta_{\FS}(v)\prec \zeta_{\FS}(u).
		\end{align*}
		The third equality comes from the definition of the product $\prec$ in $\Suum_{\N^*}$.
		
		We need to check that $\zeta_{\FS}$ is also a morphism for $\cdot$. With the same notations, we compute:
		\begin{align*}
			\zeta_{\FS{}}(v\cdot u)&=\sum_{\substack{\sigma\in\QSh(k,l) \\ \sigma^{-1}(\{1\})=\{1,k+1\}}} \zeta_{\FI}\left( \omega_{\sigma^{-1}(\{1\})}\dots \omega_{\sigma^{-1}(\{\max(\sigma)\})} \right) \\
			&=\sum_{\substack{\sigma\in\QSh(k,l) \\ \sigma^{-1}(\{1\})=\{1,k+1\}}} \sum_{1\leq n_{\max(\sigma)}< \dots < n_{1}} \prod_{i=1}^{\max(\sigma)} f_{\omega_{\sigma^{-1}(\{i\})}} \\
			&=\left( \sum_{1\leq n_k< \dots < n_1} \prod_{i=1}^k f_{\omega_i} \right)\cdot \left( \sum_{1\leq n_{k+l}< \dots < n_{k+1}} \prod_{j=k+1}^{k+l} f_{\omega_j} \right) \\
			&=\zeta_{\FS}(v)\cdot \zeta_{\FS}(u).
		\end{align*} 
		This achieves the proof.
	\end{proof}
	This proposition indicates that the well-known property that \MZVs{} form an algebra morphism for the quasi-shuffle product comes from a deeper tridendriform structure. This structure is lost when one uses the evaluation map to obtain real numbers since $\R$ has no tridendriform structure.
	
	In the following, using the universal properties of the free tridendriform algebra over \Sch{} trees and binary trees, we extend the definition of the formal \MZVs{} we have just introduced.

\section{\MZVs{} and Tridendriform structures} \label{sec:three}

\subsection{Schroeder trees}

	Let us quickly introduce the combinatorial view of the free tridendriform algebra. Its definition over one generator can be found in the first author's article~\cite{Catoire_23}. In order to describe the free tridendriform algebra with several generators, we introduce Schroeder trees.
	\begin{defi}\label{defi:schroeder}
	Recall that a \emph{planar rooted tree} is a planar connected directed acyclic graph such that there is at most one path between any two vertices, and such that there is a unique minimal element for the relation on its set of vertices
	\begin{equation*}
 	v\leq v':\Longleftrightarrow \text{there is path from $v$ to $v'$}.
	\end{equation*}

	For any tree $t$, a \emph{leaf} of $t$ is a vertex that is maximal for the poset relation $\leq$. We say one of its vertex is \emph{internal} if it is not a leaf.
	
	So for any tree, the minimal element is called the \emph{root} of the tree and its edges are oriented from the root to the leave.
	
	A \emph{Schroeder} tree is a planar rooted tree such that any of its internal vertices has at least two children. We write $\Schtree$ the set of Schroeder trees.
	
	Let $t\in \Schtree$ a Schroeder tree. Let $\nu(t)$ be
	 the set of internal vertices of~$t$. An \emph{angle of $v$} is a pair $(e,e')$ of  consecutive (for the reading order from left to right) edges starting from $v$.
	For any $v\in\nu(t)$, we define $\angles(v)$ the set of angles of $v$ in $t$.
	
	We will denote $\angles(t)\coloneqq\displaystyle\bigcup_{v\in\nu(t)} \angles(v)$.
	
	Let $t$ be a Schroeder tree and $\Omega$ a set. An \emph{angle decoration map} for $t$ is a map ${D:\angles(t)\rightarrow \Omega}$.
A \emph{$\Omega$-decorated Schroeder tree} is a pair $(t,D)$ with $t$ a Schroeder tree and $D$ an angle decoration map for $t$. We write $\Schtree(\Omega)$ the set of $\Omega$-decorated Schroeder trees.
	\end{defi}
\begin{Rq}
 In general, we omit to write down the decoration map of a decorated Schroeder tree, and instead write the decorations on the angles. Alternatively, if $|\angles(v)|=n$, we can write $v$ as being decorated by the word $\omega_1\cdots\omega_n$ instead of $\omega_i$ being the $i$th decoration of the $i$th angle of $v$ (read from left to right).
\end{Rq}

	We give below some examples of decorated and undecorated Schroeder trees and refer the reader to~\cite{zhang2020free} for a detailed presentation.
\begin{Eg}
	Here are some examples of such trees where $\Schtree_n$ is the set of Schroeder trees with exactly $n+1$ leaves.
		\begin{align*}
			\Schtree_0=\left\lbrace |\right\rbrace, && \Schtree_1=\left\lbrace\labY{}\right\rbrace, && \Schtree_2=\left\lbrace 
			\labbalais{}{},
			\labbalaisg{}{} ,
			\labbalaisd{}{}
			\right\rbrace.
		\end{align*}
        Setting $T_n(\Omega)$ the set of $\Omega$-decorated Schroeder trees with exactly $n+1$ leaves we have
        \begin{align*}
			\Schtree_0(\Omega)=\left\lbrace |\right\rbrace, && \Schtree_1(\Omega)=\left\lbrace\labY{$\omega$\,}\middle|\,\omega\in\Omega\right\rbrace, && \Schtree_2(\Omega)=\left\lbrace 
			\labbalais{$\omega$}{$\alpha$},
			\labbalaisg{$\omega$}{$\alpha$} ,
			\labbalaisd{$\omega$}{$\alpha$}\,\middle|\,(\omega,\alpha)\in\Omega^2
			\right\rbrace.
		\end{align*}
\end{Eg}

\subsection{Free tridendriform algebras}

The free tridendriform algebra over $\N^*$ has for underlined vector space the space $\Schtree(\N^*)$.

	Let $t,s$ be two trees different from $|$. We see $t$ as a right comb and $s$ as a left comb. In other words, we put:
	\begin{center}
		\begin{align*}
			t&=\peignedroitdec{F_1}{F_2}{F_k}{w_1}{w_2}{w_k}
			& \text{ and } &&  s=\peignegauchedec{F_{k+1}}{F_{k+2}}{F_{k+l}}{w_{k+1}}{w_{k+2}}{w_{k+l}},
		\end{align*}
	\end{center}
	where for all $i\in\IEM{1}{k+l}, F_i$ is a forest with decorated angles and the $w_i$ is the word of integer labels decorating the angles between the elements of the forest $F_i$. Here, $k$ represents the number of nodes on the rightmost branch of $t$ and $l$ is the number of nodes on the leftmost branch of $s$.
	
	\begin{defi}\label{def:quasiaction}
		Let $t$ and $s$ be two decorated trees whose respective right comb and left comb representations are given as above. Put $k$ (respectively $l$) the number of nodes on the rightmost branch (respectively the leftmost)  of $t$ (respectively of $s$). Let $\sigma\in\QSh(k,l)$ whose image is $\IEM{1}{n}$. We define $\sigma(t,s)$ the tree obtained the following way:

		\begin{enumerate}
			\item  let us start from the tree:
			\begin{center}
				\begin{tikzpicture}[line cap=round,line join=round,>=triangle 45,x=0.3cm,y=0.3cm]
					\begin{scope}{shift={(-3,0)}}
						\draw (0,0)--(0,2.5);
						\draw[dashed] (0,2.5)--(0,5);
						\draw (0,5)--(0,6);
						\filldraw [black] (0,1) circle (2pt) node[anchor=west]{Node 1};
						\filldraw [black] (0,2) circle (2pt) node[anchor=west]{Node 2};
						\filldraw [black] (0,5) circle (2pt) node[anchor=west]{Node n};
					\end{scope}
				\end{tikzpicture}
			\end{center}
			\item For $i\in\IEM{1}{k},$ we graft $F_i$ as the \emph{left} son of the node $\sigma(i)$;
			\item For $i\in\IEM{k+1}{k+l},$ we graft $F_i$ as a \emph{right} son of the node $\sigma(i)$;
			
			\item For the decorations, we proceed the following way :
			\begin{itemize}
				\item 	all the forests $F_i$ keep their original decorations;
				\item 	Let $m\in\IEM{1}{n}$. The node $m$ of the ladder is decorated by $w_i$ if $i$  is the unique preimage of $m$ by $\sigma$. Otherwise, there exists a unique  $(i,j)\in\IEM{1}{k}\times\IEM{k+1}{k+l}$ such that $m=\sigma(i)=\sigma(j)$, then we decorate  the node $m$ of the ladder with $w_i w_j.$
			\end{itemize}
		\end{enumerate}
	\end{defi}
	\begin{Eg}\label{Eg:quasi_action}
		Consider $\sigma=(1,3,2,3)$ a $(2,2)$-quasi-shuffle.
		Take
		$t=$	\raisebox{-0.3\height}{\begin{tikzpicture}[line cap=round,line join=round,>=triangle 45,x=0.3cm,y=0.3cm]
				% Etage 1
				\draw (0,0)--(0,1);
				%Etage 2
				\draw (0,1)--(-1,2) node[left]{$F_1$};
				% Etage 3
				\draw (0,1)--(2,3);
				\draw (1,2)--(0,3) node[left,above]{$F_2$};
				%Décorations
				\draw (0,1) node[right]{\scriptsize{$w_1$}};
				\draw (1,2) node[right]{\scriptsize{$w_2$}};
		\end{tikzpicture}} and ${s=\raisebox{-0.3\height}{\begin{tikzpicture}[line cap=round,line join=round,>=triangle 45,x=0.3cm,y=0.3cm]
					% Etage 1
					\draw (0,0)--(0,1);
					%Etage 2
					\draw (0,1)--(1,2) node[right]{$F_{3}$};
					% Etage 3
					\draw (0,1)--(-2,3);
					\draw (-1,2)--(0,3) node[right,above]{$F_{4}$};
					%Décorations
					\draw (0,1) node[left]{\scriptsize{$w_3$}};
					\draw (-1,2) node[left]{\scriptsize{$w_4$}};
			\end{tikzpicture}}}$.
		Then:
		\begin{center}
			$\sigma(t,s)=$\raisebox{-0.5\height}{\begin{tikzpicture}[line cap=round,line join=round,>=triangle 45,x=0.5cm,y=0.5cm]
					\draw (0,0)--(0,4);
					%Noeud 1
					\draw (0,1)--(-1.5,2) node[left]{$F_1$};
					%Noeud 2
					\draw (0,2)--(1,3) node[right]{$F_3$};
					%Noeud 3
					\draw (0,3)--(-1,4) node[left]{$F_2$};
					\draw (0,3)--(1,4) node[right]{$F_4$};
					%Décorations
					\draw (0,1) node[right]{\scriptsize{$w_1$}};
					\draw (0,2) node[left]{\scriptsize{$w_3$}};
					\draw (0,3) node[below,left]{\scriptsize{$w_2 w_4$}};
			\end{tikzpicture} }.
		\end{center}
	\end{Eg}
	An explicit, non-inductive description of the tridendriform structure of Schroeder trees was given in~\cite{Catoire_23}.
\begin{thm}[\cite{Catoire_23}]\label{thm:produit}
	Let $t,s$ be two trees different from $|$ as described above. Then for any set $\Omega$, $\Schtree(\Omega)$ has the structure of a tridendriform algebra given by: 
	\[
	t*s=\sum_{\sigma\in \QSh(k,l)}\sigma(t,s).
	\]
	Moreover:
	\begin{align*}
		t\prec s=\sum_{\substack{\sigma\in\QSh(k,l) \\ \sigma^{-1}(\{1\})=\{1\}}} \sigma(t,s), &&
		t\cdot s=\sum_{\substack{\sigma\in\QSh(k,l) \\ \sigma^{-1}(\{1\})=\{1,k+1\}}} \sigma(t,s), &&
		t\succ s=\sum_{\substack{\sigma\in\QSh(k,l) \\ \sigma^{-1}(\{1\})=\{k+1\}}} \sigma(t,s).
	\end{align*}
\end{thm}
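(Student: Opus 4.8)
The plan is to verify the seven defining relations of Definition~\ref{defi:tridend} directly from the combinatorial description of $\sigma(t,s)$, organising the computation around a single ``ternary'' quasi-shuffle. Throughout I would work with a decorated Schroeder tree through its right-comb/left-comb decomposition, so that the whole structure is controlled by the word of decorations read along a distinguished spine together with the forests hanging off it. The products $\prec,\succ,\cdot$ only differ through the condition imposed on the fibre $\sigma^{-1}(\{1\})$ over the bottom spine node, so the proof amounts to matching these conditions on the two sides of each relation once a common indexing of both sides is available.

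The technical heart is an associativity lemma for the grafting operation. Given three trees $t,s,r$ with spine-lengths $k,l,m$, I would construct a bijection between the index set describing a two-step product (first shuffle two spines, then reshuffle the resulting spine with the third) and a suitable set of ternary quasi-shuffles $\sigma:\IEM{1}{k+l+m}\twoheadrightarrow\IEM{1}{n}$ of the three blocks $\IEM{1}{k}$, $\IEM{k+1}{k+l}$ and $\IEM{k+l+1}{k+l+m}$, and check that the two trees produced coincide with matching decorations: forests are carried along unchanged, and the word label on a merged spine node is the concatenation of the labels of its preimages in left-block, middle-block, right-block order. Granting this, each of the seven relations becomes the assertion that two sub-sums indexed by ternary quasi-shuffles agree, the only distinguishing datum being whether $\sigma^{-1}(\{1\})$ is a singleton in one block (yielding $\prec$ or $\succ$ according to whether the corresponding graft is a left or a right son) or a doubleton across two blocks (yielding $\cdot$). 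The case analysis on $\sigma^{-1}(\{1\})$ then closes each relation exactly as for words.

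To make this last step precise I would compare with words: forgetting the hanging forests, the spine of $\sigma(t,s)$ is decorated by the quasi-shuffle of the two spine-words, taken in the word algebra over the free monoid $\Omega^{\star}$ with concatenation, a setting covered by Definition~\ref{def:quasi_shuffle_words}. Thus the spine realises the word-level quasi-shuffle, and the extra data (the forests and the side on which each is grafted) is transported functorially. This lets me import the seven identities from the word case and upgrade them by keeping track of forests and grafting sides separately.

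The step I expect to be the main obstacle is precisely the associativity lemma, and within it the planar bookkeeping. Unlike words, the construction $\sigma(t,s)$ is asymmetric in its two arguments (forests of $t$ become left sons, those of $s$ right sons), so iterating the product forces one to re-present the intermediate tree through its own comb decomposition; one must then verify that the spine of the intermediate tree is exactly the shuffled spine, that no forest is moved across it, and that the two association orders produce the same planar tree with decorations concatenated in the correct order on each merged node. Once this coherence is pinned down, the remainder is the routine partition argument on $\sigma^{-1}(\{1\})$ sketched above. For a single generator this is the content of~\cite{Catoire_23}, and the decorated case follows by carrying the angle decorations through verbatim.
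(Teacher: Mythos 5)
The paper does not prove this statement: Theorem~\ref{thm:produit} is recalled verbatim from~\cite{Catoire_23}, so there is no in-paper argument to compare yours against. Judged on its own terms, your plan has the right high-level shape (reduce each of the seven relations to a partition of a set of ternary quasi-shuffles according to the fibre $\sigma^{-1}(\{1\})$), but it rests entirely on the ``associativity lemma'' that you announce and do not prove, and the one concrete claim you make about how that lemma should go is false. You assert that after forming $\sigma(t,s)$ one should find that ``the spine of the intermediate tree is exactly the shuffled spine,'' i.e.\ that the ladder of Definition~\ref{def:quasiaction} is the rightmost branch of $\sigma(t,s)$. It is not. In the right-comb reading, the rightmost branch of $\sigma(t,s)$ follows the ladder only up to the first node carrying a right-grafted forest --- that is, up to node $\sigma(k+1)$ --- and then leaves the ladder and descends into the rightmost tree of $F_{k+1}$. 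One can check this already on Example~\ref{Eg:quasi_action}: there the rightmost branch of $\sigma(t,s)$ consists of ladder nodes $1$ and $2$ and then enters $F_3$, and never reaches ladder node $3$. Consequently the ``two-step product equals ternary quasi-shuffle'' bijection cannot be set up by simply reshuffling the ladder with the third spine: the comb decomposition of the intermediate tree mixes ladder nodes with nodes coming from the forests of $s$, and your bookkeeping of which block each spine node belongs to breaks down exactly where the proof has to do its work.

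The standard repair is to abandon the direct non-inductive bijection and instead use the inductive description of the three products (Lemma~\ref{Lem:induction_product}): writing $t=t^{(1)}\vee_{\omega_1}\dots\vee_{\omega_{k-1}}t^{(k)}$ and $s=s^{(1)}\vee_{\mu_1}\dots\vee_{\mu_{l-1}}s^{(l)}$, each of the seven relations reduces, after one application of the grafting formulas, to an instance of the same relations (or of associativity of $*$) on trees with strictly fewer leaves, and one closes the argument by a simultaneous induction on the total number of leaves --- this is the Loday--Ronco-style argument and is what~\cite{Catoire_23} carries out. Your reduction of the decorated case to the one-generator case by ``carrying the angle decorations through verbatim'' is fine, and your observation that forgetting the forests recovers the word-level quasi-shuffle on the spine decorations is a useful sanity check; but as written the proposal defers the entire technical content to a lemma whose stated form would not survive contact with the planar structure.
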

The three products $\prec,\succ $ and $\cdot$ also admit an inductive description that we now present.
	\begin{Not}
		Let $k\in \N, k\geq 2$ and $t_1,\dots, t_k$ a family of $k$ trees. We denote by ${t_1\vee_{\omega_1}  \dots \vee_{\omega_{k-1}} t_k}$ the tree obtained grafting in this order $t_1, \dots, t_k$ to a common root decorated by $\omega_1\dots \omega_{k-1}.$ 
	\end{Not}
	\begin{Eg}
		With this notation, all trees can be represented only with Y-shaped trees and the tree |:
		\[
		\labY{1} \vee_2 | = \labbalaisg{2}{1}.
		\]
	\end{Eg}
	\begin{Lemme}\label{Lem:induction_product}
		Let $t$ and $s$ be two \Sch{} trees different from $|$ with:
		\begin{align*}
			t=t^{(1)}\vee_{\omega_1} \dots \vee_{\omega_{k-1}} t^{(k)}, &&
			s=s^{(1)} \vee_{\mu_1} \dots \vee_{\mu_{l-1}} s^{(l)}.
		\end{align*}
		Then:
		\begin{gather*}
			t\prec s=t^{(1)}\vee_{\omega_1} \dots \vee_{\omega_{k-1}} \left(t^{(k)}*s\right), 
			t \succ s= \left(t * s^{(1)}\right)\vee_{\mu_1} s^{(2)}\vee_{\mu_2} \dots \vee_{\mu_{l-1}} s^{(l)},\\
			t\cdot s= t^{(1)}\vee_{\omega_1} \dots \vee_{\omega_{k-1}} \left(t^{(k)}* s^{(1)}\right)\vee_{\mu_1} s^{(2)}\vee_{\mu_2} \dots \vee_{\mu_{l-1}} s^{(l)}.
		\end{gather*}
	\end{Lemme}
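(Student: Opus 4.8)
The plan is to read everything off from the explicit description of $\prec,\succ,\cdot$ in Theorem~\ref{thm:produit}, sorting the quasi-shuffles according to which indices are sent to the bottom node of the ladder of Definition~\ref{def:quasiaction}. The one point requiring care at the outset is that Theorem~\ref{thm:produit} is phrased in \emph{comb coordinates}: it uses the number of nodes on the rightmost branch of $t$ and on the leftmost branch of $s$. To avoid a clash with the present statement, where $k$ and $l$ are the \emph{root degrees} (the numbers of children of the roots of $t$ and $s$), I would write $p$ and $q$ for the two comb lengths. The bridge between the two pictures is elementary: the forest $F_1$ grafted on the left of the bottom comb node of $t$ is exactly $(t^{(1)},\dots,t^{(k-1)})$, its angle word $w_1$ is $\omega_1\cdots\omega_{k-1}$, and the subtree hanging on the rightmost edge (the right comb of $t$ with its bottom node deleted) is precisely the rightmost child $t^{(k)}$, whose comb length is $p-1$. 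Symmetrically, $F_{p+1}$ is the right forest $(s^{(2)},\dots,s^{(l)})$ of the root of $s$, with angle word $v_1=\mu_1\cdots\mu_{l-1}$, and the left comb of $s$ with its bottom node deleted is the leftmost child $s^{(1)}$, of comb length $q-1$.

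For $\prec$ I would start from $t\prec s=\sum_{\sigma\in\QSh(p,q),\ \sigma^{-1}(\{1\})=\{1\}}\sigma(t,s)$. Since $\sigma$ is strictly increasing on $\{1,\dots,p\}$ and on $\{p+1,\dots,p+q\}$, the fibre $\sigma^{-1}(\{1\})$ is always contained in $\{1,p+1\}$; the defining condition $\sigma^{-1}(\{1\})=\{1\}$ thus says the bottom node of the ladder receives only $F_1$ on its left and nothing on its right. By Definition~\ref{def:quasiaction}, in every such $\sigma(t,s)$ the root, its left children $t^{(1)},\dots,t^{(k-1)}$ and the decoration $\omega_1\cdots\omega_{k-1}$ are those of $t$, while the rightmost subtree is $\sigma'(t^{(k)},s)$, where $\sigma'$ is the restriction of $\sigma$ to $\{2,\dots,p\}\cup\{p+1,\dots,p+q\}$ after shifting all values down by one. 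The map $\sigma\mapsto\sigma'$ is a bijection onto $\QSh(p-1,q)$, so summing and reapplying Theorem~\ref{thm:produit} gives $\sum_{\sigma'}\sigma'(t^{(k)},s)=t^{(k)}*s$ on the rightmost branch, whence $t\prec s=t^{(1)}\vee_{\omega_1}\cdots\vee_{\omega_{k-1}}\bigl(t^{(k)}*s\bigr)$.

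The case $\succ$ is the mirror image: the condition $\sigma^{-1}(\{1\})=\{p+1\}$ places only $F_{p+1}$ at the bottom node on the right, so the root of each $\sigma(t,s)$ is the root of $s$, its right children $s^{(2)},\dots,s^{(l)}$ and decorations $\mu_1\cdots\mu_{l-1}$ are preserved, and the leftmost subtree is $\sigma'(t,s^{(1)})$; summing over the resulting bijection with $\QSh(p,q-1)$ yields $t*s^{(1)}$ there and gives the stated formula. For $\cdot$ the condition is $\sigma^{-1}(\{1\})=\{1,p+1\}$, so the bottom node simultaneously carries $F_1$ on the left and $F_{p+1}$ on the right, its angle word being $w_1\,v_1=\omega_1\cdots\omega_{k-1}\,\mu_1\cdots\mu_{l-1}$; the intermediate subtree is $\sigma'(t^{(k)},s^{(1)})$, the bijection is now onto $\QSh(p-1,q-1)$, and one obtains $t^{(k)}*s^{(1)}$ between the two preserved fans, i.e.\ the third identity. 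The only genuinely delicate part is this bookkeeping across the two coordinate systems: one must check that deleting the bottom comb node really returns $t^{(k)}$ (resp.\ $s^{(1)}$), that the value-shift sends the surviving quasi-shuffle to a bona fide element of $\QSh(p-1,q)$ (resp.\ $\QSh(p,q-1)$, $\QSh(p-1,q-1)$), and that the concatenation $w_1v_1$ matches the decoration prescribed in Definition~\ref{def:quasiaction}. None of these is hard, but all three must be verified to make the passage between comb and root coordinates airtight.
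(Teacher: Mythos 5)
The paper states Lemma~\ref{Lem:induction_product} without proof (it is presented as recalled material accompanying the non-inductive description of Theorem~\ref{thm:produit}, imported from~\cite{Catoire_23}), so there is no internal argument to compare yours against. Your derivation is correct and is the natural one: the fibre $\sigma^{-1}(\{1\})$ of a $(p,q)$-quasi-shuffle is indeed always a nonempty subset of $\{1,p+1\}$, the three defining conditions of $\prec$, $\succ$, $\cdot$ exactly prescribe what is grafted at the bottom ladder node (and hence what the root of $\sigma(t,s)$ looks like), and the value-shift bijections onto $\QSh(p-1,q)$, $\QSh(p,q-1)$, $\QSh(p-1,q-1)$ reassemble the remaining sum into $t^{(k)}*s$, $t*s^{(1)}$, $t^{(k)}*s^{(1)}$ respectively. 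Your translation between root coordinates ($k,l$ children) and comb coordinates ($p,q$ comb nodes) is also handled correctly, including the matching of the angle words $w_1=\omega_1\cdots\omega_{k-1}$ and $w_{p+1}=\mu_1\cdots\mu_{l-1}$ with the decoration rule of Definition~\ref{def:quasiaction}. The only point you leave implicit is the degenerate case $t^{(k)}=|$ (resp.\ $s^{(1)}=|$), where $p=1$ (resp.\ $q=1$) and the target $\QSh(0,q)$ is not literally covered by Definition~\ref{defi:qsh_sh2}; there the stated formulas still hold because $|$ is the unit of the augmented algebra, so $|*s=s$ by equation~\eqref{eq:augm}, and it would be worth one sentence to say so.
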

	This lemma allows us to do proofs by induction over the number of leaves of a tree.
	
	\subsection{Tridendriform zeta values}
	
	The tridendriform algebra of theorem~\ref{thm:produit} is the free tridendriform algebra over $\Omega$.
	\begin{thm}[\cite{zhang2020free}] \label{thm:univ_prop} 
	Let $\Omega$ be a set. $(\Schtree(\Omega),\prec,\succ,.)$ is the free tridendriform algebra over $\Omega$: for any tridendriform algebra $A$ and any map $f:\Omega\longrightarrow A$, there exists a unique morphism of tridendriform algebras $\Phi:\Schtree(\Omega)\longrightarrow A$ such that the diagram below commutes, with ${i:\Omega\longrightarrow \Schtree(\Omega)}$ the canonical embedding defined by $i(n)\coloneqq\labY{n}$
\begin{figure}[ht]
	\centering 
	\begin{tikzcd}
	\Omega \arrow[rd, "f"] \arrow[r, "i"] & \Schtree(\Omega) \arrow[d, "\Phi"] \\
	& A
	\end{tikzcd}.
\end{figure}
\end{thm}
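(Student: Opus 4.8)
The plan is to prove the two halves of the universal property separately: uniqueness of $\Phi$, which is soft, and existence, which carries all the combinatorial content. Throughout I work in the augmented algebras, so that the unit $|$ and the augmentation rules~\eqref{eq:augm} make the partial products behave well.

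For \emph{uniqueness}, I would first record that $\Schtree(\Omega)$ is generated, as a tridendriform algebra, by the corollas $\{i(\omega)\}_{\omega\in\Omega}$. This is the exact tree analog of the generation statement for $\Suum_{\N^*}$ in Proposition~\ref{lem:gen_tridend}, and it follows from Lemma~\ref{Lem:induction_product}: any tree with at least two internal vertices is a product ($\prec$, $\succ$ or $\cdot$) of two trees each having strictly fewer internal vertices, while a one-vertex corolla decorated by $\omega_1\cdots\omega_m$ equals the iterated middle product $i(\omega_1)\cdot\cdots\cdot i(\omega_m)$ — an easy computation from Lemma~\ref{Lem:induction_product} using $|*|=|$, unambiguous because $\cdot$ is associative by~\eqref{eq:tri7}. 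A descending induction on the number of internal vertices then expresses every tree as a tridendriform word in the $i(\omega)$, so any tridendriform morphism is determined by its values on the $i(\omega)$; hence two morphisms making the triangle commute coincide.

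For \emph{existence}, I would construct $\Phi$ by induction on the number of internal vertices. Set $\Phi(|)=1$ and $\Phi(i(\omega))=f(\omega)$. For a tree $t$ with $\ge 2$ internal vertices I would fix once and for all a \emph{canonical} decomposition $t=a\,\square\,b$ into smaller trees, read off its root decomposition $t=t^{(1)}\vee_{\omega_1}\cdots\vee_{\omega_{k-1}}t^{(k)}$ via the reductions of Lemma~\ref{Lem:induction_product} (peeling the rightmost branch through $\prec$, the leftmost through $\succ$, the root corolla through $\cdot$), and set $\Phi(t):=\Phi(a)\,\square\,\Phi(b)$. The delicate point is that this recursion rests on a \emph{choice} of decomposition, so $\Phi$ is a priori only well defined relative to that choice; the content of the theorem is that the resulting map is genuinely a tridendriform morphism for \emph{all} decompositions.

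Thus the crux of the proof, and the step I expect to be the main obstacle, is to verify $\Phi(u\,\square\,v)=\Phi(u)\,\square\,\Phi(v)$ for each $\square\in\{\prec,\succ,\cdot\}$ and arbitrary $u,v$. I would do this by a double induction on the total number of internal vertices of $u$ and $v$: expand $u\,\square\,v$ using the inductive product formulas of Lemma~\ref{Lem:induction_product}, apply the induction hypothesis to the strictly smaller trees produced, and re-collect the result using precisely the seven tridendriform relations~\eqref{eq:tri1}--\eqref{eq:tri7} holding in $A$. The bookkeeping is substantial because the products interact (for instance $\prec$ feeds a full $*=\prec+\succ+\cdot$ into the rightmost branch), so each case invokes several axioms at once; this is exactly where the hypothesis that $A$ is tridendriform — no more, no less — is used, forcing the relations among trees to be exactly the tridendriform ones. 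An alternative, more structural route would identify $\Schtree(\Omega)$ with the free tridendriform algebra built from the tridendriform operad, whose arity-$n$ component has the Schroeder trees as a basis, and deduce the universal property from the operadic construction; but the hands-on induction above keeps the argument self-contained and in line with the inductive tools already set up in the paper.
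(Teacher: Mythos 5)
First, a point of comparison: the paper does not prove this statement. Theorem~\ref{thm:univ_prop} is imported from~\cite{zhang2020free} and used as a black box, so there is no in-paper argument to measure your proposal against; it has to stand on its own. Structurally it is the right plan: uniqueness from the fact that the corollas $i(\omega)$ generate $\Schtree(\Omega)$ as a tridendriform algebra, existence by a recursive definition of $\Phi$ followed by a verification that it is a morphism. The uniqueness half is essentially complete, modulo one slip: a tree with at least two internal vertices is \emph{not} always a product of two trees each with strictly fewer internal vertices. For $t=|\vee_{\omega_1}s\vee_{\omega_2}|$ with $s\neq|$, the only single-tree factorisations are $(|\vee_{\omega_1}s)\cdot i(\omega_2)$ and $i(\omega_1)\cdot(s\vee_{\omega_2}|)$, and in each the large factor has exactly as many internal vertices as $t$. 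The induction should instead run on the number of leaves (equivalently of angles), which is the statistic lemma~\ref{Lem:induction_product} is explicitly designed for; with that change the generation argument, and hence uniqueness, goes through.

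The genuine gap is in the existence half, and you have located it yourself without closing it. Defining $\Phi$ through one canonical decomposition of each tree is fine, but the whole content of freeness is the claim that the resulting map satisfies $\Phi(u\,\square\,v)=\Phi(u)\,\square\,\Phi(v)$ for \emph{all} $u,v$ and all three operations --- equivalently, that the only relations the corollas satisfy inside $\Schtree(\Omega)$ are consequences of the seven axioms~\eqref{eq:tri1}--\eqref{eq:tri7}. This is not a verification one can gesture at: by theorem~\ref{thm:produit} the product $u\,\square\,v$ is a sum over quasi-shuffles of trees, each summand must be re-expanded through its own canonical decomposition, and the pieces must be reassembled in $A$ using only the tridendriform relations; alternatively one must run the operadic/dimension-count argument identifying Schroeder trees with $n+1$ leaves as a basis of the arity-$n$ component of the free object. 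Either route is the actual theorem of~\cite{zhang2020free}, and your write-up describes where that work would go rather than doing it. As it stands the proposal is a correct and well-organised outline whose central step is asserted, not proved.
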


\begin{Not}
 We will write $\Tridend(\Omega)$ the free tridendriform algebra over $\Omega$ described above.
\end{Not}
	Applying the freeness property of $\Tridend(\N^*)$ to the tridendriform algebra $\calS_{\N^*}$ (definition\ref{def:formal_series_MZVs}), we obtain tridendriform zeta values.
	\begin{defi} \label{def:triden_zeta}
	 Let $\sum:\N^*\longrightarrow\calS_{\N^*}$ be the map defined by $\sum(n)\coloneqq\sum_{k\geq1} f_n$, where ${f_n(m)\coloneqq m^{-n}}$ as in definition~\ref{def:formal_series_MZVs}. Then the \emph{tridendriform zeta values} map is the unique morphism of tridendriform algebras $\zeta^{\rm Tri}:\Tridend(\N^*)\longrightarrow\calS_{\N^*}$ whose existence and unicity is given by the universal property of $\Tridend(\N^*)$ stated in theorem~\ref{thm:produit}.
	\end{defi}
	Note that the map that we built is by definition a morphism of tridendriform algebras. Therefore, it is also a morphism for the quasi-shuffle product $*$ of Schroeder trees which is explicitly described in theorem~\ref{thm:produit}:
	\begin{equation} \label{eq:tridend_zeta_morphism}
	 \forall(t,s)\in\Tridend(\N^*),~\zeta^{\rm Tri}(t*s)=\zeta^{\rm Tri}(t)*\zeta^{\rm Tri}(s).
	\end{equation}
	Composing asuitably restricted $\zeta^{\rm Tri}$ with the evaluation map of definition~\ref{defi:ev_map_tri} gives thanks to proposition~\ref{prop:ev_alg_morph} a map with values in $\R$ which is an algebra morphism. The aforementioned restriction needs to be taken to avoid divergent series, and this will be expanded upon later on.
	In other words, we have build a new generalisation of \MZVs{} which is an algebra morphism for a generalisation of the quasi-shuffle product on words.
	
	In the rest of this section, we will aim at relating $\zeta^{\rm Tri}$ to other known objects, and in particular Arborified Zeta Values and \MZVs{}. This will also solve the issue of convergence of Tridendriform Zeta Values, which we have left pending so far.

	\section{From tridendriform to Zeta Values}\label{sec:tridend_to_usual}
	
\subsection{From the free algebra to usual trees}

As we have seen the free vector space of Schroeder trees with angles decorated by non-negative integers has a tridendriform structure explained above making it the \emph{free tridendriform algebra} over $\N^*$ denoted $\Tridend(\N^*)$. Our aim is to recover classical trees for the \MZVs{} theory.

	Now, we introduce the main objects that we will care about:
	\begin{defi}
		We define $\Tree(\N^*)$ the vector space generated by Schroeder trees whose internal vertices are decorated by an element of $\N^*$ \emph{greater than the number of {angles of this vertex}}.

		Hence, one can see an element of $\Tree(\N^*)$ as the data $(t,d)$ of a \Sch{} tree $t$ and a decoration map $d:\nu(t)\rightarrow \N^*.$
	\end{defi}
	\begin{Eg}
		For instance:
		$\labY{1}, \labY{3},  \labbalaisd{1}{1}, \labbalaisbis{2} \text{ and }\labbalaisg{2}{1}$ are elements of $\Tree(\N^*)$ whereas $\labbalaisbis{1}$ is not.
	\end{Eg}	
	\begin{defi} \label{defi:iota}
		We define the map $\iota:\Tridend(\N^*) \rightarrow \Tree(\N^*)$ keeping the same tree structure but getting from the decoration of any angles a decoration of internal vertices by:
		\[
		\forall v\in\nu(t), d(v)=\sum_{a\in \angles(v)} D(a).
		\]
		In other words, $\iota(t,D)=(t,d)$.
	\end{defi}
	\begin{Eg}
		For instance:
		\[
		\iota\left(\labbalais{1}{2}\right)=\labbalaisbis{3}.
		\]
	\end{Eg}
	
	Consider $t$ and $s$ be two elements of $\Tree(\N^*)$:
	\begin{center}
		\begin{align}\label{eq:tree_shapes}
			t&=\peignedroitdec{F_1}{F_2}{F_k}{d_1}{d_2}{d_k}
			& \text{ and } &&  s=\peignegauchedec{F_{k+1}}{F_{k+2}}{F_{k+l}}{d_{k+1}}{d_{k+2}}{d_{k+l}},
		\end{align}
	\end{center}
	where for all $i\in\IEM{1}{k+l}, F_i$ is a decorated forest of elements of $\Tree(\N^*)$ and the $d_i$'s are  vertex decorations in $\N^*$. Then, $k$ represents the number of nodes on the rightmost branch of $t$ and $l$ is the number of nodes on the leftmost branch of $s$.
	
	Moreover, quasi-shuffles act on the set of pairs of elements of $\Tree(\N^*)$ by:
	\begin{defi}
		Let $t,s\in\Tree(\N^*)$ and $\sigma\in\QSh(k,l)$ where $k$ (respectively $l$) is the number of forests in the right (respectively left) comb representation of $t$ (respectively of $s$). Put ${n=\max(\sigma)}$. We define $\sigma(t,s)$ the element of $\Tree(\N^*)$ defined by:
		\begin{enumerate}
			\item the underlying tree is the same built in definition~\ref{def:quasiaction};
			\item the decoration function is unchanged for any forests expect for the vertices $v_i$ for $i\in\IEM{1}{n}$ of the ladder used to build the new tree in definition~\ref{def:quasiaction}:
			\[
			d(v_i)\coloneqq \begin{cases}
				d_t(v_j) &\text{if } \sigma^{-1}(\{i\})=\{j\} \text{ and } v_j\in\nu(t),\\
				d_s(v_j) &\text{if } \sigma^{-1}(\{i\})=\{j\} \text{ and } v_j\in\nu(s),\\
				d_t(v_{j_1})+d_s(v_{j_2}) &\text{if } \sigma^{-1}(\{i\})=\{j_1,j_2\} \text{ and } v_{j_1}\in\nu(t), v_{j_2}\in\nu(s),\\
			\end{cases}
			\]
			where $d_t$ and $d_s$ are respectively the decorations maps of $t$ and $s$.
		\end{enumerate}
	\end{defi}

	\begin{Eg}
		Let $\sigma=(1,3,2,3)$ be $(2,2)$-quasi-shuffle. 
		
		Take $t=$	\raisebox{-0.3\height}{\begin{tikzpicture}[line cap=round,line join=round,>=triangle 45,x=0.3cm,y=0.3cm]
				% Etage 1
				\draw (0,0)--(0,1);
				%Etage 2
				\draw (0,1)--(-1,2) node[left]{$F_1$};
				% Etage 3
				\draw (0,1)--(2,3);
				\draw (1,2)--(0,3) node[left,above]{$F_2$};
				%Décorations
				\draw (0,1) node[right]{\scriptsize{$d_1$}};
				\draw (1,2) node[right]{\scriptsize{$d_2$}};
		\end{tikzpicture}} and $s=$ \raisebox{-0.3\height}{\begin{tikzpicture}[line cap=round,line join=round,>=triangle 45,x=0.3cm,y=0.3cm]
				% Etage 1
				\draw (0,0)--(0,1);
				%Etage 2
				\draw (0,1)--(1,2) node[right]{$F_{3}$};
				% Etage 3
				\draw (0,1)--(-2,3);
				\draw (-1,2)--(0,3) node[right,above]{$F_{4}$};
				%Décorations
				\draw (0,1) node[left]{\scriptsize{$d_3$}};
				\draw (-1,2) node[left]{\scriptsize{$d_4$}};
		\end{tikzpicture}} two elements of $\Tree(\N^*)$.
		Then:
		\begin{center}
			$\sigma(t,s)=$\raisebox{-0.5\height}{\begin{tikzpicture}[line cap=round,line join=round,>=triangle 45,x=0.5cm,y=0.5cm]
					\draw (0,0)--(0,4);
					%Noeud 1
					\draw (0,1)--(-1.5,2) node[left]{$F_1$};
					%Noeud 2
					\draw (0,2)--(1,3) node[right]{$F_3$};
					%Noeud 3
					\draw (0,3)--(-1,4) node[left]{$F_2$};
					\draw (0,3)--(1,4) node[right]{$F_4$};
					%Décorations
					\draw (0,1) node[right]{\scriptsize{$d_1$}};
					\draw (0,2) node[left]{\scriptsize{$d_3$}};
					\draw (0,3) node[below,left]{\scriptsize{$d_2+ d_4$}};
			\end{tikzpicture} }.
		\end{center}
	\end{Eg}
	With this construction we get:
	\begin{Lemme} \label{lem:iota_morphism}
		Let $t$ and $s$ having the same combs representations as equation~\eqref{eq:tree_shapes}.
		The map $\iota$ is surjective and for any $\sigma\in\QSh(k,l)$ and $t,s\in\Tridend(\N^*)$:
		\[
		\iota(\sigma(t,s))=\sigma(\iota(t),\iota(s)).
		\]
	\end{Lemme}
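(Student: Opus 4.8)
The plan is to treat the two assertions separately. For surjectivity, I would argue by an explicit construction. Given a vertex-decorated Schroeder tree $(t,d)\in\Tree(\N^*)$, the defining constraint is exactly $d(v)\geq|\angles(v)|$ at every internal vertex $v$ (reading the example $\labbalaisbis{2}$ versus $\labbalaisbis{1}$ as saying that ``greater than'' here means $\geq$). Since an angle decoration $D$ must take values in $\N^*$, the least total it can produce at $v$ is $|\angles(v)|$ (all angles set to $1$), and this minimum can be exceeded by any prescribed amount. Concretely, I would set $D$ to be $1$ on every angle of $v$ except one distinguished angle, to which I assign $d(v)-|\angles(v)|+1\geq 1$; then $\sum_{a\in\angles(v)}D(a)=d(v)$, so $\iota(t,D)=(t,d)$. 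This shows $\iota$ is surjective.

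For the identity $\iota(\sigma(t,s))=\sigma(\iota(t),\iota(s))$, I would work with $t,s$ given in the right/left comb form of Definition~\ref{def:quasiaction}, with word decorations $w_1,\dots,w_k$ on the spine of $t$ and $w_{k+1},\dots,w_{k+l}$ on the spine of $s$ (the general case then follows by bilinearity). The first step is to observe that the two constructions produce literally the same underlying planar Schroeder tree: this tree is built in Definition~\ref{def:quasiaction} and is explicitly reused in the definition of the $\Tree(\N^*)$-action, while $\iota$ leaves the tree shape untouched. Hence only the vertex decorations remain to be compared, and this can be done vertex by vertex.

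The heart of the matter is the behaviour of $\iota$ on the ladder vertices $v_1,\dots,v_n$, where $n=\max(\sigma)$, since the forests $F_i$ keep their decorations verbatim on both sides and $\iota$ acts identically inside them. The key algebraic fact is that $\iota$ sends the word decorating a vertex to the sum of its letters, and therefore sends concatenation of words to addition of these sums. I would then split into the two cases of the decoration rule. If $\sigma^{-1}(\{i\})=\{j\}$, the tridendriform action decorates $v_i$ by the single word $w_j$, so applying $\iota$ sums its letters and recovers $d_t(v_j)$ (or $d_s(v_j)$), matching the corresponding branch of the $\Tree(\N^*)$-rule. If $\sigma^{-1}(\{i\})=\{j_1,j_2\}$ with $j_1\leq k<j_2$, the tridendriform action decorates $v_i$ by the concatenation $w_{j_1}w_{j_2}$, so $\iota$ assigns the value $d_t(v_{j_1})+d_s(v_{j_2})$, which is exactly the merged-vertex value of the $\Tree(\N^*)$-action. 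Thus all vertex decorations coincide and the identity holds.

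The step I expect to be the main obstacle is not any computation but the bookkeeping: aligning the angle/word decorations of the tridendriform side with the single-integer vertex decorations of the $\Tree(\N^*)$ side, and checking that the correspondence between spine vertices induced by $\sigma$ is the same for both actions. Once the \emph{concatenation-to-sum} principle is isolated, the remaining verification is a routine case check, so I would devote most of the write-up to making the indexing of the $v_i$ and of their preimages under $\sigma$ unambiguous.
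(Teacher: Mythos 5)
Your proposal is correct and follows essentially the same route as the paper: surjectivity via an explicit composition of $d(v)$ into $|\angles(v)|$ positive parts (the paper picks an arbitrary such partition, you pick a specific one), and the equivariance identity via the observation that both actions produce the same underlying tree while $\iota$ turns concatenation of angle-words into addition of vertex labels. Your write-up is in fact more detailed than the paper's two-line argument, but no new idea is involved.
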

	\begin{proof}
			Let $t\in\Tree(\N^*)$.
			By definition of $\Tree(\N^*)$ the decoration on any $v\in\nu(t)$ is greater than its number of angles. We define $t'$ an element of $\Tridend(\N^*)$ with the same tree structure such that for any $v\in\nu(t)$ the angles of $v$ are decorated from left to right by a partition of the decoration of $v$ of length the number of its angles. Hence, $\iota(t')=t$ so $\iota$ is surjective.
			
			By construction of the action, $\iota$ sends the concatenation of the decorations of angles in $\Tridend(\N^*)$ to the sum of decorations in $\Tree(\N^*)$. Thus, it fulfils the equation required to achieve the proof of the lemma.
	\end{proof}
	As a consequence from this lemma, we deduce that $\Tree(\N^*)$ has a tridendriform structure inherited from the one of $\Tridend(\N^*)$.
	\begin{Prop} \label{prop:tree_tridend}
     For any trees $(t,s)\in\Tree(\N^*)^2$, let us define the products
	 \begin{align*}
		t\prec s\coloneqq \sum_{\substack{\sigma\in\QSh(k,l), \\ \sigma^{-1}(\{1\})=\{1\}}} \sigma(t,s), && t\succ s\coloneqq \sum_{\substack{\sigma\in\QSh(k,l), \\ \sigma^{-1}(\{k+1\})=\{k+1\}}} \sigma(t,s), && t\cdot s\coloneqq \sum_{\substack{\sigma\in\QSh(k,l), \\ \sigma^{-1}(\{1,k+1\})=\{1,k+1\}}} \sigma(t,s).
	\end{align*}
	Then $(\Tree(\N^*),\prec,\succ,.)$ is a tridendriform algebra and $\iota$ is a morphism of tridendriform algebras.
	We will denote the sum of these three products by $*$.
	\end{Prop}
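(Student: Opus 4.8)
The plan is to transfer the tridendriform structure from $\Tridend(\N^*)$ to $\Tree(\N^*)$ through the surjection $\iota$, in exactly the way the image of an algebra inherits its relations. The engine is Lemma~\ref{lem:iota_morphism}, which supplies both the surjectivity of $\iota$ and the intertwining relation $\iota(\sigma(t,s))=\sigma(\iota(t),\iota(s))$ for every $\sigma\in\QSh(k,l)$. No new combinatorial analysis of the $\sigma$-action itself is needed beyond this lemma.

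First I would check that $\iota$ is a morphism for each of the three products separately. The products $\prec,\succ,\cdot$ on $\Tree(\N^*)$ are defined by summing $\sigma(t,s)$ over the same three families of quasi-shuffles appearing in Theorem~\ref{thm:produit}, namely the partition of $\QSh(k,l)$ according to whether the minimal value is attained at position $1$ only, at position $k+1$ only, or at both. Hence linearity of $\iota$ combined with the intertwining relation of Lemma~\ref{lem:iota_morphism} gives, for any $\tilde t,\tilde s\in\Tridend(\N^*)$,
\[
\iota(\tilde t\prec\tilde s)=\sum_{\sigma}\iota(\sigma(\tilde t,\tilde s))=\sum_{\sigma}\sigma(\iota(\tilde t),\iota(\tilde s))=\iota(\tilde t)\prec\iota(\tilde s),
\]
the sum ranging over the $\prec$-family $\{\sigma:\sigma^{-1}(\{1\})=\{1\}\}$, and identically for $\succ$, $\cdot$, hence for $*$. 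This step simultaneously yields well-definedness of the three products on $\Tree(\N^*)$: each $\sigma(t,s)$ lies in $\Tree(\N^*)$ because it is an $\iota$-image of an element of $\Tridend(\N^*)$.

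Next I would deduce the seven tridendriform axioms on $\Tree(\N^*)$ by lifting. Given $t,s,u\in\Tree(\N^*)$, surjectivity of $\iota$ provides lifts $\tilde t,\tilde s,\tilde u\in\Tridend(\N^*)$. Each axiom holds in $\Tridend(\N^*)$ by Theorem~\ref{thm:produit}; applying $\iota$ and pushing it through the products via the previous step transports it verbatim. For example, from $(\tilde t\prec\tilde s)\prec\tilde u=\tilde t\prec(\tilde s*\tilde u)$ one obtains
\[
(t\prec s)\prec u=\iota\bigl((\tilde t\prec\tilde s)\prec\tilde u\bigr)=\iota\bigl(\tilde t\prec(\tilde s*\tilde u)\bigr)=t\prec(s*u),
\]
and the six remaining relations follow the same pattern. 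This shows $(\Tree(\N^*),\prec,\succ,\cdot)$ is a tridendriform algebra, and the computation of the second paragraph shows $\iota$ is a morphism of tridendriform algebras.

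The only genuinely delicate point, which I would verify with care, is the compatibility of the index families defining $\prec,\succ,\cdot$ on $\Tree(\N^*)$ with those of Theorem~\ref{thm:produit}: one must confirm that the three summation conditions really do partition $\QSh(k,l)$ and coincide with the corresponding conditions on $\Tridend(\N^*)$, so that the displayed computation of the second paragraph is legitimate. Once this matching is settled, everything else is a formal consequence of surjectivity and the intertwining relation of Lemma~\ref{lem:iota_morphism}.
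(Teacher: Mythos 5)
Your proposal is correct and follows essentially the same route as the paper's own proof: first establishing that $\iota$ intertwines each of the three products via linearity and Lemma~\ref{lem:iota_morphism}, then transporting the seven tridendriform axioms from $\Tridend(\N^*)$ to $\Tree(\N^*)$ by lifting elements through the surjection. The ``delicate point'' you flag about matching the summation conditions with those of Theorem~\ref{thm:produit} is worth noting (the conditions on $\succ$ and $\cdot$ in the proposition's statement appear to contain typos relative to the theorem), but it does not affect the validity of the argument.
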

	\begin{proof}
	Let us first show that $\iota$ is an algebra morphism for each of the products $\prec$, $\succ$ and $\cdot$. For any trees $(t,s)\in\Tridend(\N^*)^2$ we have
	\begin{align*}
	 \iota(s\prec t) & =\iota\left(\sum_{\substack{\sigma\in\QSh(k,l), \\ \sigma^{-1}(\{1\})}=\{1\}} \sigma(t,s)\right) \tag*{by definition of $\prec$} \\
	 & = \sum_{\substack{\sigma\in\QSh(k,l), \\ \sigma^{-1}(\{1\})=\{1\}}}\iota\left( \sigma(t,s)\right) \tag*{by linearity of $\iota$} \\
	 & = \sum_{\substack{\sigma\in\QSh(k,l), \\ \sigma^{-1}(\{1\})}=\{1\}}\sigma(\iota(t),\iota(s))\tag*{by lemma~\ref{lem:iota_morphism}} \\
	 & = \iota(t)\prec\iota(s)
	\end{align*}
	by definition of $\prec$ in $\Tree(\N^*)$. We show similarly that $\iota(s\succ t) = \iota(t)\succ\iota(s)$ and $\iota(s\cdot t) = \iota(t)\cdot\iota(s)$.

	     We can now show that the seven equalities of definition~\ref{defi:tridend} hold for the products of the proposition. Take $(r,s,t)\in\Tree(\N^*)^3$, let us show that $(r\prec s)\prec t=r\prec(s* t)$. From the surjectivity of $\iota$, there exist $(r',s',t')\in\Tridend(\N^*)^3$ such that $\iota(r')=r$, $\iota(s')=s$, and $\iota(t')=t$. Then
	     \begin{align*}
	      (r\prec s)\prec t & =  (\iota(r')\prec \iota(s'))\prec \iota(t') \\
	      & = \iota\left((r'\prec s')\prec t' \right) \tag*{by the previous point} \\
	      & = \iota\left(r'\prec (s'* t') \right) \tag*{since $\Tridend(\N^*)$ is tridendriform} \\
	      & = r\prec (s* t)
	     \end{align*}
	     again since $\iota$ is a morphism for the $\prec$ and $\cdot$ products. The other six equalities are shown in the same way.
	     
	     Then, by lemma~\ref{lem:iota_morphism}, $\iota$ is a morphism of tridendriform algebras.
	    \end{proof}
	    Let us make some computations in the tridendriform structure of $\Tree(\N^*)$:
	    \begin{Eg} \label{ex:prod_Tree}
	    Working with low decorations we find
	    \begin{equation*}
	     \labYY{2}{1}{1}\prec \labY{2} = \labYI{2}{1}{1}{2} + \labIY{2}{2}{1}{1}+\labIpY{2}{3}{1}.
	    \end{equation*}
	    For the other products we have
	    \begin{equation*}
	     \labYY{2}{1}{1}\succ \labY{2} = \labYgI{2}{2}{1}{1},\qquad \labYY{2}{1}{1}\cdot \labY{2} = \labYYI{4}{1}{1}.
	    \end{equation*}
	    \end{Eg}
\begin{Rq}
	By the freeness property of $\Tridend(\N^*)$, it shows that $\iota$ is the unique tridendriform morphism such that the diagram~\ref{diag:iota_map} commutes with $i_1$ and $i_2$ are the natural injections.
	\begin{figure}[h]
		\centering
		\begin{tikzcd}
			\Tridend(\N^*) \arrow[r,two heads, "\iota"] & \Tree(\N^*) \\
			& \N^* \arrow[u, hook,"i_1"] \arrow[lu, hook, "i_2"]
		\end{tikzcd}
		\caption{The $\iota$ map}
		\label{diag:iota_map}
	\end{figure}
	Here, $i_2$ is the map $i$ of theorem~\ref{thm:univ_prop}.
\end{Rq}
Notice that the properties of $\Tree(\N^*)$ can be generalised to $\Tree(\Omega)$ for any set $\Omega$ with a commutative monoid structure. We will not need this level of generality in this paper and we omit these statements, whose proofs can be copied mutatis mutandis for those wrote above.

	\subsection{\AZVs{}}

In the literature~\cite{Manchon_16,Ya20}, we can find a definition of an arborification of the \MZV{}. We introduce a similar concept adapted to the trees we are considering:
\begin{defi}[Arborification of the integral \MZVs{}]\label{def:azvs}
	We define the following linear map $\zeta^T:\Treecv \rightarrow  \R$ such that for any tree $t$ of $\Treecv$, we define:
	\[
	\zeta^T(t)=\sum_{\boldsymbol{k}\in D_t} \prod_{v\in\nu(t)} \frac{1}{k_v^{n_v}},
	\]
	where $D_t$ is the set defined by:
	\[
	\left\{ (k_v)_{v\in\nu(t)}\in\N^{|\nu(t)|} \,\middle|\, \forall (v,w)\in \nu(t)^2 , k_v<k_w \text{ if } w\leq v  \right\},
	\] 
	where for any $(u,v)\in\nu(t)^2, u\leq v$ reading the tree as a Hasse diagram where the root is the minimum of the poset. Here, $\Treecv$ is the subset of $\Tree(\N^*)$ where the series is convergent, and will be specified below.
\end{defi}
Notice that in~\cite{Manchon_16,Ya20,clavier2020double}, these \AZVs{} are defined on non-planar, not necessarily Schroeder, forests. The requirement that internal vertices have at least two direct descendants does not restraint our results since one can simply add spurious leaves related to internal vertices without changing the values of the associated arborified zeta value. Working with planar trees does not change the series, nor their properties as it can be readily checked.
\begin{Eg}For instance:
	\[
	\zeta^T\left(\alabelledtree{2}{2}{1}\right)= \sum_{\substack{0< k_2 < k_1 \\ 0< k_3 <k_1}} \frac{1}{k_1^2}\frac{1}{k_2^2}\frac{1}{k_3}.
	\]
\end{Eg}
\begin{Not}\label{Not:grafting_B}
	Let $k\in\N^*$ and $t_1,\dots,t_k$ be $k$ trees of $\Tree(\N^*)$. Let $n\in\N^*$. We denote the tree with root decorated by $n$ which has $t_1$ to $t_k$ for sons from right to left with:
	\[
	\B^+_n(t_1,\dots,t_k).
	\]
\end{Not}
Moreover, there is also the known map:
\begin{defi}\label{defi:flat_map}
	We define the map $\flaten:\Tree(\N^*)\rightarrow \W$ by $\flaten(|)=\emptyset$ and for any ${\B^+_n(t_1,\dots,t_k)=t}$ with $n\in\N^*$ by:
	\[
	\flaten(t)=n\cdot(\flaten(t_1)\qshuffle\dots \qshuffle \flaten(t_k)).
	\]
\end{defi}
Then we have the following properties of $\zeta^T$.
\begin{thm}[\cite{clavier2020double}] \label{thm:knowm_AZV}
 A tree $t\in\Tree(\N^*)$ lies in $\Tree(\N^*)^{\rm conv}$ if, and only if $t=|$ or its root is not decorated by $1$.
 
 Then $\flaten(\Tree(\N^*)^{\rm conv})=\Wcv$ and $\zeta^T$ factorises through the flattening: $\zeta^T=\zeta\circ\flaten$.
\end{thm}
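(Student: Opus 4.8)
The plan is to reduce all three assertions to a single \emph{finite} identity that holds before any limit is taken. For a word $w=w_1\cdots w_m$ and an integer $N\ge 1$ introduce the truncated nested sum
\[
Z_N(w)\coloneqq\sum_{N>n_1>\cdots>n_m\ge 1}\ \prod_{i=1}^m n_i^{-w_i},\qquad Z_N(\emptyset)\coloneqq 1,
\]
extended linearly to $\W$, and for a tree $t$ the truncated arborified sum $S_t(N)\coloneqq\sum_{\boldsymbol{k}\in D_t,\ k_v<N}\prod_{v\in\nu(t)}k_v^{-n_v}$. Since $D_t$ is the increasing union over $N$ of its truncations and all summands are positive, monotone convergence gives $\zeta^T(t)=\lim_{N\to\infty}S_t(N)\in[0,+\infty]$, and likewise $\zeta(w)=\lim_{N\to\infty}Z_N(w)$ for convergent $w$. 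The central claim will be that $S_t(N)=Z_N(\flaten(t))$ for \emph{every} tree $t$ and \emph{every} $N$; the convergence criterion, the image computation, and the factorisation then all drop out by inspection or by passing to the limit.

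First I would prove this truncated identity by induction on $|\nu(t)|$. The base case $t=|$ is immediate, both sides being $1$. For the step, write $t=\B^+_n(t_1,\dots,t_k)$; because the root is the common ancestor of every other internal vertex, once its label $k_r$ is fixed the labels in distinct subtrees are mutually unconstrained and each bounded by $k_r$, so the tree sum factorises as
\[
S_t(N)=\sum_{k_r=1}^{N-1}k_r^{-n}\prod_{i=1}^k S_{t_i}(k_r).
\]
Peeling off the first (largest) letter of $\flaten(t)=n\cdot\big(\flaten(t_1)\qshuffle\cdots\qshuffle\flaten(t_k)\big)$ gives the parallel word identity $Z_N(\flaten(t))=\sum_{k_r=1}^{N-1}k_r^{-n}\,Z_{k_r}\big(\flaten(t_1)\qshuffle\cdots\qshuffle\flaten(t_k)\big)$. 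Applying the induction hypothesis $S_{t_i}(k_r)=Z_{k_r}(\flaten(t_i))$ to each factor reduces the claim to the truncated stuffle identity $Z_M(u)\,Z_M(v)=Z_M(u\qshuffle v)$ for all words $u,v$ and all $M$. This last identity, which I would obtain by splitting the product of the two nested sums according to the relative order and the possible coincidences of their summation indices — exactly the combinatorics encoded by $\QSh(k,l)$ and by the merging ($\cdot$) term of the quasi-shuffle — together with the bookkeeping of the shared bound $k_r$ through the recursion, is the main obstacle; everything else is formal.

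Granting the truncated identity, the convergence criterion follows from the factorised limit $\zeta^T(t)=\sum_{k\ge 1}k^{-n}\prod_i S_{t_i}(k)$. Since every internal decoration is $\ge 1$, each $S_{t_i}(k)=Z_k(\flaten(t_i))$ is positive, nondecreasing in $k$, and bounded by a power of $\log k$, so $\prod_i S_{t_i}(k)=O\big((\log k)^{|\nu(t)|-1}\big)$ and the tail behaves like $\sum_k k^{-n}(\log k)^{|\nu(t)|-1}$, which converges precisely when $n\ge 2$. Conversely, when $n=1$ positivity gives $\prod_i S_{t_i}(k)\ge c>0$ for large $k$, whence $\zeta^T(t)\ge c\sum_k k^{-1}=+\infty$ diverges. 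This identifies $\Treecv$ with the trees equal to $|$ or having root decoration $\neq 1$, proving the first assertion.

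Finally I would assemble the remaining two statements. For $\flaten(\Treecv)=\Wcv$: every word occurring in $\flaten(t)$ for $t=\B^+_n(\dots)$ begins with the root letter $n$, so for a convergent tree ($n\neq1$) all these words lie in $\Wcv$, giving $\flaten(\Treecv)\subseteq\Wcv$; surjectivity follows by realising an arbitrary basis word $w_1\cdots w_m\in\Wcv$ as the flattening of the non-branching ladder $\B^+_{w_1}(\B^+_{w_2}(\cdots),|)$, whose internal vertices each have a single angle so the decoration condition $d(v)\ge|\angles(v)|$ holds automatically, and which is convergent since $w_1\neq 1$. For the factorisation $\zeta^T=\zeta\circ\flaten$, I pass to the limit $N\to\infty$ in the truncated identity on the convergent domain: $\zeta^T(t)=\lim_N S_t(N)=\lim_N Z_N(\flaten(t))=\zeta(\flaten(t))$, where the linearity of $\flaten$ and $Z_N$ together with the termwise convergence of each word appearing in $\flaten(t)$ — guaranteed by the first part — justifies exchanging the limit with the finite linear combination.
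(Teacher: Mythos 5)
The paper states this theorem as a recalled result from the reference given in its header and provides no proof of it, so there is no in-paper argument to compare yours against; judged on its own, your proof is correct and self-contained. Your route --- establishing the single truncated identity $S_t(N)=Z_N(\flaten(t))$ by induction on $|\nu(t)|$, via the factorisation of the tree sum over the root index and the truncated quasi-shuffle identity $Z_M(u)\,Z_M(v)=Z_M(u\qshuffle v)$ --- is more elementary than what the paper's own machinery would suggest (there, $\flaten$ is only later shown to be a tridendriform morphism in lemma~\ref{lem:flat_tridend}, from which the factorisation $\zeta^T=\zeta\circ\flaten$ would follow together with an evaluation argument, with convergence handled separately). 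What your approach buys is that all three assertions drop out of one finite identity: the convergence criterion comes from positivity and the polylogarithmic bound $S_{t_i}(k)=O\bigl((\log k)^{|\nu(t_i)|}\bigr)$, the divergence for root label $1$ from monotonicity and positivity, and the factorisation from a legitimate passage to the limit in a finite linear combination of convergent words. The two steps carrying the real content are both correctly identified and correctly argued: the factorisation $S_t(N)=\sum_{k_r<N}k_r^{-n}\prod_i S_{t_i}(k_r)$ is valid because vertices in distinct subtrees are incomparable in the tree order, so their indices are constrained only by $k_r$; and the truncated stuffle identity is the standard decomposition of a product of two nested sums according to the order type, coincidences included, of the merged index sets, which is exactly what $\QSh(k,l)$ enumerates. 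The surjectivity of $\flaten$ onto $\Wcv$ via the ladders $\B^+_{w_1}(\B^+_{w_2}(\cdots),|)$ is also fine, since each such internal vertex has a single angle and the decoration constraint $d(v)\geq|\angles(v)|$ is then automatic.
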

Finally, the map flat have an important property for our context that we will later use.
\begin{Lemme} \label{lem:flat_tridend}
	The map $\flaten:\Tree(\N^*)\rightarrow \W$ is a morphism of tridendriform algebras. 
\end{Lemme}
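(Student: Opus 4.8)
The plan is to prove the three identities $\flaten(t\prec s)=\flaten(t)\prec\flaten(s)$, $\flaten(t\succ s)=\flaten(t)\succ\flaten(s)$ and $\flaten(t\cdot s)=\flaten(t)\cdot\flaten(s)$ simultaneously, by strong induction on the total number of internal vertices $|\nu(t)|+|\nu(s)|$; summing the three then gives $\flaten(t*s)=\flaten(t)\qshuffle\flaten(s)$, so it is enough to treat the pieces. First I would record the inductive tools. By linearity of $\flaten$ and commutativity of $\qshuffle$, Definition~\ref{defi:flat_map} reads, for a tree of root decoration $N$ with children subtrees $t^{(1)},\dots,t^{(k)}$ in any planar order, as $\flaten(t)=N\cdot(a_1\qshuffle\cdots\qshuffle a_k)$ where $a_i:=\flaten(t^{(i)})$; in particular the left-to-right order of the children is irrelevant to $\flaten$. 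Next, the inductive description of the products of Lemma~\ref{Lem:induction_product}, stated in $\Tridend(\N^*)$, transports to $\Tree(\N^*)$ through the surjective tridendriform morphism $\iota$ of Proposition~\ref{prop:tree_tridend}: choosing preimages under $\iota$ and using Lemma~\ref{lem:iota_morphism}, each angle-decoration word is replaced by its sum, so for $t,s\in\Tree(\N^*)$ with root decorations $N,M$ and left-to-right children $t^{(1)},\dots,t^{(k)}$ and $s^{(1)},\dots,s^{(l)}$, the tree $t\prec s$ has root decoration $N$ and children $t^{(1)},\dots,t^{(k-1)},(t^{(k)}*s)$, the tree $t\succ s$ has root decoration $M$ and children $(t*s^{(1)}),s^{(2)},\dots,s^{(l)}$, and $t\cdot s$ has root decoration $N+M$ and children $t^{(1)},\dots,t^{(k-1)},(t^{(k)}*s^{(1)}),s^{(2)},\dots,s^{(l)}$.

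For the base case, when $t=|$ or $s=|$, I would use $\flaten(|)=\emptyset$ together with the unit conventions~\eqref{eq:augm}: for instance $|\prec s=0=\emptyset\prec\flaten(s)$, $|\succ s=s$ with $\flaten(|\succ s)=\flaten(s)=\emptyset\succ\flaten(s)$, and $|\cdot s=0=\emptyset\cdot\flaten(s)$, and symmetrically when $s=|$. For the inductive step both $t,s\ne|$; writing $P:=a_1\qshuffle\cdots\qshuffle a_k$ and $Q:=b_1\qshuffle\cdots\qshuffle b_l$, so that $\flaten(t)=N\cdot P$ and $\flaten(s)=M\cdot Q$, I would feed the displayed recursions into the induction hypothesis $\flaten(u*v)=\flaten(u)\qshuffle\flaten(v)$, valid for the strictly smaller pairs $(t^{(k)},s)$, $(t,s^{(1)})$ and $(t^{(k)},s^{(1)})$.

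For $\prec$ this yields $\flaten(t\prec s)=N\cdot\big(a_1\qshuffle\cdots\qshuffle a_{k-1}\qshuffle(a_k\qshuffle(M\cdot Q))\big)=N\cdot\big(P\qshuffle(M\cdot Q)\big)$, which is exactly $(N\cdot P)\prec(M\cdot Q)=\flaten(t)\prec\flaten(s)$ by Definition~\ref{def:quasi_shuffle_words}; the case $\succ$ is entirely symmetric and produces $M\cdot\big((N\cdot P)\qshuffle Q\big)=(N\cdot P)\succ(M\cdot Q)$. The delicate step, and the only one needing care, is the middle product $\cdot$, where the two roots merge and their decorations add: the recursion gives $\flaten(t\cdot s)=(N+M)\cdot\big(a_1\qshuffle\cdots\qshuffle a_{k-1}\qshuffle(a_k\qshuffle b_1)\qshuffle b_2\qshuffle\cdots\qshuffle b_l\big)=(N+M)\cdot(P\qshuffle Q)$, while the word-level formula of Definition~\ref{def:quasi_shuffle_words} gives precisely $(N\cdot P)\cdot(M\cdot Q)=(N+M)\cdot(P\qshuffle Q)$, since the middle product of two words adds their first letters and quasi-shuffles the tails. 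Hence $\flaten(t\cdot s)=\flaten(t)\cdot\flaten(s)$, closing the induction.

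I expect no single hard estimate; the obstacle is rather bookkeeping. The two points to get right are that the addition of root decorations on the tree side matches the addition of first letters on the word side in the $\cdot$ case, and that the inductive product formulas are genuinely available in $\Tree(\N^*)$ rather than only in $\Tridend(\N^*)$ — a legitimacy guaranteed by Proposition~\ref{prop:tree_tridend} and Lemma~\ref{lem:iota_morphism}, which is why I would set up that transport explicitly at the start.
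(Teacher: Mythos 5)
Your proposal is correct and follows essentially the same route as the paper's proof: induction on the total number of internal vertices, transport of the inductive product formulas of Lemma~\ref{Lem:induction_product} to $\Tree(\N^*)$ via $\iota$, and the computation reducing each of $\prec$, $\succ$, $\cdot$ to the corresponding word-level formula using associativity and commutativity of $\qshuffle$. The only cosmetic difference is that you take the unit case $t=|$ or $s=|$ as the base case (with the conventions of equation~\eqref{eq:augm}), whereas the paper starts from the two-single-vertex case and absorbs the unit case into the inductive step; both organizations are sound.
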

\begin{proof}
	We prove that $\flaten$ is a morphism of tridendriform algebras with an induction over $v$ the sum of the numbers of internal vertices of the trees we are multiplying.
	\begin{description}
		\item[Initialisation:] $v=2$, we have for any $n,m\in\N^*$:
			\allowdisplaybreaks[3]
		\begin{align*}
		\flaten\left(\labY{n} \prec \labY{m} \right)&=\flaten\left(\labbalaisd{n}{m} \right) =n\flaten\left(\labY{m} \right) = nm=n\prec m, \\
			\flaten\left(\labY{n} \succ \labY{m} \right)&=\flaten\left(\labbalaisg{m}{n} \right) =m\flaten\left(\labY{n} \right) =mn= n\succ m, \\
			\flaten\left(\labY{n}\cdot \labY{m}\right)&=\flaten\left(\raisebox{-0.4\height}{\begin{tikzpicture}[line cap=round,line join=round,>=triangle 45,x=0.4cm,y=0.4cm]
					\draw [line width=.5pt] (0.,0.)-- (0.,2.);
					\draw [line width=.5pt] (0.,1.)-- (-1.,2.);
					\draw [line width=.5pt] (0.,1.)-- (1.,2.);
					%decoration
					\draw[right] (0,1) node {\footnotesize{n+m}}; 
			\end{tikzpicture}}\right)=n+m=n\cdot m.	
		\end{align*}
		\allowdisplaybreaks[0]
		\item[Heredity:]suppose that there exists $v$ such that for all pair of trees $(t_1,t_2),{|\nu(t_1)|+|\nu(t_2)|\leq v}, \flaten$ behaves like a tridendriform morphism. 
		Consider $t,s$ two elements of $\Tree(\N^*)$ with ${|\nu(t)|+|\nu(s)|=v+1}$. Let us show that $\flaten(t\ltimes s)=\flaten(t)\ltimes \flaten(s)$ for $\ltimes\in\{\prec,\succ,\cdot\}$. If $|\nu(t)|=0$ or $|\nu(s)|=0$, the result trivially holds. Otherwise, we can write 
		\begin{equation*} 
		 t=\B^+_n(t_1,\cdots,t_k),\quad s=\B^+_m(s_1,\cdots,s_l).
		\end{equation*}
		Then, using the inductive descriptions of the tridendriform products from lemma~\ref{Lem:induction_product} (which hold for the tridendriform products of $\Tree(\N^*)$ by lemma \ref{lem:iota_morphism}) we have:
		\begin{align*}
			\flaten(t\prec s)&=\flaten\left( \B^+_n\left( t_1, \dots , (t_k*s)\right) \right) \\
			&= n\left( \flaten\left( t_1 \right) \cshuffle \cdots \cshuffle \flaten\left( t_k* s \right) \right) \tag*{by definition of $\flaten$} \\
			&=n\left( \flaten\left( t_1 \right)\cshuffle \cdots\cshuffle \flaten\left( t_k \right)\cshuffle \flaten\left( s \right) \right)
		\end{align*} from the associativity of $\cshuffle$ and the induction hypothesis. Hence:
		\begin{align*}
			\flaten(t\prec s)&= \left[ n\left(\flaten\left( t_1 \right)\cshuffle \cdots \cshuffle \flaten\left( t_k \right) \right) \right]\prec \flaten\left( s\right) \\
			&=\flaten(t)\prec \flaten(s).
		\end{align*}
		Moreover, using the same ideas:
		\begin{align*}
			\flaten\left( t\succ s \right)
			&=m\left( \flaten\left( t* s_1 \right)\cshuffle \cdots \cshuffle \flaten\left(s_l \right) \right) \\
			&=\flaten(s)\succ \left[m\left(\flaten\left( s_1\right)\cshuffle \cdots \cshuffle \flaten\left( s_l\right) \right)\right] \\
			&=\flaten\left(s \right)\succ \flaten\left( t\right).
		\end{align*}
		Finally:
		\begin{align*}
			\flaten(t\cdot s)&= \flaten\left(\B^+_{n+m}(t_1,\dots, t_k*s_1, s_2,\dots, s_l)\right) \\
			&=(n+m) \left( \flaten\left(t_1\right)\cshuffle \cdots \cshuffle \flaten\left(t_k* s_1\right)\cshuffle\cdots \cshuffle  \flaten\left(s_l\right)  \right) \\
			&=(n+m) \left( \flaten\left(t_1\right)\cshuffle \cdots \cshuffle \flaten\left(t_k\right)\cshuffle \flaten\left(s_1\right)\cshuffle\cdots \cshuffle  \flaten\left(s_l\right)\right) \\
			&=\left[ n \left( \flaten\left(t_1\right)\cshuffle \cdots \cshuffle \flaten\left(t_k\right)\right)\right] \cdot \left[m\flaten\left(s_1\right)\cshuffle\cdots \cshuffle  \flaten\left(s_l\right)\right] \\
			&=\flaten(t)\cdot \flaten(s).
		\end{align*} 
	\end{description}
	Hence, by the induction principle, it shows that $\flaten$ is a tridendriform morphism between $\Tree(\N^*)$ and $\W$.
\end{proof}

\subsection{Relating the Zetas}

Let us recall the following maps, which were respectively introduced in theorem~\ref{thm:univ_prop} and definition~\ref{def:triden_zeta}:
\begin{align*}
		i:\left\{ \begin{array}{rcl}
			\N^* &\rightarrow & \Tridend(\N^*), \\
			n & \mapsto & \labY{n}, 
		\end{array}\right. && \sum:\left\lbrace\begin{array}{rcl}
			\N^* & \rightarrow & \Suum_{\N^*}, \\
			n &\mapsto & \displaystyle\sum_{1\leq k} f_n.
		\end{array} \right.
	\end{align*}
Let us also denote by $j$ the natural inclusion from $\N^*$ into $\W$. Then we recall the maps of tridendriform algebras: $\zeta^{\rm Tri}:\Tridend(\N^*)\longrightarrow\Suum_{\N^*}$ and $\Psi:\Tridend(\N^*)\longrightarrow\W$ whose existence and unicity is given by the universal property of $\Tridend(\N^*)$ (theorem~\ref{thm:univ_prop}) as we can see in diagrams~\ref{diag:1tridendfreeness} and~\ref{diag:2tridendfreeness}. 
\begin{figure}[ht]
	\centering 
	\begin{subfigure}[t]{0.45\linewidth}
		\begin{tikzcd}
			\N^* \arrow[rd, "j"] \arrow[r, "i"] & \Tridend(\N^*) \arrow[d ,dashed, "\exists!\,\Psi"] \\
			& \W
		\end{tikzcd}
		\caption{First freeness property of $\Tridend(\N^*)$}
		\label{diag:1tridendfreeness}
	\end{subfigure}
	\begin{subfigure}[t]{0.45\linewidth}
		\begin{tikzcd}
			\N^* \arrow[rd, "\sum"] \arrow[r, "i"] & \Tridend(\N^*) \arrow[d ,dashed, "\exists!\,\zeta^{\rm Tri}"] \\
			& \Suum_{\N^*}
		\end{tikzcd}
		\caption{Second freeness property of $\Tridend(\N^*)$}
		\label{diag:2tridendfreeness}
	\end{subfigure}
\label{diag:freeness}
\caption{Freeness diagrams}
\end{figure}
These tridendriform maps and algebras are related.
\begin{thm} \label{thm:zeta_tridend}
 The following diagram commutes: 
 \begin{equation}\label{diag:thm_tridend} 
   \xymatrix{
    \Tree(\N^*)\ar@{->}[rd]_\flaten & \Tridend(\N^*) \ar@{->>}[l]_\iota \ar@{->}[d]^\Psi \ar@{->}[r]^-{\zeta^{\rm Tri}} & \calS_{\N^*} \\ 
   & \W \ar@{->}[ur]_{\zeta_{\rm FS}}  &
 }.
 \end{equation}
\end{thm}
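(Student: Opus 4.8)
The plan is to read the square as two triangles and to discharge each one with the universal property of $\Tridend(\N^*)$ (theorem~\ref{thm:univ_prop}). The left triangle asserts $\flaten\circ\iota=\Psi$ as maps $\Tridend(\N^*)\to\W$, and the right triangle asserts $\zeta_{\rm FS}\circ\Psi=\zeta^{\rm Tri}$ as maps $\Tridend(\N^*)\to\calS_{\N^*}$. For each triangle I would first check that both composites are morphisms of tridendriform algebras, so that the \emph{uniqueness} half of the universal property reduces the whole verification to testing equality on the generators $i(n)=\labY{n}$, $n\in\N^*$.

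For the left triangle, $\iota$ is a tridendriform morphism by proposition~\ref{prop:tree_tridend}, $\flaten$ is one by lemma~\ref{lem:flat_tridend}, and $\Psi$ is one by its very construction; hence $\flaten\circ\iota$ and $\Psi$ are both tridendriform morphisms from $\Tridend(\N^*)$ to $\W$. On a generator one computes $\Psi(i(n))=j(n)=n$, while $\iota(\labY{n})$ is the one-vertex corolla whose (unique) internal vertex is decorated by $n$, and $\flaten$ of this corolla is $n\cdot(\ew\qshuffle\ew)=n$ by definition~\ref{defi:flat_map}. The two morphisms agree on every generator, so the uniqueness in theorem~\ref{thm:univ_prop} gives $\flaten\circ\iota=\Psi$.

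For the right triangle, $\Psi$ is again a tridendriform morphism and $\zeta_{\rm FS}$ is one by proposition~\ref{Lem:zeta_morph_tridend}, so $\zeta_{\rm FS}\circ\Psi$ is a tridendriform morphism into $\calS_{\N^*}$, as is $\zeta^{\rm Tri}$ by construction. On the generator $i(n)$ we have $\zeta^{\rm Tri}(i(n))=\sum(n)=\sum_{1\leq k}f_n$, while $(\zeta_{\rm FS}\circ\Psi)(i(n))=\zeta_{\rm FS}(n)$, which by definition~\ref{def:zeta_FS} is the formal series $\sum_{1\leq n_1}f_n$; after relabelling the summation index these are the same element of $\calS_{\N^*}$. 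Uniqueness then yields $\zeta_{\rm FS}\circ\Psi=\zeta^{\rm Tri}$, and combining the two triangles shows that the full diagram commutes.

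I do not expect a genuinely hard step here: the entire content lies in the observation that \emph{every} arrow of the diagram is a morphism of tridendriform algebras, which is precisely what lets freeness collapse the problem to the generators. The only point demanding a little care is the bookkeeping on those generators---checking that $\iota$ turns the $n$-labelled corolla into itself carrying a vertex decoration, that $\flaten$ collapses a corolla to its root label via the empty quasi-shuffle, and that the lower bound of summation in $\sum(n)$ matches that of $\zeta_{\rm FS}(n)$ once the index is renamed---so that all three composites literally coincide on $\labY{n}$.
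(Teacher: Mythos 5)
Your proposal is correct and follows essentially the same route as the paper: both arguments observe that every arrow is a morphism of tridendriform algebras and then invoke the uniqueness clause of the universal property of $\Tridend(\N^*)$ (theorem~\ref{thm:univ_prop}) separately on each triangle. The only difference is that you spell out the generator computations ($\flaten(\iota(\labY{n}))=n$ and $\zeta_{\rm FS}(n)=\sum(n)$) which the paper leaves implicit in the phrase ``replacing $\Psi$ by $\flaten\circ\iota$ also commutes''; this is a harmless and indeed welcome elaboration.
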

\begin{proof}
 We know that $\iota$ and flat are tridendriform algebras morphisms (proposition~\ref{prop:tree_tridend} and lemma~\ref{lem:flat_tridend} respectively). Thus $\flaten\circ\iota:\Tridend(\N^*)\longrightarrow\W$ is a morphism of tridendriform algebras. But from the definition of maps $i$ and $j$ in the diagram~\ref{diag:1tridendfreeness}, replacing $\Psi$ by $\flaten\circ \iota$ also commutes. Thus, by the unicity of $\Psi$ in this diagram, we have $\Psi=\flaten\circ\iota$.
 
 The same argument also holds for the commutation of the second triangle since ${\zeta_{\FS}:\W\longrightarrow\calS_{\N^*}}$ is a morphism of tridendriform algebras (proposition~\ref{Lem:zeta_morph_tridend}) as well as $\Psi:\Tridend(\N^*)\longrightarrow\calW_{\N^*}$ by definition. Furthermore, replacing $\zeta^{\rm Tri}$ by $\Psi \circ \zeta_{\FS}$ in diagram~\ref{diag:2tridendfreeness} also commutes. So $\zeta^{\rm Tri}=\zeta_{\rm FS}\circ\Psi$ by unicity of $\zeta^{\rm Tri}$.
\end{proof}
Let us now define convergent elements of $\Tridend(\N^*)$.
\begin{defi}\label{def:Schroeder_tree_convergent}
 Let $t\in\Tridend(\N^*)$ be a decorated Schroeder tree. Then $t$ is \emph{convergent} if $t=|$ or the root of $t$ does not have a unique angle decorated by $1$. Let $\Tridend(\N^*)^{\rm conv}$ be the subvector space of $\Tridend(\N^*)$ generated by convergent trees.
\end{defi}
Then specializing the diagram of theorem~\ref{thm:zeta_tridend} to convergent trees we obtain
\begin{Cor} \label{coro:relation_zetas}
 The following diagram commutes:
 \begin{equation*}
   \xymatrix{
    \Tree(\N^*)^{\rm conv} \ar@{->}[rd]_\flaten \ar@{->}@/_1pc/[rdd]_{\zeta^T} & \Tridend(\N^*)^{\rm conv} \ar@{->>}[l]_\iota \ar@{->}[d]^\Psi \ar@{->}[r]^-{\zeta^{\rm Tri}} & \calS_{\N^*}^{\rm conv}  \ar@{->}@/^1pc/[ldd]^{\rm ev}\\ 
   & \Wcv \ar@{->}[ur]_{\zeta_{\rm FS}} \ar@{->}[d]^\zeta  & \\
   & \R & 
 }.
 \end{equation*}
\end{Cor}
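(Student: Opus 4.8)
The plan is to deduce the corollary by \emph{restricting} the commutative diagram of theorem~\ref{thm:zeta_tridend} to the convergent subspaces and then gluing onto it the two outer triangles, which merely record factorisations established earlier. Indeed, the two inner triangles of the corollary are literally those of theorem~\ref{thm:zeta_tridend}, so they commute as soon as we know that the restricted diagram makes sense; the only genuinely new work is to check that every map sends convergent objects to convergent objects (so that in particular $\ev$, which is only defined on $\Suumcv$, can be applied) and to identify the two curved arrows.

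First I would verify the convergence compatibilities, the crux being the behaviour of $\iota$. A tree of $\Tridend(\N^*)$ is convergent when its root does not carry a single angle decorated by $1$, whereas a tree of $\Tree(\N^*)$ is convergent when its root is not decorated by $1$ (theorem~\ref{thm:knowm_AZV}). Since $\iota$ replaces the angle decorations of each vertex by their sum, a convergent root in $\Tridend(\N^*)$ either has a unique angle decorated by some $\omega\neq 1$, giving root decoration $\omega\neq 1$, or has at least two angles, giving root decoration $\geq 2$; in both cases the image root is not decorated by $1$, so $\iota\bigl(\Tridend(\N^*)^{\rm conv}\bigr)\subseteq\Tree(\N^*)^{\rm conv}$ (and the explicit preimage built in lemma~\ref{lem:iota_morphism} can be chosen convergent, so surjectivity persists). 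Then $\Psi=\flaten\circ\iota$ lands in $\flaten\bigl(\Tree(\N^*)^{\rm conv}\bigr)=\Wcv$ by theorem~\ref{thm:knowm_AZV}, and $\zeta_{\rm FS}$ sends a word not starting with $1$ to a formal series whose first letter is not $1$, i.e.\ into $\Suumcv$; hence $\zeta^{\rm Tri}=\zeta_{\rm FS}\circ\Psi$ lands in $\Suumcv$ as required.

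With the restrictions in place I would invoke the two already-proved factorisations for the outer triangles, namely $\zeta^T=\zeta\circ\flaten$ from theorem~\ref{thm:knowm_AZV} and $\zeta=\ev\circ\zeta_{\rm FS}$ from definition~\ref{def:zeta_FS}, and then close everything by a short chase: from $\zeta^{\rm Tri}=\zeta_{\rm FS}\circ\Psi$ we get $\ev\circ\zeta^{\rm Tri}=\ev\circ\zeta_{\rm FS}\circ\Psi=\zeta\circ\Psi$, while from $\Psi=\flaten\circ\iota$ together with $\zeta^T=\zeta\circ\flaten$ we get $\zeta\circ\Psi=\zeta\circ\flaten\circ\iota=\zeta^T\circ\iota$, so all the composites from $\Tridend(\N^*)^{\rm conv}$ to $\R$ coincide. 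I expect the bookkeeping of the two convergence conditions, in particular the verification that $\iota$ respects convergence, to be the only real obstacle; everything else is an immediate consequence of theorems~\ref{thm:zeta_tridend} and~\ref{thm:knowm_AZV} and definition~\ref{def:zeta_FS}, and no fresh analytic input is needed since the sole passage to real numbers is absorbed into the quoted factorisation $\zeta=\ev\circ\zeta_{\rm FS}$.
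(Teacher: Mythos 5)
Your proposal is correct and follows essentially the same route as the paper: restrict the commuting triangles of theorem~\ref{thm:zeta_tridend} to the convergent subspaces, then adjoin the factorisations $\zeta^T=\zeta\circ\flaten$ (theorem~\ref{thm:knowm_AZV}) and $\zeta=\ev\circ\zeta_{\rm FS}$ (definition~\ref{def:zeta_FS}) for the two outer triangles. The only difference is that you spell out the convergence-preservation of $\iota$ explicitly, whereas the paper simply asserts that the maps send convergent spaces to convergent spaces; your verification is a welcome addition, not a deviation.
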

\begin{proof}
 From the definitions of the maps and the spaces in this diagram, it is clear that the maps send convergent spaces to convergent spaces. The two upper triangles are a special case of theorem~\ref{thm:zeta_tridend}. The left lower triangle is theorem~\ref{thm:knowm_AZV}. The right lower triangle is a direct consequence of the definition of $\zeta_{\rm FS}$ (definition~\ref{def:zeta_FS}).
\end{proof}
In particular, we obtain that tridendriform zeta values are \AZVs{}, and also linear combinations (with integer coefficients) of \MZVs{}, given by
\begin{equation*}
 \forall t\in\Tridend(\N^*)^{\rm conv},~\ev\circ\zeta^{\rm Tri}(t)=\zeta^T(\iota(t))= \zeta\circ\flaten\circ\iota(t).
\end{equation*}
Finally, notice that this construction implies a new property for \AZVs{}.
\begin{Cor} \label{coro:AZV_alg_morph}
The map $\zeta^T:\Treecv\rightarrow \R$ is an algebra morphism for the quasi-shuffle product of $\Tree(\N^*)$: for any couple of trees $(t_1,t_2)\in\left(\Treecv\right)^2$ we have 
 \[
	\zeta^T(t_1* t_2)=\zeta^T(t_1)\zeta
	^T(t_2).
	\]
\end{Cor}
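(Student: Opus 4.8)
The plan is to read off the claim from the factorisation $\zeta^T=\zeta\circ\flaten$ of theorem~\ref{thm:knowm_AZV}, combined with the fact that both maps in this composition already respect the relevant associative products. Recall that the associative product $*$ on $\Tree(\N^*)$ is by definition (proposition~\ref{prop:tree_tridend}) the sum $\prec+\succ+\cdot$ of the three tridendriform operations, and likewise the quasi-shuffle $\qshuffle$ on $\W$ is the sum of the three tridendriform operations on words. Since lemma~\ref{lem:flat_tridend} asserts that $\flaten$ is a morphism of tridendriform algebras, it respects each of $\prec,\succ,\cdot$ and hence also their sum; in particular $\flaten(t_1*t_2)=\flaten(t_1)\qshuffle\flaten(t_2)$.

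With this in hand the computation is short. For $(t_1,t_2)\in(\Treecv)^2$ I would write
\begin{align*}
\zeta^T(t_1*t_2)&=\zeta\bigl(\flaten(t_1*t_2)\bigr)=\zeta\bigl(\flaten(t_1)\qshuffle\flaten(t_2)\bigr)\\
&=\zeta\bigl(\flaten(t_1)\bigr)\cdot\zeta\bigl(\flaten(t_2)\bigr)=\zeta^T(t_1)\,\zeta^T(t_2),
\end{align*}
where the first and last equalities are theorem~\ref{thm:knowm_AZV}, the second is the morphism property of $\flaten$ just recalled, and the third is the well-known fact that $\zeta$ is an algebra morphism for the quasi-shuffle of words (equation~\eqref{eq:shuffle_stuffle_zeta}).

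The only point that needs genuine care — which I regard as the single real step rather than pure bookkeeping — is to check that everything is well-defined, i.e. that $\Treecv$ is stable under $*$, so that $\zeta^T(t_1*t_2)$ makes sense and theorem~\ref{thm:knowm_AZV} applies to it. By definition $*$-products are sums of trees $\sigma(t_1,t_2)$, whose root is node~$1$ of the grafting ladder; inspecting the three cases $\sigma^{-1}(\{1\})=\{1\}$, $\{k+1\}$, or $\{1,k+1\}$, this root is decorated by the root decoration of $t_1$, of $t_2$, or by their sum. As $t_1,t_2\in\Treecv$ have root decorations $\neq 1$ (hence $\geq 2$), the resulting root decoration is always $\geq 2$, so each $\sigma(t_1,t_2)$ is again convergent and $\zeta^T$ is defined on $t_1*t_2$. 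Once this stability is verified the displayed chain closes the argument. As a cross-check one may instead transport the morphism property along the surjection $\iota$ using the diagram of corollary~\ref{coro:relation_zetas}: there $\ev\circ\zeta^{\rm Tri}=\zeta^T\circ\iota$ on convergent trees, and $\ev\circ\zeta^{\rm Tri}$ is an algebra morphism for $*$ because $\zeta^{\rm Tri}$ is tridendriform and $\ev$ is a quasi-shuffle morphism (proposition~\ref{prop:ev_alg_morph}); surjectivity of $\iota$ then yields the claim.
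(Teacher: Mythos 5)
Your proposal is correct and follows essentially the same route as the paper: the paper's proof likewise notes that $\Treecv$ is a tridendriform subalgebra, then chains $\zeta^T=\zeta\circ\flaten$ (via corollary~\ref{coro:relation_zetas}, whose lower-left triangle is exactly theorem~\ref{thm:knowm_AZV}), the morphism property of $\flaten$ from lemma~\ref{lem:flat_tridend}, and the quasi-shuffle morphism property of $\zeta$. Your explicit check of the root decorations under the three products is a more detailed justification of the stability of $\Treecv$ than the paper's ``clear from the definition,'' but it is the same argument.
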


\begin{proof}
 It is clear from the definition that $\Treecv$ is a tridendriform subalgebra of $\Tree(\N^*)$. Then for any couple of trees $(t_1,t_2)\in\left(\Treecv\right)^2$ we have 
 \begin{align*}
  \zeta^T(t_1* t_2) & = \zeta\circ\flaten(t_1* t_2)\tag*{by corollary~\ref{coro:relation_zetas}} \\
  & = \zeta\circ\left(\flaten(t_1)\cshuffle \flaten(t_2)\right) \tag*{by lemma~\ref{lem:flat_tridend}} \\
  & = \zeta\circ\flaten(t_1)\zeta\circ\flaten(t_2)\tag*{since $\zeta$ is an algebra morphism for $\cshuffle$} \\
  &=\zeta^T(t_1)\zeta
	^T(t_2)\tag*{by corollary~\ref{coro:relation_zetas}.} 
 \end{align*}
 Hence, we have proved the theorem. 
\end{proof}
Let us write down an explicit example of new relations amongst arborified zeta values using the computation of example~\ref{ex:prod_Tree}
\begin{Eg} 
	 We have
 \begin{align*}
  &\zeta^T\left(\labYY{2}{1}{1}\right)\zeta^T\left(\labY{2}\right) \\ 
  = & \zeta^T\left(\labYI{2}{1}{1}{2}\right) + \zeta^T\left(\labIY{2}{2}{1}{1}\right)+\zeta^T\left(\labIpY{2}{3}{1}\right) + \zeta^T\left(\labYgI{2}{2}{1}{1}\right) + \zeta^T\left(\labYYI{4}{1}{1}\right).
 \end{align*}
This relation can be checked using corollary~\ref{coro:relation_zetas}. It allows us to reformulate this equation as
\begin{align*}
	\left(2\zeta(2,1,1)+\zeta(2,2)\right)\zeta(2)&=	\zeta(2,1,2,1) +  2\zeta(2,1,1,2)+\zeta(2,1,3)+\zeta(2,2,2) \\
	+ & 2\zeta(2,2,1,1)+\zeta(2,1,2,1)+\zeta(2,3,1)+\zeta(2,2,2) \\
	+ & \zeta(2,3,1)+\zeta(2,1,3)+ \zeta(2,4) \\ 
  & +2\zeta(2,2,1,1)+\zeta(2,2,2)+2\zeta(4,1,1)+\zeta(4,2)
\end{align*}
which can be checked using the first equality in equation~\eqref{eq:shuffle_stuffle_zeta}.
\end{Eg}

We have solved the issue we sat ourselves upon and related our tridendriform zeta values to the usual \MZVs{} and their arborified counterparts. As a consequence of the structures underlining our approach, we found an associative product for Schroeder trees such that the arborified \MZV{} is a morphism for this product. In the next section, we are doing the same construction with the integral representation of \MZVs{} using a dendriform structure. It corresponds to a tridendriform structure where the $\cdot$ product is equal to $0$.
The proofs are similar and easier in this part. So, we will give less details in order to save space.
	
	\section{Dendriform algebra of formal integrals}\label{sec:dendri}
	
		We first built a tridendriform algebra to recover the arborified version of \MZVs{}. We now perform a quite similar construction to section~\ref{sec:tridend} with formal integrals. One can see the previous work as a specification of this one modifying the Lebesgue measure with another one with Radon-Nikodym decomposition  $\nu+\mu$ where $\mu\neq 0$ and $\mu$ and $\lambda$ are singular. As a consequence, the "diagonal" terms do not vanish any more.
			
	\subsection{Formal integrals}
		
	\begin{Not}
		Let $n\in\N^*.$
		We denote by $\lambda$ the \emph{Lebesgue measure over} $\R^n$. We denote by $\Borel(\R^n)$ the \emph{borelian set} of $\R^n$ for $\lambda$.
		We also denote by $\Neg(\R^n)$ the subset of $\Borel(\R^n)$ containing all negligible sets for the Lebesgue measure of $\R^n$. 
		Let $n\in\N^*$ and set $\B_{\Int}(\R^n)\coloneqq\faktor{\Borel(\R^n)}{\Neg(\R^n)}$. Hence, $\B_ {\Int}(\R^n)$ is our notation for the set of borelian set up to sets of measure $0$.
	\end{Not}
	\begin{Rq}
		One can take any measure to define formal integrals. For the sake of simplicity we take the Lebesgue measure since it is largely enough for our purposes.
	\end{Rq}

	\begin{defi}\label{defi:formal_integrals}
		For any $n\in\N^*,$ we consider the formal vector space $\K\B_{\Int}(\R^n)$ whose basis is $\B_{\Int}(\R^n)$ and we extend linearly the definition of the union $\cup$. We define $I$ the ideal of the algebra $(\K\B_{\Int}(\R^n),+,\cup)$ generated by the set of elements:
		\[
		\left\lbrace A\cup B-A-B+A\cap B \,\middle|\, A,B\in\B_{\Int}(\R^n)\right\rbrace.
		\]
		We finally set
		\[
		\Bint(\R^n)=\faktor{\K\B_{\Int}(\R^n)}{I}.
		\]	
		Let $\Omega$ be a set, $w\in \Omega^{\star}$ and a map $G\in(\R \rightarrow \R)^\Omega$. We denote for any $\omega\in\Omega, g_\omega\coloneqq G(\omega)$. Set $G_{w}:\R^{k}\rightarrow \R$ with $w=w_1\dots w_k$ the function defined by:
		\[
		G_w(x_1,\dots, x_k)\coloneqq \prod_{i=1}^{k} g_{w_i}(x_i).
		\]
		We call a \emph{formal integral} an element $(B\otimes w, G)$  of
		\begin{equation}
		\left\langle \Bint(\R^n)\otimes w  \,\middle|\, w\in\calW_{\Omega}, n=\ell(w) \right\rangle \times (\R\rightarrow \R)^{\Omega}. \label{eq:formal_integrals_building}
		\end{equation}
		A formal integral $(B\otimes w,G)$ with $w=w_1\cdots w_n$ will be written
			\[
			\int_A \left(\prod_{i=1}^{k} g_{w_i} \right)\dt_1\dots \dt_k.
			\]
			The omission of the variable in which the function is taken will distinguish formal integrals from usual integrals.
			We will denote $\FI_{\Omega}$ the set of definite formal integrals over $\Omega$.
	\end{defi}
	\begin{Not}
		Following notation~\ref{Not:cart}, we will denote a formal integral by $B\otimes w\times G$ .
	\end{Not}

	\begin{Rq}
	The notation of formal integrals is made such that a specification of $F$ gives a concrete integral to compute.	
	\end{Rq}
	As said in the previous section, our aim is to consider formal objects before any evaluation of the integral.
	
	\begin{Eg}
		In this section, we will mainly apply this construction with $\Omega=\{x,y\}$ and $G$ is the map:
		\[
		G: \left\lbrace\begin{aligned}
			&x\mapsto \left( g_x:\left\lbrace\begin{array}{rcl}
				\interoo{0 1} & \rightarrow & \R, \\
				t & \mapsto & \frac{1}{t},  
			\end{array} \right.\right), \\
			&y\rightarrow \left( g_y:\left\lbrace\begin{array}{rcl}
				\interoo{0 1} & \rightarrow & \R, \\
				t & \mapsto & \frac{1}{1-t}.
			\end{array} \right. \right)
		\end{aligned} \right.
		\]
		Hence a formal version of the following \MZV{} 
		\[
		\int_{\substack{0< t_2< t_1< 1, \\ 0< t_3 < t_1< 1}} \frac{\dt_1}{t_1}\frac{\dt_2}{1-t_2}\frac{\dt_3}{1-t_3}
		\]
		is given by:
		\[
		\{0< t_2 < t_2 < 1 \text{ or } 0< t_3 < t_1< 1\} \otimes xyy\times G.
		\]
		\end{Eg}
	\begin{Rq}
		Note that in the previous example omitted the limit step. One should get the formal version for any $0< a< b< 1$ then taking the limit $a\to 0$ and $b\to 1$. For sake of simplicity, we will always omit this step later on.
	\end{Rq}

	The previous construction enables the following manipulations of formal integrals, which mimic the properties of usual integrals. 
	\begin{Prop}\label{prop:formal_int_properties}
	Let $n\in \N^*$. Let $\Omega$ be a set, $A,B\in\Bint(\R^n)$, $w\in\calW_\Omega$ with $\ell(w)=n$, and $G\in(\R\rightarrow \R)^{\Omega}$.
		\begin{itemize}
			\item If $(A\cup B)\otimes w\times G$ is a formal integral over $\Omega$, then so are $A\otimes w\times G$ and $B\otimes w\times G$. Furthermore we have
			\[
			\int_{A\cup B} \left(\prod_{i=1}^n g_{\omega_i} \right)\prod_{i=1}^n\dt_i=\int_A \left( \prod_{i=1}^n g_{\omega_i} \right)\prod_{i=1}^n\dt_i+\int_B \left( \prod_{i=1}^n g_{\omega_i} \right)\prod_{i=1}^n\dt_i-\int_{A\cap B} \left( \prod_{i=1}^n g_{\omega_i}\right) \prod_{i=1}^n\dt_i.
			\]
			\item Let $w_1=\omega_1^{(1)}\dots \omega_1^{(k)}, w_2=\omega_2^{(1)}\dots \omega_2^{(k)}$ and $A\in\Bint(\R^k)$. Then:
			\begin{align*}
				&\int_A \left( \prod_{i=1}^k g_{\omega_1^{(i)}}+\prod_{j=1}^k g_{\omega_2^{(j)}} \right)\prod_{i=1}^{k} \dt_i 
				=\int_A \left( \prod_{i=1}^k g_{\omega_1^{(i)}}\right)\prod_{i=1}^{k}\dt_i
				+\int_A \left( \prod_{j=1}^n g_{\omega_2^{(j)}}\right)\prod_{j=1}^{k}\dt_j.
			\end{align*}
		\end{itemize}
	\end{Prop}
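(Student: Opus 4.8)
The plan is to follow the exact same strategy as the proof of proposition~\ref{prop:formal_series_properties} for formal series, since formal integrals are constructed in the same way: the Lebesgue-negligible quotient $\Bint(\R^n)$ plays the role of $\faktor{\R\calP(\N^n)}{J_n}$, and the pairing is with a fixed function $G\in(\R\to\R)^{\Omega}$ rather than with $F$. Consequently, both asserted identities are purely formal consequences of the vector-space structure of the left factor of equation~\eqref{eq:formal_integrals_building} together with the defining relation of the ideal $I$; no analytic input on convergence of integrals is required. For the membership part of the first bullet, I would simply observe that since $A,B\in\Bint(\R^n)$ with $n=\ell(w)$, the four classes $A$, $B$, $A\cup B$ and $A\cap B$ all lie in $\Bint(\R^n)$, so each of $A\otimes w\times G$, $B\otimes w\times G$, $(A\cup B)\otimes w\times G$ and $(A\cap B)\otimes w\times G$ is a legitimate element of the space of equation~\eqref{eq:formal_integrals_building}, which respects the dimensional constraint $n=\ell(w)$.

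The inclusion-exclusion identity then comes directly from the generator $A\cup B - A - B + A\cap B$ of the ideal $I$ quotiented out in definition~\ref{defi:formal_integrals}: in $\Bint(\R^n)$ one has the relation $A\cup B = A + B - A\cap B$. Tensoring this equality by $w$ in the left factor and pairing with the fixed $G$ yields
\[
(A\cup B)\otimes w\times G = A\otimes w\times G + B\otimes w\times G - (A\cap B)\otimes w\times G,
\]
which is exactly the claimed formula once rewritten with the integral notation $B\otimes w\times G = \int_{B}\left(\prod_{i=1}^{n} g_{\omega_i}\right)\prod_{i=1}^{n}\dt_i$ introduced in definition~\ref{defi:formal_integrals}.

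For the second bullet, the two integrands $\prod_{i=1}^{k} g_{\omega_1^{(i)}}$ and $\prod_{j=1}^{k} g_{\omega_2^{(j)}}$ are by definition $G_{w_1}$ and $G_{w_2}$, attached respectively to the formal integrals $A\otimes w_1\times G$ and $A\otimes w_2\times G$; their sum corresponds to $A\otimes(w_1+w_2)\times G$, and by linearity in the word slot of the tensor this splits as $A\otimes w_1\times G + A\otimes w_2\times G$. Rewriting in integral notation gives the additivity of the integrand. The computations are entirely routine, and the only point I would flag explicitly is that these are formal manipulations in the quotient space, not genuine integrals: one must keep track of the dimensional matching $n=\ell(w)$ and note that $G$ is merely carried along unchanged. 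In particular there is no linearity in $G$ itself; the additivity established here is linearity in the combinatorial (word) factor of the tensor, with $G$ held fixed, which is precisely why the argument reduces to the vector-space structure of equation~\eqref{eq:formal_integrals_building}.
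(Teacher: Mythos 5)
Your proposal is correct and follows exactly the paper's route: the authors prove this proposition by stating that the same strategy as proposition~\ref{prop:formal_series_properties} applies, namely using the generator $A\cup B-A-B+A\cap B$ of the quotiented ideal for the first identity and linearity in the word factor of the tensor for the second. Your write-up simply spells out those details explicitly, with the correct observation that no analytic input is needed.
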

	\begin{proof}
		A similar proof strategy to proposition~\ref{prop:formal_series_properties} applies here.
	\end{proof}
	
	\subsection{The dendriform algebra of formal integrals}	
	
	In this subsection, we will mainly focus on the case $\Omega=\{x,y\}$ and:
		\begin{equation}
			G: \left\lbrace\begin{aligned}
		&x\mapsto \left( g_x:\left\lbrace\begin{array}{rcl}
			\interoo{0 1} & \rightarrow & \R, \\
			t & \mapsto & \frac{1}{t},  
		\end{array} \right.\right), \\
		&y\rightarrow \left( g_y:\left\lbrace\begin{array}{rcl}
			\interoo{0 1} & \rightarrow & \R, \\
			t & \mapsto & \frac{1}{1-t}.
		\end{array} \right. \right) 
	\end{aligned} \right. \label{eq:G_function}
		\end{equation}
	
	In order to lighten the notations, we will omit the dependencies on $G$ in the remaining of this section. Note that some results are still true other appropriate $G$ and $\Omega$.
	
	\begin{defi} \label{def:formal_Chen_int}
	 Define the space of \emph{formal Chen integrals over $\Omega$} as the subspace of formal integrals with $G$ fixed of $\A_{\Omega}$ generated by
		\[
		\bigcup_{r\geq 0}\bigcup_{w\in \calW_{\Omega,r}} \left\{ \int_{0< t_1 < \dots < t_r< 1} \left(\prod_{i=1}^r g_{\omega_i}\right) \prod_{i=1}^r \dt_i \right\}
		\]
		where we set $w=\omega_1\dots\omega_r$ as usual. In fact, $\calA_{\Omega}$ inherits the vector space structure of left space in equation~\eqref{eq:formal_integrals_building}. Hence, $\calA_{\Omega}$ is the set of formal integrals needed to build \MZVs{} in their integral version (equation~\eqref{Prop:integral_representation_classic}).
	\end{defi}
It is well known, from standard analysis tools, that a formal integral $A\otimes w \times G\in\A_{\Omega}$ is equivalent to a convergent integral provided that the first letter of the word is an $x$ and the last one is a $y$. Then, the following map will encode all the analysis job to get any real value.
	\begin{defi}[Evaluation map]\label{defi:ev_map_dend}
		A \emph{formal integral} $A\otimes w \times G$ is said to be \emph{convergent} if $w=\ew$ or if $w$ begins with a $x$ and ends with a $y$ (see definition~\ref{def:conv_words}). We will denote the space of convergent formal integrals by $\FI^{\rm conv}_{\Omega}$. We define the \emph{evaluation map}, denoted $\ev$, as follows:
		\begin{equation*}
			\ev_{\Omega}:\left\lbrace\begin{array}{rcl}
				\FI^{\rm conv}_{\Omega} & \rightarrow & \R, \\
				\displaystyle \int_A \left( \prod_{i=1}^k f_{\omega_i} \right) \prod_{i=1}^k\dt_i & \mapsto & \displaystyle\int_A\left( \prod_{i=1}^k f_{\omega_i}(t_i) \right)\prod_{i=1}^k\dt_i.
			\end{array} \right.
		\end{equation*}
	\end{defi}	
	\begin{Rq}
	This evaluation map satisfies similar properties to remark~\ref{Rq:not_linear}.	
	\end{Rq}

	Using Chen's lemma for iterated integrals~\cite{chen1977iterated}, we introduce the unique shuffle product $\shuffle$ on formal integrals such that the evaluation map is a morphism for the multiplicative structure of $\R$.
	\begin{defi}\label{def:dend_structrure}
		We introduce two new operators $\prec$ and $\succ$ defined for any $\alpha,\beta\in\A_{\Omega}$ written as follows:
		\begin{align*}
			\alpha=\int_{0<t_1<\dots <t_k<1} \prod_{i=1}^{k} f_{\omega_i}\dt_i, 
			&& \beta=\int_{0<t_{k+1}<\dots <t_{k+l}<1} \prod_{i=k+1}^{k+l} f_{\omega_i}\dt_i,
		\end{align*}
		by:
		\begin{align*}
			\alpha\prec\beta&\coloneqq\sum_{\substack{\sigma\in\Sh(k,l) \\ \sigma^{-1}(\{1\})=\{1\}}}\int_{0<t_{1}<\dots <t_{k+l}<1} \prod_{i=1}^{k+l} f_{\omega_{\sigma^{-1}(i)}}\dt_i, \\
			\alpha\succ \beta&\coloneqq\sum_{\substack{\sigma\in\Sh(k,l) \\ \sigma^{-1}(\{1\})=\{k+1\}}}\int_{0<t_{1}<\dots <t_{k+l}<1} \prod_{i=1}^{k+l} f_{\omega_{\sigma^{-1}(i)}}\dt_i.
		\end{align*}
		Moreover, we have $\alpha\shuffle \beta \coloneqq \alpha\prec\beta+\alpha\succ \beta$.
		\end{defi}
		Then, we have analogous results to the tridendriform part.
	\begin{Prop}  \label{prop:chen_dend_struct}
		With this structure $(\A_{\Omega},\prec,\succ)$ is a dendriform algebra.
	\end{Prop}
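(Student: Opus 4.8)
The plan is to mirror the proof of Proposition~\ref{Prop:tridend_Suum}, transporting the known dendriform structure of words to $\A_\Omega$ through an explicit linear isomorphism. First I would introduce the map
\[
\Phi_G:\left\lbrace\begin{array}{rcl}
\calW_\Omega & \longrightarrow & \A_\Omega, \\
\omega_1\dots\omega_k & \longmapsto & \displaystyle\int_{0<t_1<\dots<t_k<1}\left(\prod_{i=1}^k g_{\omega_i}\right)\prod_{i=1}^k\dt_i,
\end{array}\right.
\]
sending the empty word to the empty formal integral and extended by linearity. By Definition~\ref{def:formal_Chen_int} the space $\A_\Omega$ is spanned precisely by these Chen integrals, and since $G$ is fixed the datum of such an integral is exactly its word $w$ (the integration simplex being determined by the length of $w$). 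Hence $\Phi_G$ is a linear bijection. I would also note, in passing, that $\A_\Omega$ is closed under $\prec$ and $\succ$: each term produced by Definition~\ref{def:dend_structrure} is again a Chen integral over the fully ordered simplex $0<t_1<\dots<t_{k+l}<1$, so the two operations are genuine internal operations on $\A_\Omega$.

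Next I would check that $\Phi_G$ intertwines the two halves of the shuffle. The word products $\prec$ and $\succ$ of Definition~\ref{def:shuffle_words} admit a non-inductive description in terms of $\Sh(k,l)$, obtained by splitting each shuffle according to whether its first output position is filled from $w$ (that is, $\sigma^{-1}(\{1\})=\{1\}$) or from $w'$ (that is, $\sigma^{-1}(\{1\})=\{k+1\}$); this is the shuffle analogue of the non-inductive formula stated after Definition~\ref{def:quasi_shuffle_words}. The products on $\A_\Omega$ in Definition~\ref{def:dend_structrure} are defined by the very same index sets of shuffles, so comparing the two sums term by term gives
\[
\Phi_G(w\shuffle w')=\Phi_G(w)\shuffle\Phi_G(w'),\qquad \Phi_G(w\succ w')=\Phi_G(w)\succ\Phi_G(w'),\qquad \Phi_G(w\prec w')=\Phi_G(w)\prec\Phi_G(w').
\]
The only bookkeeping is to match the reindexing $\omega_{\sigma^{-1}(i)}$ appearing in $\A_\Omega$ with the letter rearrangement produced by the word shuffle, which is immediate from the definitions.

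Finally, since $(\calW_\Omega,\prec,\succ)$ is a dendriform algebra (the proposition on words recalled above) and $\Phi_G$ is a linear bijection preserving both $\prec$ and $\succ$, the three defining relations of Definition~\ref{defi:dend} transport verbatim from $\calW_\Omega$ to $\A_\Omega$: given any instance of such a relation in $\A_\Omega$, one applies $\Phi_G^{-1}$, invokes the relation in $\calW_\Omega$, and applies $\Phi_G$ back to recover the desired identity. This establishes that $(\A_\Omega,\prec,\succ)$ is a dendriform algebra. I do not expect any genuine obstacle; as the authors already indicate, this argument is strictly easier than the tridendriform case, the single point requiring care being the term-by-term matching of the shuffle sums under $\Phi_G$, everything else being formal.
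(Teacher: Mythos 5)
Your proposal is correct and follows essentially the same route as the paper: the paper's proof of this proposition simply says to apply the same strategy as Proposition~\ref{Prop:tridend_Suum}, which is exactly the transport argument via the bijection you call $\Phi_G$ (the analogue of the paper's $\Phi_F$). Your write-up just makes explicit the term-by-term matching of the shuffle sums that the paper leaves implicit.
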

	\begin{proof}
	A similar proof strategy to proposition~\ref{Prop:tridend_Suum} proves the proposition.
	\end{proof}
	\begin{Prop}\label{lem:gen_dend}
		%Attention cohérence des notations
		Let $\Omega$ be a set and $G:\Omega \rightarrow \R^{\R}$. Then, $(\A_{\Omega},\prec,\succ)$ is generated as a dendriform algebra by the following set:
		\[
		\left\{  \int_{0}^1 g_{\omega}\dt\right\}_{\omega\in\Omega}.
		\]
	\end{Prop}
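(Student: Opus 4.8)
The plan is to mimic the deduction of Proposition~\ref{lem:gen_tridend} from the tridendriform case: I would transport the statement along a dendriform isomorphism between words and Chen integrals, the exact analogue of the bijection $\Phi_F$ built in the proof of Proposition~\ref{Prop:tridend_Suum}. Concretely, I would introduce the linear map
\[
\Phi_G:\left\{\begin{array}{rcl} \calW_\Omega & \rightarrow & \A_\Omega,\\ \omega_1\dots\omega_k & \mapsto & \displaystyle\int_{0<t_1<\dots<t_k<1}\left(\prod_{i=1}^k g_{\omega_i}\right)\prod_{i=1}^k\dt_i, \end{array}\right.
\]
sending the empty word to the empty integral. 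By definition~\ref{def:formal_Chen_int} it is a linear bijection onto $\A_\Omega$ (distinct words give distinct tensor factors $\otimes w$, hence linearly independent formal integrals). Comparing definition~\ref{def:dend_structrure} with the word shuffle of definition~\ref{def:shuffle_words} shows that $\Phi_G$ intertwines the two $\prec$ products and the two $\succ$ products, since in both cases these are indexed by the same subsets of $\Sh(k,l)$ singled out by the condition on $\sigma^{-1}(\{1\})$. Hence $\Phi_G$ is an isomorphism of dendriform algebras carrying each letter $\omega$ to $\int_0^1 g_\omega\,\dt$.

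It then suffices to prove that $(\calW_\Omega,\prec,\succ)$ is generated as a dendriform algebra by the length-one words $\{\omega\}_{\omega\in\Omega}$, for applying $\Phi_G$ will immediately yield the claimed generating set for $\A_\Omega$. For this I would use the inductive description of $\prec$ in definition~\ref{def:shuffle_words}: for a single letter $\omega$ and any word $w=\omega_2\dots\omega_k$ one has $\omega\prec w=\omega(\emptyset\shuffle w)=\omega w$, so $\prec$-multiplication against a letter on the left simply prepends that letter. Iterating this identity gives
\[
\omega_1\prec\bigl(\omega_2\prec(\cdots\prec\omega_k)\cdots\bigr)=\omega_1\omega_2\dots\omega_k,
\]
so every word is recovered from its letters using $\prec$ alone, and a fortiori the letters generate $\calW_\Omega$ as a dendriform algebra.

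Combining the two steps, the images $\Phi_G(\omega)=\int_0^1 g_\omega\,\dt$ generate $\A_\Omega$, which is the assertion. I do not expect a genuine obstacle here: the construction is lighter than its tridendriform counterpart because the middle product is absent. The only points needing care are verifying that $\Phi_G$ is truly a dendriform morphism (a direct comparison of the permutation-indexed formulas, identical in spirit to the argument for $\Phi_F$) and tracking the unit and empty-word conventions of the augmented structure of equation~\eqref{eq:augm}, so that the base case $\omega\prec\omega'=\omega\omega'$ and the recursion align correctly with the definitions.
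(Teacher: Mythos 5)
Your proposal is correct and follows essentially the same route as the paper: the paper proves the tridendriform analogue (proposition~\ref{lem:gen_tridend}) by transporting the fact that words are generated by their letters along the isomorphism $\Phi_F$, and then declares the dendriform case "similar", which is exactly your $\Phi_G$ argument. You merely supply more detail than the paper does, in particular the explicit right-nested identity $\omega_1\prec(\omega_2\prec(\cdots\prec\omega_k)\cdots)=\omega_1\omega_2\dots\omega_k$ showing that letters generate $\calW_\Omega$ under $\prec$ alone.
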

	\begin{proof}
		The proof is similar to proposition~\ref{lem:gen_tridend}.
	\end{proof}
	Finally, we have the dendriform version of proposition~\ref{prop:ev_alg_morph}, which is proved in exactly the same fashion:
	\begin{Prop}
	 $\FI^{\rm conv}_\Omega$ is a dendriform subalgebra of $\A_\Omega$ and ev is an algebra morphism for the product $\shuffle$.
	\end{Prop}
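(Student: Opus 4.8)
The plan is to follow the two-part structure of the proof of proposition~\ref{prop:ev_alg_morph}, treating separately the combinatorial closure claim and the analytic morphism claim. Throughout I take the relevant case $\Omega=\{x,y\}$ with $G$ the function of equation~\eqref{eq:G_function}, for which the convergence notion of definition~\ref{defi:ev_map_dend} is defined.

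First I would show that $\FI^{\rm conv}_\Omega$ is a dendriform subalgebra of $\A_\Omega$, that is, that it is closed under $\prec$ and $\succ$ (hence under $\shuffle=\prec+\succ$) and contains the unit. Let $\alpha,\beta\in\FI^{\rm conv}_\Omega$ be attached to convergent words $w=\omega_1\cdots\omega_k$ and $w'=\omega_{k+1}\cdots\omega_{k+l}$, so that (when nonempty) $\omega_1=\omega_{k+1}=x$ and $\omega_k=\omega_{k+l}=y$. By definition~\ref{def:dend_structrure}, every word occurring in $\alpha\prec\beta$, $\alpha\succ\beta$ and thus in $\alpha\shuffle\beta$ has the form $\omega_{\sigma^{-1}(1)}\cdots\omega_{\sigma^{-1}(k+l)}$ for some $\sigma\in\Sh(k,l)$. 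Since $\sigma$ is a shuffle, its first letter $\omega_{\sigma^{-1}(1)}$ is either $\omega_1$ or $\omega_{k+1}$, both equal to $x$, and its last letter $\omega_{\sigma^{-1}(k+l)}$ is either $\omega_k$ or $\omega_{k+l}$, both equal to $y$. Hence every resulting word again begins with $x$ and ends with $y$, so it is convergent. Together with the unit conventions of equation~\eqref{eq:augm}, this establishes that $\FI^{\rm conv}_\Omega$ is closed for $\prec$ and $\succ$, hence is a dendriform subalgebra.

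Second I would prove that $\ev$ is an algebra morphism for $\shuffle$. For convergent $\alpha,\beta$ as above, $\ev(\alpha\shuffle\beta)$ is by definition the sum over all $\sigma\in\Sh(k,l)$ of the iterated integral of $\prod_{i=1}^{k+l} f_{\omega_{\sigma^{-1}(i)}}$ over the simplex $0<t_1<\cdots<t_{k+l}<1$. This is exactly the content of Chen's lemma on iterated integrals~\cite{chen1977iterated}: applying Fubini to the product $\ev(\alpha)\cdot\ev(\beta)$ of iterated integrals over the simplices $0<t_1<\cdots<t_k<1$ and $0<t_{k+1}<\cdots<t_{k+l}<1$, and partitioning (up to a negligible set) the product of these simplices into the $\binom{k+l}{k}$ chambers indexed by $\Sh(k,l)$, yields precisely that sum. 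Thus $\ev(\alpha\shuffle\beta)=\ev(\alpha)\cdot\ev(\beta)$, which is the second equality of equation~\eqref{eq:shuffle_stuffle_zeta}.

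The main obstacle is the analytic identity underlying the second step, namely justifying the chamber decomposition of the product of iterated integrals together with the interchange of integrations and the treatment of the convergent improper integrals near $t=0$ and $t=1$. This is, however, the classical Chen lemma, and since the product $\shuffle$ of definition~\ref{def:dend_structrure} was constructed precisely so that $\ev$ is multiplicative, no genuinely new computation is required: the statement is a reformulation of the well-known shuffle relation for $\zeta_{\Int}$.
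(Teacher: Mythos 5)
Your proposal is correct and follows essentially the same route as the paper, which simply transports the proof of proposition~\ref{prop:ev_alg_morph}: closure of convergent words under the shuffle products (here checking both the first letter $x$ and the last letter $y$, as you do) plus the classical Chen/shuffle relation for iterated integrals, viewed as a reformulation of the second equality in equation~\eqref{eq:shuffle_stuffle_zeta}. Your write-up is in fact slightly more detailed than the paper's one-line reference, but there is no difference in substance.
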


	 \begin{defi}[\MZV{} integral representation] \label{def:formal_MZV}
		We define the following linear map ${\zeta_{\FI}:\Wxycv \rightarrow \A_{\{x,y\}}}$ for any word $w=\omega_1\dots \omega_k\in \Wxy$ by:
		\[
		\zeta_{\FI}(w)=\int_{0< t_1 < \dots < t_k < 1}\left( \prod_{i=1}^k g_{\omega_i}\right)\prod_{i=1}^k \dt_i
		\]
		and $\zeta_{\FI}(\emptyset):=\emptyset\otimes\emptyset\times G$. 
		An element in the image of $\zeta_{\FI}$ is called a \emph{formal integral \MZV{}}.
	\end{defi}
	Hence, the \emph{classical integral \MZV{}} of proposition~\ref{Prop:integral_representation_classic} satisfies:
	\[
	\zeta_{\Int}=\ev\circ \zeta_{\FI}.
	\] 
	
As a first advantage of the formal integral version of \MZVs{}, we have that $\zeta_{\FI}$ is a morphism of dendriform algebra. This is the dendriform counterpart to proposition \ref{Lem:zeta_morph_tridend}.
	\begin{Prop} \label{prop:formal_MZV_dend_map}
		The map $\zeta_{\FI}:\Wxy\rightarrow \A_{\{x,y\}}$ is a morphism of dendriform algebras.
	\end{Prop}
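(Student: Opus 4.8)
The plan is to mirror the proof of Proposition~\ref{Lem:zeta_morph_tridend}, exploiting that on all of $\Wxy$ the map $\zeta_{\FI}$ is nothing but the linear bijection
\[
\Phi_G:\omega_1\dots\omega_k\longmapsto \int_{0<t_1<\dots<t_k<1}\left(\prod_{i=1}^k g_{\omega_i}\right)\prod_{i=1}^k\dt_i
\]
sending a word of length $k$ to the corresponding formal Chen integral over the ordered simplex, and that the dendriform products on both $\Wxy$ and $\A_{\{x,y\}}$ are governed by the very same indexing set $\Sh(k,l)$ with the same first-letter constraint on $\sigma^{-1}(\{1\})$. Just as for the bijection $\Phi_F$ used in Proposition~\ref{Prop:tridend_Suum}, this structural parallel is what makes $\zeta_{\FI}$ a morphism. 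I would also record that $\zeta_{\FI}(\emptyset)=\emptyset\otimes\emptyset\times G$ is the unit of $\A_{\{x,y\}}$, so the augmented-morphism condition on units holds.

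First I would observe that in both $\Wxy$ and $\A_{\{x,y\}}$ one has $\prec=\succ^{\mathrm{op}}$: for words, $u\prec v$ and $v\succ u$ both collect exactly the shuffles whose first letter is the first letter of $u$ (equivalently $\sigma^{-1}(\{1\})=\{1\}$), and the formal-integral products of Definition~\ref{def:dend_structrure} are selected by the identical condition. Hence it is enough to verify that $\zeta_{\FI}$ is a morphism for $\prec$ alone; the case of $\succ$, and therefore of $\shuffle=\prec+\succ$, then follows by symmetry. Writing $u=\omega_1\dots\omega_k$ and $v=\omega_{k+1}\dots\omega_{k+l}$, I would expand $u\prec v$ through the non-inductive shuffle description (the shuffle analogue of the companion lemma to Definition~\ref{def:quasi_shuffle_words}),
\[
u\prec v=\sum_{\substack{\sigma\in\Sh(k,l)\\ \sigma^{-1}(\{1\})=\{1\}}}\omega_{\sigma^{-1}(1)}\dots\omega_{\sigma^{-1}(k+l)},
\]
apply $\zeta_{\FI}$ by linearity, and recognise each resulting summand as the formal Chen integral over $\{0<t_1<\dots<t_{k+l}<1\}$ of the integrand $\prod_{i=1}^{k+l}g_{\omega_{\sigma^{-1}(i)}}$. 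This is precisely the summand appearing in $\zeta_{\FI}(u)\prec\zeta_{\FI}(v)$ according to Definition~\ref{def:dend_structrure}, and the two sums range over the identical set of $(k,l)$-shuffles, so the two expressions coincide; extending by bilinearity closes the argument.

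The computation is routine, and the only point requiring care is the bookkeeping of the permutation indices: one must check that the letter $\omega_{\sigma^{-1}(i)}$ placed in position $i$ of the shuffled word corresponds, under $\zeta_{\FI}$, to the factor $g_{\omega_{\sigma^{-1}(i)}}$ attached to the variable $t_i$ in the simplex integral. Since the word product and the integral product are both defined by the same reindexing through $\sigma\in\Sh(k,l)$ subject to the same condition on $\sigma^{-1}(\{1\})$, this matching is automatic — which is exactly why the proof collapses to the structural parallel already exploited in the tridendriform setting, and why (as the paper notes) fewer details are needed here than in the tridendriform case.
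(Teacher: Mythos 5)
Your proposal is correct and follows essentially the same route as the paper's own proof: reduce to the $\prec$ case via $\prec=\succ^{\mathrm{op}}$, expand $v\prec u$ over the shuffles $\sigma\in\Sh(k,l)$ with $\sigma^{-1}(\{1\})=\{1\}$, apply $\zeta_{\FI}$ termwise, and identify the resulting sum of formal Chen integrals with $\zeta_{\FI}(v)\prec\zeta_{\FI}(u)$ from Definition~\ref{def:dend_structrure}. The additional framing via the bijection $\Phi_G$ and the remark on units are harmless refinements of the same argument.
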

	\begin{proof}
		Using the fact that $\prec=\succ^{\textrm{op}}$, it is enough to prove that $\zeta_{\FI}$ is a morphism for $\prec$. Let $v,u$ be two elements of $\Wxy$ such that $v=\omega_1\dots \omega_k$ and $u=\omega_{k+1}\dots \omega_{k+l}$. Then:
		\begin{align*}
			\zeta_{\FI}(v\prec u)&=\sum_{\substack{\sigma\in\Sh(k,l) \\ \sigma^{-1}(1)=1}} \zeta_{\FI}\left(\omega_{\sigma^{-1}(1)}\dots \omega_{\sigma^{-1}(k+l)} \right) \\
			&=\sum_{\substack{\sigma\in\Sh(k,l) \\ \sigma^{-1}(1)=1}} \int_{0<t_{1}<\dots <t_{k+l}<1} \prod_{i=1}^{k+l} f_{\omega_{\sigma^{-1}(i)}}\dt_i \\
			&=\left( \int_{0< t_1< \dots < t_k< 1} \prod_{i=1}^k f_{i}\dt_i \right)\prec \left( \int_{0< t_{k+1}< \dots < t_{k+l}< 1} \prod_{j=k+1}^{k+l} f_{j}\dt_j \right) \\
			&=\zeta_{\FI}(v)\prec \zeta_{\FI}(u).
		\end{align*}
		This concludes the proof.
	\end{proof}
	This proposition indicates that the property of integral representation of \MZVs{} being a morphism for the shuffle product of $\Wxy$ comes from a deeper dendriform structure. Studying the link between the evaluation map and the dendriform structure is beyond the scope of this article since we are not aiming at deriving new relations among \MZVs{} and is therefore left for future research.
	
	\section{\MZVs{} and the dendriform structure}\label{sec:six}
	
	The construction in this section is similar to the one in the tridendriform case section~\ref{sec:three}. In this particular case, the construction is simpler than in the previous one.
	We introduce the combinatorics of the free dendriform algebra. 
	
	\begin{defi}[binary tree]\label{def:binary_tree}
		A \emph{binary tree} is a \Sch{} tree where all of its internal vertices has exactly two children. We will denote the set of binary trees union $|$ by $\BT$.
		Let us denote by $\BT(\{x,y\})$ the set of binary trees whose angles are decorated by an element of $\{x,y\}$.  
	\end{defi}
	
	\begin{Rq}
		Given a binary tree, a labelling of its angles is equivalent to do a labelling of its internal vertices.  
	\end{Rq}
	
	\begin{Eg} As before, we write $\BT_n$ (resp. $\BT_n(\{x,y\})$) the set of binary trees with $n+1$ leaves (resp. decorated by $\{x,y\}$).
		Here are some examples of binary trees:
		\[
		\BT_0=\{|\}, \quad \BT_1=\left\{\labY{}\right\},\quad \text{and } \BT_2=\left\{\labbalaisd{}{}, \labbalaisg{}{}\right\}.
		\]
		Moreover, we list here all the trees of $\BT(\{x,y\})$ with at most $3$ leaves:
		\begin{gather*}
			\BT_{0}(\{x,y\})=\{|\},\quad \BT_{1}(\{x,y\})=\left\{\labY{x},\labY{y}\right\}, \text{ and } \\
			 \BT_{2}(\{x,y\})=\left\{\labbalaisd{x}{x}, \labbalaisd{y}{y}, \labbalaisd{x}{y}, \labbalaisd{y}{x}, \labbalaisg{x}{x}, \labbalaisg{y}{y}, \labbalaisg{x}{y}, \labbalaisg{y}{x}\right\}.
		\end{gather*}
	\end{Eg}
	We recall the notations for \Sch{} trees that are still valid for binary trees.

\begin{Not}
	Let $t$ be a tree in $\Dend(\Omega)$. We denote by $\nu(t)$ the set of internal vertices of $t$. 
	We also define the map $d:\nu(t)\rightarrow \Omega$ such that $v$ is decorated by $d_v.$
\end{Not}
		
	The free dendriform algebra over $\{x,y\}$  has for underlying vector $\K\BT(\{x,y\})$. The products are defined via an action of the shuffles over the combs representation of these trees, see~\cite{Catoire_23} in the case where $\cdot=0$. The combinatorial description is a simpler version of the one in the free tridendriform algebra involving only \emph{shuffles} introduced in definition~\ref{defi:qsh_sh2}.
	Let $t,s$ be two elements of $\BT(\{x,y\})$ $|$. We see $t$ as a right comb and $s$ as a left comb. In other words, we put:
	\begin{center}
		\begin{align}\label{eq:tree_shape_dend}
			t&=\peignedroitdec{F_1}{F_2}{F_k}{w_1}{w_2}{w_k}
			& \text{ and } &&  s=\peignegauchedec{F_{k+1}}{F_{k+2}}{F_{k+l}}{w_{k+1}}{w_{k+2}}{w_{k+l}},
		\end{align}
	\end{center}
	where for all $i\in\IEM{1}{k+l}, F_i$ is a binary tree and $w_i$ is the label of the interval vertex on which $F_i$ is grafted. Restricting ourselves to shuffles in definition~\ref{def:quasiaction}, we have an action of shuffles on any pair $(t,s).$ As a consequence, we also have the analogous of theorem~\ref{thm:produit} in the dendriform case:
	\begin{thm}[\cite{Catoire_23}]\label{thm:produit_dend}
		Let $(t,s)\in\BT(\{x,y\})^2$ different from $|$ as in equation~\eqref{eq:tree_shape_dend}. Then for any set, $\BT(\{x,y\})$ has the structure of a dendriform algebra given by: 
		\[
		t\star s=\sum_{\sigma\in \Sh(k,l)}\sigma(t,s).
		\]
		Moreover:
		\begin{align*}
			t\prec s=\sum_{\substack{\sigma\in\Sh(k,l) \\ \sigma^{-1}(\{1\})=\{1\}}} \sigma(t,s), &&
			t\succ s=\sum_{\substack{\sigma\in\Sh(k,l) \\ \sigma^{-1}(\{1\})=\{k+1\}}} \sigma(t,s).
		\end{align*}
	\end{thm}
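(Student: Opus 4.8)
The plan is to realise the dendriform algebra of the statement as a quotient of the tridendriform algebra of Schroeder trees from theorem~\ref{thm:produit}, exploiting that a shuffle is exactly a \qsh{} that is bijective. Concretely, a binary tree is a Schroeder tree all of whose internal vertices carry a single angle (equivalently, are decorated by a one-letter word), so $\BT(\{x,y\})$ sits inside $\Schtree(\{x,y\})$ as a subset of the basis. Let $N\subseteq\K\Schtree(\{x,y\})$ be the subspace spanned by the \emph{non-binary} trees, i.e. those having at least one internal vertex with three or more children. Since $\Sh(k,l)=\{\sigma\in\QSh(k,l)\mid\max(\sigma)=k+l\}$ is precisely the set of bijective $(k,l)$-\qsh{}, I would first record that for two binary trees $t,s$ the binary terms appearing in the tridendriform products $t\prec s$ and $t\succ s$ of theorem~\ref{thm:produit} are exactly the terms indexed by shuffles, while every term of $t\cdot s$ is non-binary (its index has $\sigma^{-1}(\{1\})=\{1,k+1\}$, forcing a merge at the bottom node).

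The heart of the argument is a single combinatorial observation about definition~\ref{def:quasiaction}: the operation $\sigma(t,s)$ never decreases the number of children of any vertex. Indeed, the grafted forests $F_i$ keep their shape, and each ladder node is decorated either by some $w_i$ (same valence as before) or by a concatenation $w_iw_j$ with $i\leq k<j$ (strictly larger valence, coming from a merge $\sigma^{-1}(\{m\})=\{i,j\}$ of size two). Hence non-binarity is preserved by the action, and it is contagious: if either argument is non-binary, then every resulting tree is non-binary. This shows that $N$ is a two-sided ideal for each of $\prec,\succ,\cdot$. Combined with the previous point, it follows that $\K\Schtree(\{x,y\})/N\cong\K\BT(\{x,y\})$ inherits a tridendriform structure whose induced left and right products are the shuffle products $\prec,\succ$ of the statement and whose induced middle product is $0$, since $t\cdot s\in N$ for all binary $t,s$.

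Finally, I would descend the seven relations of definition~\ref{defi:tridend}, valid in $\Schtree(\{x,y\})$ by theorem~\ref{thm:produit}, to the quotient: a tridendriform algebra modulo a tridendriform ideal is again tridendriform, and since the induced middle product vanishes the relations \eqref{eq:tri4}--\eqref{eq:tri7} become trivial and only the three dendriform relations of definition~\ref{defi:dend} survive, with $\star=\prec+\succ$. This proves that $(\BT(\{x,y\}),\prec,\succ)$ is a dendriform algebra with $t\star s=\sum_{\sigma\in\Sh(k,l)}\sigma(t,s)$. The main obstacle is the verification that $N$ is genuinely an ideal: one must check carefully, following the grafting rules of definition~\ref{def:quasiaction}, that a vertex of valence $\geq 3$ is preserved whether it lies inside a grafted forest $F_i$ or on the comb branch that becomes the central ladder, including the case where it is further merged. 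As an alternative route avoiding the quotient, one may instead establish the dendriform analogue of lemma~\ref{Lem:induction_product} (with $\cdot$ absent and $\star$ in place of $*$) and verify the three relations by induction on the total number of internal vertices, exactly as in the tridendriform case, which the paper indicates is both similar and lighter.
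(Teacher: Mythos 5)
Your proof is correct, but note that the paper does not actually prove theorem~\ref{thm:produit_dend}: it is quoted from~\cite{Catoire_23}, with the surrounding text indicating that the intended argument is simply the shuffle analogue of the combinatorial construction already given for theorem~\ref{thm:produit} (restricting definition~\ref{def:quasiaction} to the bijective quasi-shuffles), verified directly or by the inductive route via the dendriform analogue of lemma~\ref{Lem:induction_product} --- essentially your ``alternative route''. Your main argument is genuinely different and, in my view, cleaner: you realise $\K\BT(\{x,y\})$ as the quotient of the tridendriform algebra $\K\Schtree(\{x,y\})$ of theorem~\ref{thm:produit} by the span $N$ of non-binary trees. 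The key points all check out: a $(k,l)$-shuffle is exactly a bijective $(k,l)$-\qsh{}; in $\sigma(t,s)$ a ladder node carries the upward ladder edge plus the grafted forests, so for binary inputs a bijective $\sigma$ yields a binary tree while any merged node $\sigma^{-1}(\{m\})=\{i,j\}$ acquires at least three children (in particular every term of $t\cdot s$ is non-binary); and a vertex of valence $\geq 3$, whether inside a grafted $F_i$ or on the comb branch, survives into $\sigma(t,s)$, so $N$ is an ideal for all three products. Since the seven relations of definition~\ref{defi:tridend} are multilinear they descend to the quotient, where $\cdot=0$ kills relations \eqref{eq:tri4}--\eqref{eq:tri7} and turns $*$ into $\star=\prec+\succ$ in the remaining three. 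What your approach buys is that nothing needs to be re-verified beyond the ideal property: the dendriform axioms are inherited wholesale from the tridendriform ones. What the direct approach buys is independence from theorem~\ref{thm:produit} (it is how the free dendriform algebra is usually presented, as in~\cite{Ronco00,Loday_Ronco1998}) and it avoids the one genuinely delicate check in your argument, namely that non-binarity is preserved under the grafting of definition~\ref{def:quasiaction}, which you have correctly identified as the crux and handled adequately.
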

	Moreover, it also fulfils a similar lemma to lemma~\ref{Lem:induction_product}:
	\begin{Lemme}\label{Lem:induction_product_dend}
		Let $t$ and $s$ be two trees of $\BT(\{x,y\})$ different from $|$ with:
		\begin{align*}
			t=t^{(1)}\vee_{\omega_1} \dots \vee_{\omega_{k-1}} t^{(k)}, &&
			s=s^{(1)} \vee_{\mu_1} \dots \vee_{\mu_{l-1}} s^{(l)}.
		\end{align*}
		Then:
		\begin{gather*}
			t\prec s=t^{(1)}\vee_{\omega_1} \dots \vee_{\omega_{k-1}} \left(t^{(k)}\star s\right) \text{ and }
			t \succ s= \left(t \star s^{(1)}\right)\vee_{\mu_1} s^{(2)}\vee_{\mu_2} \dots \vee_{\mu_{l-1}} s^{(l)}.
		\end{gather*}
	\end{Lemme}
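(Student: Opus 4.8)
The plan is to mirror the proof of Lemma~\ref{Lem:induction_product}, replacing quasi-shuffles by shuffles throughout. Two essentially equivalent routes are available. The most economical one exploits the inclusion $\BT(\{x,y\})\subseteq\Schtree(\{x,y\})$: a binary tree is in particular a Schroeder tree, and the shuffle action of definition~\ref{def:quasiaction} restricted to $\Sh(k,l)\subseteq\QSh(k,l)$ is exactly the quasi-shuffle action in which no two positions are merged. Since a bijective $\sigma$ never has a fibre of cardinality two, every ``diagonal'' term disappears; hence the middle product $\cdot$ contributes nothing and the associative product $*$ of Theorem~\ref{thm:produit} collapses to the shuffle product $\star$ of Theorem~\ref{thm:produit_dend}. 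Granting this identification, the two claimed formulas become the specialisation of Lemma~\ref{Lem:induction_product} to binary trees, with $*$ read as $\star$, which is why the argument is shorter here.

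Alternatively, I would argue directly from the non-inductive description of Theorem~\ref{thm:produit_dend}. For $\prec$, the constraint $\sigma^{-1}(\{1\})=\{1\}$ in the sum $\sum_{\sigma}\sigma(t,s)$ forces the bottom node of the ladder built in definition~\ref{def:quasiaction} to receive $F_1$ (that is, $t^{(1)}$) as its left son, carrying the decoration $\omega_1$. The values of $\sigma$ on the remaining positions $\{2,\dots,k\}$ coming from $t$ and $\{k+1,\dots,k+l\}$ coming from $s$ then range over \emph{all} shuffles of these two blocks, so that the subtree sitting above the frozen spine of $t$ is precisely $t^{(k)}\star s$ by Theorem~\ref{thm:produit_dend}; the remaining spine decorations $\omega_1,\dots,\omega_{k-1}$ are untouched. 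This gives $t\prec s=t^{(1)}\vee_{\omega_1}\dots\vee_{\omega_{k-1}}(t^{(k)}\star s)$. The identity for $\succ$ follows by the symmetric argument, grafting the blocks of $s$ as right sons onto a frozen top node coming from $s$, or equivalently from $\prec\,=\,\succ^{\mathrm{op}}$ applied to the formula just obtained.

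The only genuinely delicate point, and the step I expect to be the main obstacle, is the combinatorial bookkeeping identifying the restricted shuffle action $\sigma(t,s)$ with the stated comb decomposition: one must check that freezing the bottom node leaves the remaining grafting data in bijection with the shuffles governing $t^{(k)}\star s$, that bijectivity of $\sigma$ guarantees no merging occurs, and hence that the resulting tree is again binary and the product appearing in the distinguished slot is $\star$ rather than $*$. Once this correspondence is set up carefully, both equalities are immediate; all the remaining verifications are the same as in the tridendriform case with the middle product suppressed.
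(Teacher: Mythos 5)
The paper does not actually prove this lemma: it is stated without proof, introduced only as the dendriform analogue of lemma~\ref{Lem:induction_product}, which is itself unproved in the paper (both descriptions of the products on trees are imported from~\cite{Catoire_23}). So there is no in-paper argument to compare against, and your proposal supplies a proof where the authors supply none. Your second route --- reading off the inductive formulas directly from the non-inductive description of theorem~\ref{thm:produit_dend} by splitting off the position constrained by $\sigma^{-1}(\{1\})$ and observing that the remaining positions range over all $(k'-1,l')$- (resp.\ $(k',l'-1)$-) shuffles --- is correct and is the natural way to establish the statement. Two caveats. First, your route via ``specialising'' lemma~\ref{Lem:induction_product} is weaker than you suggest: the dendriform products on $\BT(\{x,y\})$ are \emph{not} the restrictions of the tridendriform products on $\Schtree(\{x,y\})$ (the non-bijective quasi-shuffles produce non-binary trees with merged angle-words, so $\BT(\{x,y\})$ is not closed under the tridendriform $\prec$, $\succ$, $\cdot$); one genuinely needs to check that the index bijection underlying lemma~\ref{Lem:induction_product} restricts to bijective $\sigma$ on both sides --- you flag exactly this as the delicate point, which is right, but it means route one is not a free lunch and route two is the proof. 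Second, your description of the $\succ$ case is slightly off: the frozen node is again the \emph{bottom} node of the ladder (the new root), coming from the root of $s$; it receives $F_{k+1}=(s^{(2)},\dots,s^{(l)})$ --- not $s^{(1)}$ --- as its right sons with decorations $\mu_1,\dots,\mu_{l-1}$, while $s^{(1)}$ feeds the rest of the ladder to produce $t\star s^{(1)}$ as the leftmost child. The conclusion is unaffected, and your fallback via $\prec\,=\,\succ^{\mathrm{op}}$ also repairs it, but as written that sentence does not describe the correct tree.
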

		
	The freeness property of $\Dend(\{x,y\})$, similar to the one of $\Tree(\Omega)$ stated in theorem~\ref{thm:univ_prop}, is given below.
		\begin{thm}[\cite{Ronco00,Loday_Ronco1998}] \label{thm:univ_prop_dend}
			The structure $(\Dend(\{x,y\}),\prec,\succ)$ is the free dendriform algebra over $\{x,y\}$: for any dendriform algebra $A$ and any map $L:\{x,y\}\longrightarrow A$, there exists a unique morphism of dendriform algebras $\Phi:\Dend(\{x,y\})\longrightarrow A$ such that the diagram below commutes, with ${i:\{x,y\}\longrightarrow \Dend(\{x,y\})}$ the canonical embedding defined by $i(z)\coloneqq\labY{z}$ for $z\in\{x,y\}$:
		\begin{figure}[ht]
			\centering 
				\begin{tikzcd}
					\{x,y\} \arrow[rd, "L"] \arrow[r, "i"] & \Dend(\{x,y\}) \arrow[d, "\Phi"] \\
					& A
				\end{tikzcd}.
		\end{figure}
	\end{thm}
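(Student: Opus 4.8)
The plan is to prove the universal property in the two standard steps. First I would establish uniqueness, which reduces to showing that $\Dend(\{x,y\})$ is generated as a dendriform algebra by the two corollas $i(x)=\labY{x}$ and $i(y)=\labY{y}$; then I would construct $\Phi$ explicitly by recursion on the number of internal vertices and check that it is a dendriform morphism. Throughout I would use the augmented conventions of equation~\eqref{eq:augm}, so that the empty tree $|$ acts as the unit $1$.

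The structural heart of the argument is the identity, valid inside $\Dend(\{x,y\})$ for every binary tree $t\ne|$ written as $t=t_\ell\vee_\omega t_r$ (root decorated by $\omega$, left and right subtrees $t_\ell,t_r$),
\[
t_\ell\vee_\omega t_r=t_\ell\succ\labY{\omega}\prec t_r,
\]
where the right-hand side is unambiguous thanks to the dendriform relation~\eqref{eq:tri2}. I would prove it from the binary specialisation of Lemma~\ref{Lem:induction_product_dend}, namely $u\prec v=u_\ell\vee_\omega(u_r\star v)$ and $u\succ v=(u\star v_\ell)\vee_\mu v_r$ for $u=u_\ell\vee_\omega u_r$, $v=v_\ell\vee_\mu v_r$: applying it twice gives $\labY{\omega}\prec t_r=|\vee_\omega(|\star t_r)=|\vee_\omega t_r$ and then $t_\ell\succ(|\vee_\omega t_r)=(t_\ell\star|)\vee_\omega t_r=t_\ell\vee_\omega t_r$, using $|\star w=w=w\star|$. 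A straightforward induction on the number of internal vertices then shows that $\labY{x}$ and $\labY{y}$ generate $\Dend(\{x,y\})$, so any dendriform morphism agreeing with $L$ on $\{x,y\}$ is forced on every tree, giving uniqueness.

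For existence I would define $\Phi$ on the basis of binary trees by $\Phi(|):=1$, $\Phi(\labY{z}):=L(z)$, and recursively
\[
\Phi(t_\ell\vee_\omega t_r):=\Phi(t_\ell)\succ L(\omega)\prec\Phi(t_r),
\]
which is unambiguous by~\eqref{eq:tri2} in $A$, and extend by linearity; the remaining task is to verify $\Phi(u\prec v)=\Phi(u)\prec\Phi(v)$ and $\Phi(u\succ v)=\Phi(u)\succ\Phi(v)$, which by linearity also yields compatibility with $\star$. I would induct on $|\nu(u)|+|\nu(v)|$, the base cases being when $u$ or $v$ equals $|$. For the step, writing $u=u_\ell\vee_\omega u_r$ and $a:=\Phi(u_\ell)\succ L(\omega)$ so that $\Phi(u)=a\prec\Phi(u_r)$, the binary lemma gives $u\prec v=u_\ell\vee_\omega(u_r\star v)$, whence $\Phi(u\prec v)=a\prec\Phi(u_r\star v)=a\prec(\Phi(u_r)\star\Phi(v))$ by the induction hypothesis, while $\Phi(u)\prec\Phi(v)=(a\prec\Phi(u_r))\prec\Phi(v)=a\prec(\Phi(u_r)\star\Phi(v))$ by relation~\eqref{eq:tri1}; the two agree. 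The $\succ$ case is symmetric, writing $v=v_\ell\vee_\mu v_r$ and using relation~\eqref{eq:tri3} in place of~\eqref{eq:tri1}.

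The main obstacle is exactly this final induction: it requires interleaving the dendriform relations of the target $A$ with the inductive product formulas, and—more delicately—handling the degenerate cases where a subtree is the empty tree $|$, since $\prec$ and $\succ$ are only partially defined on the unit (cf. the remark following definition~\ref{def:quasi_shuffle_words}). Once this bookkeeping is settled the argument closes cleanly. Alternatively, one could deduce the statement from the tridendriform universal property of theorem~\ref{thm:univ_prop} by specialising to $\cdot=0$ and identifying binary trees with the Schroeder trees all of whose internal vertices have exactly two children.
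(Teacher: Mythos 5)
The paper offers no proof of this statement: it is quoted verbatim from the literature (Loday--Ronco), so there is nothing internal to compare against. Your reconstruction is the standard freeness argument and it is essentially correct. The pivotal identity $t_\ell\vee_\omega t_r=t_\ell\succ\labY{\omega}\prec t_r$ does follow from the binary case of Lemma~\ref{Lem:induction_product_dend} together with the augmented conventions ($1\star w=w=w\star 1$, and the middle factor $\labY{\omega}$ is never the unit, so no forbidden expression $1\prec 1$ or $1\succ 1$ arises); uniqueness then follows from generation by the corollas, and your inductive verification that the recursively defined $\Phi$ respects $\prec$ and $\succ$ correctly invokes the first and third dendriform axioms in the target $A$ (one should state the induction hypothesis simultaneously for $\prec$ and $\succ$ so that it applies to $\star$, as you implicitly do). The one point I would push back on is the closing ``alternative'': deducing the dendriform statement from Theorem~\ref{thm:univ_prop} by ``specialising to $\cdot=0$'' is not immediate, because the subspace of $\Schtree(\{x,y\})$ spanned by binary trees is \emph{not} closed under the tridendriform $\prec$ and $\succ$ (quasi-shuffles with non-injective $\sigma$ produce non-binary trees); the dendriform products on $\BT(\{x,y\})$ use only genuine shuffles, so this route requires identifying the free dendriform algebra with a quotient of the free tridendriform one, which is extra work rather than a shortcut. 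As a proof of the cited classical theorem, your main line of argument stands.
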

	 Applying the freeness property of $\Dend(\{x,y\})$ to the dendriform algebra $\calA_{\{x,y\}}$, we obtain dendriform zeta values:
	\begin{defi}\label{def:dend_zeta}
		Let $\mathfrak{G}: \{x,y\} \rightarrow \A_{\{x,y\}}$ be the map defined by $z \mapsto \int_0^1 g_z\dt$ where $g_z$ are chosen in equation~\eqref{eq:G_function}. Then the \emph{dendriform zeta values} is the unique morphism of dendriform algebras $\zeta^{\Dend}:\Dend(\{x,y\})\rightarrow \A_{\{x,y\}}$ whose existence and unicity is given by the universal property of $\Dend(\{x,y\})$ stated in theorem~\ref{thm:univ_prop_dend} for $L=\mathfrak{G}$.
	\end{defi}
	Let us recall that $\A_{\{x,y\}}$ is a space of \emph{formal} integrals, as the lack of the integration parameter in $\int_0^1 g_z\dt$ indicates. Thus, the reader should not be concerned with the fact that the integrals $\int_0^1 g_z(t)\dt$ do not converge.
	
	Now, by definition, $\zeta^{\Dend}$ is a morphism of dendriform algebras, therefore it is also a morphism for the shuffle product $\star$ in $\BT(\{x,y\})$ which is explicitly described in theorem~\ref{thm:produit_dend}:
	\[
	\forall (t,s)\in\BT(\{x,y\}), \zeta^{\Dend}(t\star s)=\zeta^{\Dend}(t)\cdot\zeta^{\Dend}(s).
	\]
	Hence, we have build (once again, after using the evaluation map of definition~\ref{defi:ev_map_dend}) a new generalisation of integral \MZVs{} which is a morphism for a generalisation of the shuffle products of words.
	In the rest of the section, we will aim at relating $\zeta^{\Dend}$ to other known objects, and in particular integral \AZVs{} and integral \MZVs{}. This will also solve the issue of convergence of dendriform zeta values that we have omitted so far.
	
	\section{From dendriform to usual \MZVs{}} \label{sec:dendri_to_usual}
	
	Unlike in section~\ref{sec:tridend_to_usual}, we are directly working with the adequate objects.
	
	\subsection{Arborified integrals zeta values}
	
	In the literature~\cite{Manchon_16,Ya20}, we can a find a definition of an \AZV{} for the integral representation of \MZV{}. We introduce a similar concept adapted to the trees we are considering:
	
	\begin{defi}[Arborification of the \MZVs{}]\label{def:azvs_integrals}
	Let $\Dendcv$ be the subspace of $\Dend(\{x,y\})$ generated by the tree $t=|$ and trees whose roots are decorated by $x$ and whose leaves are decorated by $y$.
	
		 We introduce the linear map $\zeta_{\Int}^T:\Dendcv  \rightarrow  \R$ such that for any tree $t$ of $\Dendcv$, it is defined by:
		\[
		\zeta_{\Int}^T(t)=\int_{u\in\Delta_t} \prod_{v\in\nu(t)} g_{d_v}(t_v)\dt_v,
		\]
		where $\Delta_t\subseteq \interff{0 1}^{|\nu(t)|}$ stands for:
		\[ 
		\left\{ \left(t_{v_1},\dots, t_{v_{|\nu(t)|}}\right) \,\middle|\, \forall (i,j)\in\IEM{1}{|\nu(t)|}^2, t_{v_i}> t_{v_j} \iff v_i\leq v_j  \right\},
		\] 
		where for any $(u,v)\in\nu(t)^2, u\leq v$ reading the tree as a Hasse diagram where the root is the minimum of the poset.
	\end{defi}
	
		\begin{Eg}\label{Eg:integral_representation_MZV}
		For instance:
		\[
		\zeta_{\Int}^T\left(\labYY{x}{y}{y}\right)=\int_{\substack{0< t_2< t_1< 1, \\ 0< t_3 < t_1< 1}} \frac{\dt_1}{t_1}\frac{\dt_2}{1-t_2}\frac{\dt_3}{1-t_3}.
		\]
	\end{Eg}
	
	\begin{Rq}
	 As in the tridendriform case, we are now working with Schroeder trees instead of non-planar ones and this has no impact on the arborified zeta values that we are studying. A new difference is that these trees have to be binary (in~\cite{Manchon_16,Ya20,clavier2020double} they are not), and their branching vertices do not have to be decorated by $y$, as in~\cite{clavier2020double}. However, one can easily check that the proofs of~\cite{clavier2020double} are still valid in this case: the planar binary Schroeder forests still form an $\{x,y\}$-operated algebra. All the results follow from this simple fact.
	\end{Rq}	

	We adapt the definition of the flattening to our binary trees using notation~\ref{Not:grafting_B}:
	\begin{defi}\label{def:flat_map_dend}
		We define the map $\flaten:\Dend(\{x,y\})\rightarrow \Wxy$ by $\flaten(|)=\emptyset$ and for any $\B^+_z(t_1,t_2)=t$ with $z\in\{x,y\}$ by:
		\[
		\flaten(t)=z\cdot(\flaten(t_1)\shuffle \flaten(t_2)).
		\]
	\end{defi}
	
	Then, we have the following properties of $\zeta^T_{\Int}$
	
	\begin{thm}[\cite{clavier2020double}]\label{thm:known_AZVs_dend}
		 We have $\flaten(\Dendcv)=\Wxycv$ and $\zeta^T_{\Int}$ factorises through the flattening: $\zeta^T_{\Int}=\zeta_{\Int} \circ \flaten$.
	\end{thm}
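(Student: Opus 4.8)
The plan is to establish the two assertions separately, the first by a double inclusion and the second by decomposing the integration domain along the linear extensions of the tree poset.

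\textbf{The equality $\flaten(\Dendcv)=\Wxycv$.} For the inclusion $\flaten(\Dendcv)\subseteq\Wxycv$ I would prove, by induction on $|\nu(t)|$, the following refinement: for every $t=\B^+_z(t_1,t_2)$, each word occurring in $\flaten(t)$ begins with the root decoration $z$ and ends with the decoration of one of the maximal internal vertices of $t$ (the maximal elements of the poset $\nu(t)$). The base case $t=\labY{z}$ gives $\flaten(t)=z$. For the inductive step, the recursion $\flaten(\B^+_z(t_1,t_2))=z\cdot(\flaten(t_1)\shuffle\flaten(t_2))$ shows every word starts with $z$, while the last letter of any shuffle term is the last letter of a word of $\flaten(t_1)$ or of $\flaten(t_2)$, hence by induction the decoration of a maximal internal vertex of a proper subtree, which is again maximal in $t$. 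Specialising to $t\in\Dendcv$, whose root is decorated by $x$ and whose maximal internal vertices are decorated by $y$, gives $\flaten(t)\in\Wxycv$. For the reverse inclusion, given $w=\omega_1\cdots\omega_n\in\Wxycv$ I would take the comb $C_w$ in which one child of every internal vertex is a leaf and the spine is decorated $\omega_1,\dots,\omega_n$ from the root upwards; since $\flaten(t)=z\cdot(\flaten(t_1)\shuffle\emptyset)=z\cdot\flaten(t_1)$ when one branch is trivial, an immediate induction yields $\flaten(C_w)=w$, and $C_w\in\Dendcv$ because its root is $\omega_1=x$ and its unique maximal internal vertex is $\omega_n=y$.

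\textbf{The factorisation.} Reading $\nu(t)$ as a poset with the root as minimum, $\Delta_t$ is the interior of the order polytope $\{(t_v):t_v>t_w\text{ whenever }v<w\}$. The key geometric step is to show that, up to a Lebesgue-negligible set, $\Delta_t=\bigsqcup_\pi S_\pi$, the union being over the linear extensions $\pi$ of $\nu(t)$, where, writing $v_1<\cdots<v_n$ for the total order induced by $\pi$, the piece $S_\pi=\{(t_v):0<t_{v_n}<\cdots<t_{v_1}<1\}$ is an ordinary simplex; the overlaps are the coordinate hyperplanes $t_v=t_w$ and are negligible. On $S_\pi$ the integrand $\prod_{v\in\nu(t)}g_{d_v}(t_v)\dt_v$ is exactly the one defining $\zeta_{\Int}(w_\pi)$ for the word $w_\pi:=d_{v_1}\cdots d_{v_n}$, whence $\int_{S_\pi}\prod_{v}g_{d_v}(t_v)\dt_v=\zeta_{\Int}(w_\pi)$. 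In parallel I would prove the combinatorial identity $\flaten(t)=\sum_\pi w_\pi$ by induction: the linear extensions of the poset obtained by placing the root $z$ below the disjoint union $\nu(t_1)\sqcup\nu(t_2)$ are precisely $z$ followed by an interleaving of a linear extension of $\nu(t_1)$ with one of $\nu(t_2)$, which matches term by term the recursion $\flaten(\B^+_z(t_1,t_2))=z\cdot(\flaten(t_1)\shuffle\flaten(t_2))$ and the description of $\shuffle$ as a sum over interleavings.

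Combining the two steps gives, for $t\in\Dendcv$,
\[
\zeta_{\Int}(\flaten(t))=\sum_\pi\zeta_{\Int}(w_\pi)=\sum_\pi\int_{S_\pi}\prod_{v}g_{d_v}(t_v)\dt_v=\int_{\Delta_t}\prod_{v}g_{d_v}(t_v)\dt_v=\zeta_{\Int}^T(t).
\]
The passage term by term is legitimate because $t$ is convergent: the root is first in every linear extension and a maximal internal vertex is last, so each $w_\pi$ starts with $x$ and ends with $y$, every $\zeta_{\Int}(w_\pi)$ converges, and only finitely many pieces occur. I expect the main obstacle to be the geometric decomposition lemma, namely checking that the order polytope $\Delta_t$ is the essentially disjoint union of the simplices $S_\pi$ and that incomparable vertices are exactly those free to be interleaved; this is where the tree order must be handled with care. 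Once this lemma is in place the rest is bookkeeping, and the whole argument could alternatively be recast as a single induction on $|\nu(t)|$ using the recursion for $\flaten$ together with the fact that $\zeta_{\Int}$ is a shuffle morphism. This is essentially the proof of~\cite{clavier2020double}, which carries over to planar binary Schroeder trees without change.
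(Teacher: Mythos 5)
Your proof is correct. Note, however, that the paper itself gives no proof of this statement: it is imported wholesale from~\cite{clavier2020double}, with only a remark (just above the definition of the flattening) asserting that the arguments there carry over to planar binary Schroeder trees whose branching vertices need not be decorated by $y$. Your argument --- triangulating the order polytope $\Delta_t$ into simplices indexed by linear extensions of $\nu(t)$, matching each simplex with $\zeta_{\Int}(w_\pi)$, and identifying $\flaten(t)=\sum_\pi w_\pi$ via the shuffle recursion --- is the standard proof and is precisely what the cited reference does, so it supplies the argument the paper leaves implicit; you also correctly read the domain $\Delta_t$ in its intended sense (the paper's ``$\iff$'' formulation is literally vacuous and must be taken as ``$v_i< v_j\Rightarrow t_{v_i}>t_{v_j}$''). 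One small caution about your closing aside: recasting the whole thing as a single induction on $|\nu(t)|$ using the fact that $\zeta_{\Int}$ is a shuffle morphism would not go through directly, because the subtrees $t_1,t_2$ of a convergent tree are generally not convergent (their roots may be decorated by $y$), so $\zeta_{\Int}\circ\flaten$ is not defined on them; your actual argument, which works with the full domain $\Delta_t$ at once and only invokes convergence of the words $w_\pi$ for the whole tree, avoids this issue.
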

	
	Finally, we have an important property for our use:
	\begin{Lemme}\label{lem:flat_dend}
		The map $\flaten:\Dendcv \rightarrow \Wxycv$ is a morphism of dendriform algebras.
	\end{Lemme}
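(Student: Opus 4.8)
The plan is to mirror the proof of lemma~\ref{lem:flat_tridend}, specialising it to the dendriform setting where the middle product vanishes, the grafting $\B^+_z$ has exactly two arguments, and the root decorations are single letters of $\{x,y\}$. First I would establish that $\flaten\colon\Dend(\{x,y\})\to\Wxy$ is a morphism for $\prec$ and $\succ$ on the whole spaces, and only afterwards restrict to the convergent subspaces: indeed $\Dendcv$ and $\Wxycv$ are dendriform subalgebras (the latter being noted in the remark following definition~\ref{def:conv_words}) and $\flaten(\Dendcv)=\Wxycv$ by theorem~\ref{thm:known_AZVs_dend}, so the morphism property descends to the restriction claimed in the statement.

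The morphism property would be shown by induction on $v=|\nu(t)|+|\nu(s)|$, the total number of internal vertices. For the base case $v=2$, both trees are one-vertex trees $\labY{a}$ and $\labY{b}$ with $a,b\in\{x,y\}$; since $\flaten(\labY{a})=a$, a direct computation using theorem~\ref{thm:produit_dend} gives $\labY{a}\prec\labY{b}=\labbalaisd{a}{b}$, hence $\flaten(\labY{a}\prec\labY{b})=a\cdot(\emptyset\shuffle b)=ab=a\prec b=\flaten(\labY{a})\prec\flaten(\labY{b})$, and symmetrically for $\succ$.

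For the inductive step, assuming the claim for all pairs of total internal-vertex count at most $v$, I would take binary trees $t=\B^+_a(t_2,t_1)=t_1\vee_a t_2$ and $s=\B^+_b(s_2,s_1)=s_1\vee_b s_2$ with $|\nu(t)|+|\nu(s)|=v+1$, the cases where one tree is $|$ being trivial. Lemma~\ref{Lem:induction_product_dend} then gives $t\prec s=t_1\vee_a(t_2\star s)$; applying definition~\ref{def:flat_map_dend}, the associativity and commutativity of $\shuffle$, and the induction hypothesis (which yields $\flaten(t_2\star s)=\flaten(t_2)\shuffle\flaten(s)$, since $t_2,s$ have smaller total count and $\star=\prec+\succ$), I would obtain $\flaten(t\prec s)=a\cdot\big(\flaten(t_1)\shuffle\flaten(t_2)\shuffle\flaten(s)\big)$. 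This equals $\big(a\cdot(\flaten(t_1)\shuffle\flaten(t_2))\big)\prec\flaten(s)=\flaten(t)\prec\flaten(s)$ by the word-level formula for $\prec$, the first letter of $\flaten(t)$ being $a$. The argument for $\succ$ is the exact mirror, using the left-comb decomposition $s=s_1\vee_b s_2$ and the word-level formula $u\succ(bV)=b\cdot(u\shuffle V)$.

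The main subtlety—rather than a genuine obstacle—lies in bookkeeping the decorations and the orientation conventions relating $t_1\vee_a t_2$ to $\B^+_a(t_2,t_1)$, where the commutativity of the shuffle of words absorbs the left/right asymmetry so that the root letter is correctly prepended. The only point requiring care is that invoking $\star=\prec+\succ$ in the inductive step uses the induction hypothesis for both smaller products simultaneously; since both are covered by the induction, no circularity arises, and the whole argument is indeed shorter than its tridendriform counterpart because the middle product is absent.
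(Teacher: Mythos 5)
Your proposal is correct and follows exactly the route the paper intends: its proof of this lemma is literally the one-line remark that the argument of lemma~\ref{lem:flat_tridend} carries over, and your induction on the total number of internal vertices with base case $\labY{a}\prec\labY{b}$ and inductive step via lemma~\ref{Lem:induction_product_dend} is precisely that adaptation, with the middle product dropped. The additional care you take in first proving the morphism property on all of $\Dend(\{x,y\})$ and then restricting to the convergent subalgebras is a sensible (and harmless) refinement of what the paper leaves implicit.
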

	\begin{proof}
		The proof is similar to the one of lemma~\ref{lem:flat_tridend}.
	\end{proof}
	
	\subsection{Relating the integral zetas}
	
		We recall the following three maps:	
		\begin{align*}
			i:\left\{ \begin{array}{rcl}
				\{x,y\} &\rightarrow & \Dend(\{x,y\}), \\
				x & \mapsto & \labY{x}, \\
				y & \mapsto &\labY{y},
			\end{array}\right. && L:\left\lbrace\begin{array}{rcl}
				\{x,y\} & \rightarrow & \A_{\{x,y\}}, \\
				x &\mapsto & \displaystyle\int_{0<t<1} g_x\dt, \\
				y &\mapsto & \displaystyle\int_{0<t<1} g_y\dt, \\
			\end{array} \right.
		\end{align*}
		and $j$ is the natural inclusion from $\{x,y\}$ into $\Wxy$. Then, let us recall (resp. define) the map of dendriform algebras $\zeta^{\Dend}: \Dend({\{x,y\}})\rightarrow \A_{\{x,y\}}$ (resp. $\Psi:\Dend(\{x,y\})\rightarrow \Wxy$) whose existence and unicity is given by the universal property of theorem~\ref{thm:univ_prop_dend} as we can see in diagrams~\ref{subfig:1freeness} (resp.~\ref{diag:2tridendfreeness}).
	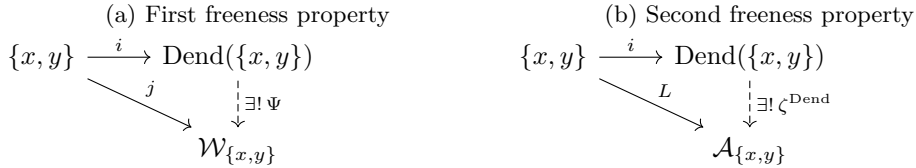
\begin{figure}[ht]
		\centering 
		\begin{subfigure}[t]{0.45\linewidth}
			\caption{First freeness property}
			\label{subfig:1freeness}
			\begin{tikzcd}
				\{x,y\} \arrow[rd, "j"] \arrow[r, "i"] & \Dend(\{x,y\}) \arrow[d ,dashed, "\exists!\,\Psi"] \\
				& \calW_{\{x,y\}}
			\end{tikzcd}
		\end{subfigure}
		\begin{subfigure}[t]{0.45\linewidth}
			\caption{Second freeness property}
			\label{subfig:2freeness}
			\begin{tikzcd}
				\{x,y\} \arrow[rd, "L"] \arrow[r, "i"] & \Dend(\{x,y\}) \arrow[d ,dashed, "\exists!\,\zeta^{\Dend}"] \\
				& \A_{\{x,y\}}
			\end{tikzcd}
		\end{subfigure}
		\caption{Diagrams for dendriform structures}
		\label{diag:dendri}
	\end{figure}
	
	\begin{thm}\label{thm:dend_diag}
		The following diagram commutes:
		\begin{tikzcd}
			\Dend(\{x,y\}) \arrow[d, "\flaten"]\arrow[r,"\zeta^{\Dend}"] & \A_{\{x,y\}} \\
			\Wxy \arrow[ur, "\zeta_{\FI}"] &
		\end{tikzcd}
	\end{thm}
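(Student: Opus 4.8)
The asserted commutativity amounts to the identity $\zeta^{\Dend}=\zeta_{\FI}\circ\flaten$ as maps $\Dend(\{x,y\})\to\A_{\{x,y\}}$. The approach mirrors the proof of Theorem~\ref{thm:zeta_tridend}, but is simpler since we work directly with binary trees and hence have no analogue of the map $\iota$ to contend with. The plan is to exploit the universal property of $\Dend(\{x,y\})$ (Theorem~\ref{thm:univ_prop_dend}): a morphism of dendriform algebras out of $\Dend(\{x,y\})$ is completely determined by its values on the two generators $i(x)=\labY{x}$ and $i(y)=\labY{y}$. I will factor the verification through the map $\Psi$ of diagram~\ref{subfig:1freeness}, showing first $\Psi=\flaten$ and then $\zeta^{\Dend}=\zeta_{\FI}\circ\Psi$.

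First I would identify $\Psi$ with $\flaten$. Both are morphisms of dendriform algebras $\Dend(\{x,y\})\to\Wxy$, the former by definition (diagram~\ref{subfig:1freeness}), the latter by Lemma~\ref{lem:flat_dend}, whose inductive proof (copied from Lemma~\ref{lem:flat_tridend}) is valid on all of $\Dend(\{x,y\})$. For $z\in\{x,y\}$ one computes, using Definition~\ref{def:flat_map_dend} and $\flaten(|)=\emptyset$,
\[
\flaten(i(z))=\flaten\bigl(\B^+_z(|,|)\bigr)=z\cdot(\emptyset\shuffle\emptyset)=z=j(z).
\]
Thus $\flaten\circ i=j$, so $\flaten$ satisfies the defining relation of $\Psi$; by the uniqueness clause of Theorem~\ref{thm:univ_prop_dend} we obtain $\Psi=\flaten$.

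It then remains to show $\zeta^{\Dend}=\zeta_{\FI}\circ\Psi$. The composite $\zeta_{\FI}\circ\Psi$ is a morphism of dendriform algebras, being the composite of $\Psi$ with $\zeta_{\FI}$ (Proposition~\ref{prop:formal_MZV_dend_map}). On generators, using $\Psi\circ i=j$ and Definition~\ref{def:formal_MZV},
\[
\zeta_{\FI}\bigl(\Psi(i(z))\bigr)=\zeta_{\FI}(j(z))=\zeta_{\FI}(z)=\int_{0<t<1} g_z\,\dt=\mathfrak{G}(z)=L(z).
\]
Hence $\zeta_{\FI}\circ\Psi$ satisfies the defining relation of $\zeta^{\Dend}$ of diagram~\ref{subfig:2freeness}, and uniqueness in Theorem~\ref{thm:univ_prop_dend} forces $\zeta^{\Dend}=\zeta_{\FI}\circ\Psi=\zeta_{\FI}\circ\flaten$, which is exactly the claimed commutativity.

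The argument presents no genuine obstacle: everything reduces to the two generator computations above together with the uniqueness part of the universal property. The only point demanding a little care is that Lemma~\ref{lem:flat_dend} must be read as asserting that $\flaten$ is dendriform on the whole free algebra $\Dend(\{x,y\})$ and not merely on the convergent subalgebra $\Dendcv$, which is indeed what its proof delivers. One should also note that, since $\A_{\{x,y\}}$ consists of \emph{formal} integrals, there are no convergence questions to settle at this stage.
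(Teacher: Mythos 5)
Your proposal is correct and follows essentially the same route as the paper: both arguments rest on $\zeta_{\FI}$ and $\flaten$ being dendriform morphisms, a check that the composite agrees with $L$ on the generators, and the uniqueness clause of the universal property of $\Dend(\{x,y\})$. The only difference is that you factor explicitly through $\Psi$ and write out the generator computations, which the paper leaves implicit.
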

	\begin{proof}
		We have shown in proposition~\ref{prop:formal_MZV_dend_map} that $\zeta_{\FI}$ is a dendriform morphism. Hence, $\zeta_{\FI}\circ \flaten$ and $\zeta^{\Dend}$ are both dendriform morphisms. Moreover, $\zeta_{\FI}\circ \flaten\circ i=L$. So, we deduce by the unicity of $\zeta^{\Dend}$ of  the universal property of theorem~\ref{prop:formal_MZV_dend_map} that $\zeta_{\FI}\circ \flaten=\zeta^{\Dend}$.
	\end{proof}
	\begin{Eg}
		For instance, considering the usual family of functions, we have:
		\[
		\zeta^{\Dend}\left(\labYY{x}{y}{y}\right)=\int_{\substack{0< t_2< t_1< 1, \\ 0< t_3 < t_1< 1}} g_xg_yg_y \dt_1\dt_2\dt_3.
		\]
		Then, evaluating this quantity gives the \AZVs of example~\ref{Eg:integral_representation_MZV}. 
	\end{Eg}
		
	Then, specializing the diagram of theorem~\ref{thm:dend_diag} to convergent trees we obtain
	\begin{Cor}\label{coro:relation_zetas_int}
		The following diagram commutes:
		\begin{tikzcd}
			\Dendcv \arrow[d, "\flaten"]\arrow[r,"\zeta^{\Dend}"] & \A_{\{x,y\}} \arrow[d,"\ev"]  \\
			\Wxycv \arrow[ur, "\zeta_{\FI}"] \arrow[r, "\zeta_{\Int}", swap] & \R
		\end{tikzcd}
	\end{Cor}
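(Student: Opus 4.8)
The plan is to obtain the square by gluing together two commuting triangles, exactly as in the proof of corollary~\ref{coro:relation_zetas}. Before doing so I would first check that every arrow restricts correctly to the convergent subspaces, so that the diagram is well-posed. By theorem~\ref{thm:known_AZVs_dend} the flattening restricts to $\flaten:\Dendcv\rightarrow\Wxycv$. Moreover $\zeta_{\FI}$ sends a word beginning with $x$ and ending with $y$ to the formal integral it indexes, which is convergent in the sense of definition~\ref{defi:ev_map_dend}; hence $\zeta_{\FI}(\Wxycv)\subseteq\FI^{\rm conv}_{\{x,y\}}$, the subspace on which $\ev$ is defined. Composing, the image of $\zeta^{\Dend}$ on $\Dendcv$ lands in $\FI^{\rm conv}_{\{x,y\}}$, so the vertical map $\ev$ in the square genuinely applies to everything coming from $\Dendcv$.

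Next I would invoke the two triangles. The upper triangle is the identity $\zeta^{\Dend}=\zeta_{\FI}\circ\flaten$, which is precisely the content of theorem~\ref{thm:dend_diag} read on the subspace $\Dendcv$. The lower triangle is the identity $\zeta_{\Int}=\ev\circ\zeta_{\FI}$, which is exactly the relation recorded immediately after definition~\ref{def:formal_MZV}. Chaining these gives, for every $t\in\Dendcv$,
\[
\ev\circ\zeta^{\Dend}(t)=\ev\circ\zeta_{\FI}\circ\flaten(t)=\zeta_{\Int}\circ\flaten(t),
\]
which is the commutativity of the outer square.

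There is no serious obstacle here: the statement is a formal consequence of theorem~\ref{thm:dend_diag} (itself already proved via the universal property of $\Dend(\{x,y\})$) together with the factorisation $\zeta_{\Int}=\ev\circ\zeta_{\FI}$ of definition~\ref{def:formal_MZV}. The only point requiring a little care---and the closest thing to a subtlety---is the convergence bookkeeping of the first paragraph: one must confirm that restricting to $\Dendcv$ indeed lands, after $\zeta^{\Dend}$, inside the domain $\FI^{\rm conv}_{\{x,y\}}$ of $\ev$, since $\ev$ is not defined on all of $\A_{\{x,y\}}$. This is immediate from $\flaten(\Dendcv)=\Wxycv$ and the convergence of the integrals indexed by words in $\Wxycv$.
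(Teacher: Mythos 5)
Your proof is correct and matches the paper's argument: the paper likewise obtains this square by specialising theorem~\ref{thm:dend_diag} to convergent trees and combining it with the factorisation $\zeta_{\Int}=\ev\circ\zeta_{\FI}$ from definition~\ref{def:formal_MZV}. Your explicit check that $\zeta^{\Dend}(\Dendcv)$ lands in $\FI^{\rm conv}_{\{x,y\}}$, the actual domain of $\ev$, is a welcome piece of bookkeeping that the paper leaves implicit.
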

	In particular, we obtain that dendriform zeta values are integral \AZVs{}, and also linear combinations (with integer coefficients) 	of \MZVs{}, given by
	\[
	\forall t\in\Dendcv, \ev \circ \zeta^{\Dend}(t)=\zeta^T_{\Int}(t). 
	\]
	Finally, notice that this construction implies a new property for integral \AZVs{}
	\begin{Cor}\label{cor:azvs_dendri_alg}
		The map $\zeta_{\Int}^T:\Dendcv\rightarrow \R$ is a morphism of dendriform algebras. In particular, for any couple of tree $(t_1,t_2)\in(\Dendcv)^2:$
		\[
		\zeta_{\Int}^T(t_1\star t_2)=\zeta_{\Int}^T(t_1)\cdot \zeta_{\Int}^T(t_2).
		\]
	\end{Cor}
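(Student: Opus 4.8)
The plan is to mirror the proof of corollary~\ref{coro:AZV_alg_morph}, simply substituting the dendriform ingredients for the tridendriform ones. The three facts that do all the work are already available: theorem~\ref{thm:known_AZVs_dend} gives the factorisation $\zeta^T_{\Int}=\zeta_{\Int}\circ\flaten$, lemma~\ref{lem:flat_dend} tells us that $\flaten:\Dendcv\to\Wxycv$ is a morphism of dendriform algebras (hence in particular intertwines the shuffles $\star$ and $\shuffle$), and the second equality of equation~\eqref{eq:shuffle_stuffle_zeta} says that $\zeta_{\Int}$ is an algebra morphism for the shuffle product $\shuffle$ of $\Wxy$. Before chaining these, I would first record that $\Dendcv$ is a dendriform subalgebra of $\Dend(\{x,y\})$, so that $t_1\star t_2$ stays in the domain of $\zeta^T_{\Int}$; this is the exact analogue of the remark that $\Treecv$ is a tridendriform subalgebra, and it follows from the combinatorial description of $\prec,\succ$ in theorem~\ref{thm:produit_dend} together with the characterisation of convergent binary trees (root decorated by $x$, leaves by $y$): in a shuffle each ladder node has a single preimage, so the root of every term of $t_1\star t_2$ inherits the root decoration of $t_1$ or of $t_2$, i.e. remains $x$, and the leaf condition is likewise preserved.

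With the domain issue settled, the explicit identity follows from the four-step computation
\begin{align*}
\zeta_{\Int}^T(t_1\star t_2)
&= \zeta_{\Int}\bigl(\flaten(t_1\star t_2)\bigr) \\
&= \zeta_{\Int}\bigl(\flaten(t_1)\shuffle \flaten(t_2)\bigr) \\
&= \zeta_{\Int}\bigl(\flaten(t_1)\bigr)\cdot \zeta_{\Int}\bigl(\flaten(t_2)\bigr) \\
&= \zeta_{\Int}^T(t_1)\cdot \zeta_{\Int}^T(t_2),
\end{align*}
where the first and last equalities use theorem~\ref{thm:known_AZVs_dend}, the second uses that $\flaten$ is a dendriform (hence $\star$-to-$\shuffle$) morphism by lemma~\ref{lem:flat_dend}, and the third uses equation~\eqref{eq:shuffle_stuffle_zeta}. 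Equivalently, this is just reading off the outer square of the commuting diagram of corollary~\ref{coro:relation_zetas_int}.

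The only genuinely delicate point is the wording \emph{morphism of dendriform algebras} for the target $\R$: since $\R$ carries no interesting dendriform structure and the evaluation map $\ev$ (through which $\zeta^T_{\Int}=\ev\circ\zeta^{\Dend}$ factors) forgets the half-products $\prec,\succ$, the assertion should be understood as the statement that $\zeta^T_{\Int}$ respects the associated \emph{associative} products, $\star$ on $\Dendcv$ and multiplication on $\R$, which is exactly the displayed identity. In other words, the substantive content is the $\star$-morphism property proved above, and the dendriform phrasing records that this associative morphism is induced by the honest dendriform morphism $\flaten$ followed by the shuffle-algebra morphism $\zeta_{\Int}$. No further estimates or convergence arguments are needed, since all analytic subtleties have already been absorbed into theorem~\ref{thm:known_AZVs_dend} and equation~\eqref{eq:shuffle_stuffle_zeta}.
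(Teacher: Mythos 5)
Your proposal is correct and follows essentially the same route as the paper's own proof: note that $\Dendcv$ is a dendriform subalgebra, then chain the factorisation $\zeta^T_{\Int}=\zeta_{\Int}\circ\flaten$, the dendriform-morphism property of $\flaten$ (lemma~\ref{lem:flat_dend}), and the $\shuffle$-morphism property of $\zeta_{\Int}$. The extra justification you give for the subalgebra claim (the paper merely calls it clear) and the remark on how to read ``morphism of dendriform algebras'' with target $\R$ are sensible additions but do not change the argument.
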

	\begin{proof}
		It is clear from the definition that $\Dendcv$ is a dendriform subalgebra of $\Dend(\{x,y\}).$ Then for any couple $(t_1,t_2)\in(\Dendcv)^2$ we have:
		\begin{align*}
			\zeta^T_{\Int}(t_1\star t_2) & = \zeta_{\Int}\circ\flaten(t_1\star t_2)\tag*{by corollary~\ref{coro:relation_zetas_int}} \\
			& = \zeta_{\Int}\circ\left(\flaten(t_1)\shuffle \flaten(t_2)\right) \tag*{by lemma~\ref{lem:flat_dend}} \\
			& = \zeta_{\Int}\circ\flaten(t_1)\zeta_{\Int}\circ\flaten(t_2)\tag*{since $\zeta_{\Int}$ is an algebra morphism for $\shuffle$} \\
			&=\zeta^T_{\Int}(t_1)\zeta_{\Int}^T(t_2)\tag*{by corollary~\ref{coro:relation_zetas_int}.} 
		\end{align*}
		Hence the corollary is proven.
	\end{proof}	
	
	So, we have solved the issue of finding an associative product generalizing the shuffle of words that is still compatible for integral \AZVs{}. Then, we sum up our results in diagram~\ref{diag:sumup} given in the conclusion.

\section{\SZVs{}} \label{sec:Shintani}

Let us start by introducing (mutlivariate) \SZVs{}. The following definition is borrowed from~\cite{matsumoto2003mordell}.
\begin{defi} \label{def:shintani}	
	\begin{itemize}
		\item Let $\Sigma_{n\times r}(\R_{\geq 0})$ be the set of $n\times r$ matrices with real non negatives entries, and with, at least, one non zero argument in each row and each column.
		\item Given a matrix $A = \{a_{ij}\}_{1\leq i \leq n, 1\leq j \leq r}\in \Sigma_{n\times r}(\R_{\geq 0})$ and a word $\omega\in \N*,$ we define the \emph{\Shin{} zeta function} associated to $A$ as
		\begin{equation}\label{shintani}
			\zeta_A(\omega) = \sum_{m_1\geq 1}\cdots \sum_{m_r\geq 1}(a_{11}m_1 + \cdots + a_{1r}m_r)^{-\omega_1}\cdots (a_{n1}m_1 + \cdots + a_{nr}m_r)^{-\omega_r}.
		\end{equation}
	\end{itemize}
\end{defi}
The sum \eqref{shintani} is absolutely convergent if $Re(\omega_i)>r$ for every $1\leq i \leq n.$ A better convergence result was obtained in~\cite{lopez2023milnor} but is rather technical and we will omit here.

Given a tree $t\in \Dendcv$ we will associate its arborified zeta value $\zeta^T_{\Int}(t)$ (definition~\ref{def:azvs_integrals}) to a \SZV{} $\zeta_{A_t}(\omega_t).$ For that, we need the following definitions, which are from~\cite{clavier2024generalisations}.

\begin{defi}
	Given  $t\in \Dend(\{x,y\})$ we define
	\begin{equation*} 
		\nu_y(t)=\{v\in \nu(t)\,|\, d(v)=y\}; \quad
		\nu_x(t)=\{v\in \nu(t)\,|\, d(v)=x\}.
	\end{equation*}

	A \emph{bifurcated vertex} is a vertex of $t$ that has more two descendants that are internal vertices. Let 
	\[\B(t) \coloneqq \{v\in \nu(t) \,|\, v\in \nu_y(t) \quad \text{or} \quad v\in \nu_x(t) \; \text{and} \; v \; \text{is a bifurcated vertex} \}.\]

	Let $(t,d_{t})$ be binary tree decorated by $\{x,y\}$ and $v\in \nu(t)$. A \emph{segment} ${s_v = (v_1,\dots,v_n = v)}$ is a non-empty path in $t$ such that $v_n = v\in \B(t),$ $v_i \not\in \B(t),$ for all ${i \in \{1,\dots, n-1\}}$ and $a(v_1) \in \B(t),$ where $a(v_1)$ is the direct ancestor of $v_1.$ We call $|s_v| = n$ the \emph{length} of $s_v.$ We say that $s_v \leq s_{v'}$ if $v\leq v'$ in $t.$
	
	We set $S(t) = \{s_v \,|\, v\in \B(t)\}.$
\end{defi}
In words, a segment $s_v$ of $t$ is a path that ends in an element of $\B(t)$ and contains every node decorated by $x$ that are not bifurcated between $v$ and it's first ancestor that is also in $\B(t).$ 

\medskip

We will associate the tree $t \in  \Dendcv$ to a pair $(A_t,\omega_t)$ with $A_t\in \Sigma_{|\B(t)|\times |\nu_y(t)|}(\R_{\geq 0})$ and  $\omega_t\in\calW_{\N^*,|\B(t)|}.$ 

\begin{defi} \label{def:matrix_word_Shintani}
 Let $t\in\Dendcv$ be a tree different from $|$. First enumerate the elements of $\B(t)$ from $1$ to $|\B(t)|$ and the element of $|\nu_y(t)|$ from $1$ to $|\nu_y(t)|$. Then the matrix ${A_t\coloneqq(a_{ij})_{(i,j)\in\{1,\dots,|\B(t)|\}\times\{1,\dots |\nu_y(t)|\}}}$ is defined by
  \begin{equation*}
   a_{ij}\coloneqq \begin{cases}
            1 \text{ if the vertex in $\B(t)$ numbered by $i$ is below the vertex in $\nu_y(t)$ numbered by $j$,}\\
            0 \text{ otherwise,}
           \end{cases}
  \end{equation*}
  for each $(i,j)\in\{1,\cdots,|\B(t)|\}\times\{1,\cdots |\nu_y(t)|\}$.
  
  For the word $\omega_t$, for any $i\in \{1,\cdots,|\B(t)|\}$, set the $i$-th letter of $\omega_t$ to be $|s_v|$, where $|s_v|$ is the is the length of $s_v.$
\end{defi}
	\begin{Eg}
	Consider the following tree \begin{equation}\label{eq:shintree}
		t=\begin{tikzpicture}[line cap=round,line join=round,>=triangle 45,x=0.25cm,y=0.25cm, baseline=(base)]
			%Center
			\node (base) at (0,2) {};
			%Vertices    
			%Edges
			\draw [line width=.5pt][color=black] (0.,-1.)-- (0,0.);
			\draw [line width=.5pt][color=black] (0.,0.)-- (4,4.);
			\draw [line width=.5pt][color=black] (0.,0.)-- (-4,4.);
			\draw [line width=.5pt][color=black] (-2.,2.)-- (0,4.);
			\draw [line width=.5pt][color=black] (-0.5,3.5)-- (-1,4);
			\draw [line width=.5pt][color=black] (-3.5,3.5)-- (-3,4);
			\draw [line width=.5pt][color=black] (1.5,1.5)-- (1,2);
			\draw [line width=.5pt][color=black] (2.5,2.5)-- (2,3);
			\draw [line width=.5pt][color=black] (3.5,3.5)-- (3,4.);
			%Décoration
			\draw (0,0) node[above][color=black]{\scriptsize{$x$}};
			\draw (-2,2) node[above][color=black]{\scriptsize{$x$}};
			\draw (-0.5,3.5) node[above][color=black]{\scriptsize{$y$}};
			\draw (-3.5,3.5) node[above][color=black]{\scriptsize{$y$}};
			\draw (1.5,1.5) node[above][color=black]{\scriptsize{$y$}};
			\draw (2.5,2.5) node[above][color=black]{\scriptsize{$x$}};
			\draw (3.5,3.5) node[above][color=black]{\scriptsize{$y$}};
			%Correspondance couleurs pour lire la matrice
			\filldraw[color=green] (0,0) circle (1pt) ;
			\filldraw[color=blue] (3.5,3.5) circle (1pt) ;
			\filldraw[color=red] (-3.5,3.5) circle (1pt) ;
			\filldraw[color=yellow] (-0.5,3.5) circle (1pt) ;
			\filldraw[color=cyan] (-2,2) circle (1pt) ;
			\filldraw[color=brown] (1.5,1.5) circle (1pt) ;
		\end{tikzpicture},
	\end{equation}
	from the construction above we have 
	\begin{equation*}
		A_t = \begin{pNiceMatrix}[first-col, first-row] 
			& \circlecoloured{red} & \circlecoloured{yellow} & \circlecoloured{blue} & \circlecoloured{brown}\\
			\circlecoloured{red} & 1 & 0 & 0 & 0\\
			\circlecoloured{yellow} & 0 & 1 & 0 & 0\\
			\circlecoloured{cyan} & 1 & 1 & 0 & 0\\
			\circlecoloured{blue} & 0 & 0 & 1 & 0\\
			\circlecoloured{brown} & 0 & 0 & 1 & 1\\
			\circlecoloured{green} & 1 & 1 & 1 & 1
		\end{pNiceMatrix} \quad \text{and} \quad \omega_t = 111211.
	\end{equation*}
\end{Eg}
\begin{Rq}
	Permuting the rows of the matrix does not change the value of the \SZV{}.
\end{Rq}

The main result of this section is the existence of a series representation of $\ev\circ\zeta^{\Dend}(t)$. We will omit the proof since it is an adaptation of~\cite[Theorem 1.19]{clavier2024generalisations}. Indeed, the only difference is that bifurcated vertices can now be decorated by $x$. It is easy to see that this does not change the proof. It does change the matrix $A_t$ we built above, hence we state this result with \SZVs{} rather than with conical zeta values.

\begin{thm} \label{thm:dend_zeta_series}
	For any tree $t\in \Dendcv$ we have
	\begin{equation}\label{eq:sum_thm}
		\ev\circ\zeta^{\Dend}(t)=\zeta_{\Int}^T(t) = \sum_{\substack{n_v\geq 1\\v\in \nu_y(t)}}\prod_{v\in \B(t)}\left(\sum_{\substack{v'\in \nu_y(t)\\v'\succeq v}}n_{v'}\right)^{-|s_v|}
		=\zeta_{A_t}(\omega_t).
	\end{equation}
	where the matrix $A_t$ and the word $\omega_t$ are constructed in definition~\ref{def:matrix_word_Shintani}.
\end{thm}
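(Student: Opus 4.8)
The first equality $\ev\circ\zeta^{\Dend}(t)=\zeta_{\Int}^T(t)$ is already furnished by corollary~\ref{coro:relation_zetas_int}, so nothing new is needed there. The last equality $=\zeta_{A_t}(\omega_t)$ is a matter of unwinding definitions: writing $m_1,\dots,m_{|\nu_y(t)|}$ for the summation indices attached to the vertices of $\nu_y(t)$ (so that $m_j=n_{v_j}$), the linear form $\sum_j a_{ij}m_j$ appearing in~\eqref{shintani} equals $\sum_{v'\in\nu_y(t),\,v'\succeq v_i}n_{v'}$, precisely because, by definition~\ref{def:matrix_word_Shintani}, $a_{ij}=1$ if and only if the $j$-th vertex of $\nu_y(t)$ lies above the $i$-th vertex $v_i$ of $\B(t)$; since moreover the $i$-th letter of $\omega_t$ is $|s_{v_i}|$, the product in~\eqref{eq:sum_thm} is term-by-term identical to the Shintani series. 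Hence the whole content of the theorem is the middle equality, namely rewriting the iterated integral $\zeta_{\Int}^T(t)$ as an iterated series.

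For this I would carry out the classical Kontsevich computation, performed over the tree. First I expand each factor $g_y(t_v)=\tfrac{1}{1-t_v}=\sum_{n_v\geq 1}t_v^{n_v-1}$ attached to a vertex $v\in\nu_y(t)$; every resulting factor is nonnegative on $\Delta_t\subseteq(0,1)^{|\nu(t)|}$, so Tonelli's theorem lets me interchange the $(n_v)_{v\in\nu_y(t)}$ summations with the integration freely. For fixed indices the integrand is the monomial $\prod_{v\in\nu_x(t)}t_v^{-1}\prod_{v\in\nu_y(t)}t_v^{n_v-1}$, whose integral over the tree simplex $\Delta_t$ (where $t_v<t_{a(v)}$ for the parent $a(v)$, and the root variable is $<1$) I compute by integrating the variables from the leaves down to the root. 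Writing $I_v(s)$ for the result of integrating out the whole subtree rooted at $v$ with $t_{a(v)}=s$ held as a parameter (so $t_v$ runs over $(0,s)$ and distinct children are integrated independently), the statement to prove by induction on the subtree is
\begin{equation*}
 I_v(s)=s^{N_v}\prod_{\substack{w\in\B(t)\\ w\succeq v}}\Big(\sum_{\substack{w'\in\nu_y(t)\\ w'\succeq w}}n_{w'}\Big)^{-c_w(v)},\qquad N_v\coloneqq\!\!\sum_{\substack{w\in\nu_y(t)\\ w\succeq v}}\!\! n_w,
\end{equation*}
where $c_w(v)$ counts the vertices of the segment $s_w$ lying in the subtree rooted at $v$. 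Each step $I_v(s)=\int_0^s t_v^{\varepsilon_v-1}\prod_{c}I_c(t_v)\,\dt_v$, with $\varepsilon_v=0$ for $v\in\nu_x(t)$ and $\varepsilon_v=n_v$ for $v\in\nu_y(t)$ and $c$ ranging over the children of $v$, turns the accumulated exponent $\sum_c N_c$ into $N_v=\varepsilon_v+\sum_c N_c$ and produces exactly one new factor $N_v^{-1}=\big(\sum_{w'\succeq v}n_{w'}\big)^{-1}$; evaluating at the root with $s=1$ yields $c_w(v)=|s_w|$ for every $w\in\B(t)$, hence the announced series after resumming over the $n_v$.

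The step that requires care — and the only place where the decoration of bifurcated vertices enters — is identifying to which segment each new factor $N_v^{-1}$ belongs. The point is that a vertex $v\notin\B(t)$ is a non-bifurcated $x$-vertex, so it has a single internal child and lies on the chain of a unique segment $s_w$; along that chain no new $y$-vertex is introduced, whence $N_v=\sum_{w'\succeq w}n_{w'}$ is the \emph{same} linear form as the one attached to the endpoint $w$. Thus the $|s_w|$ successive integrations over the vertices of $s_w$ all contribute this one linear form, producing the power $-|s_w|$ in the final product, while a vertex $v\in\B(t)$ opens a fresh segment $s_v$ and contributes the first factor of its own linear form. Verifying this invariant propagation, together with the base case of a $y$-decorated leaf where $I_v(s)=\int_0^s t_v^{n_v-1}\dt_v=s^{n_v}/n_v$, is the bulk of the argument; the remaining bookkeeping of the $c_w(v)$'s is maintained automatically by the induction. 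This reproduces the argument of~\cite[Theorem~1.19]{clavier2024generalisations}, the only genuine change being that $\B(t)$ now also contains $x$-decorated bifurcated vertices, which the computation above accommodates without modification.
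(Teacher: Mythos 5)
Your proof is correct and follows exactly the route the paper intends: the authors omit the argument entirely, stating only that it is an adaptation of~\cite[Theorem~1.19]{clavier2024generalisations} in which bifurcated vertices may now carry an $x$, and your Kontsevich-style expansion of $g_y$ into geometric series followed by leaf-to-root integration over $\Delta_t$, with the segment bookkeeping for non-bifurcated $x$-vertices, is precisely that adaptation written out in full. No gap; if anything, your version supplies the details the paper leaves to the reader.
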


	\begin{Eg}
		From the tree in equation~\eqref{eq:shintree} we have
		\begin{equation*}
			\zeta(t) = \sum_{m_1\geq 1}\sum_{m_2\geq 1}\sum_{m_3\geq 1}\sum_{m_4\geq 1}\sum_{m_5\geq 1}\dfrac{1}{m_1 m_2 (m_1+m_2)m_3^2(m_3+m_4)(m_1+m_2+m_3+m_4+m_5)}.
		\end{equation*}
	\end{Eg}

	\begin{Rq}
		We found in this section a class of \SZVs{} that are convergent but do not follow the sufficient convergent condition of~\cite{matsumoto2003mordell}. Furthermore, this result (together with theorem~\ref{thm:known_AZVs_dend} also implies that a family of \SZVs{} can be written as linear combinations of \MZVs{} with rational coefficients.
	\end{Rq}

\section*{Conclusion}

	\addcontentsline{toc}{section}{Conclusion}

The diagram~\ref{diag:sumup} below summarizes all the known results (with the $\mathfrak{s}$ map is the Kontsevitch map detailed in the introduction) and the ones from our construction in only one picture.
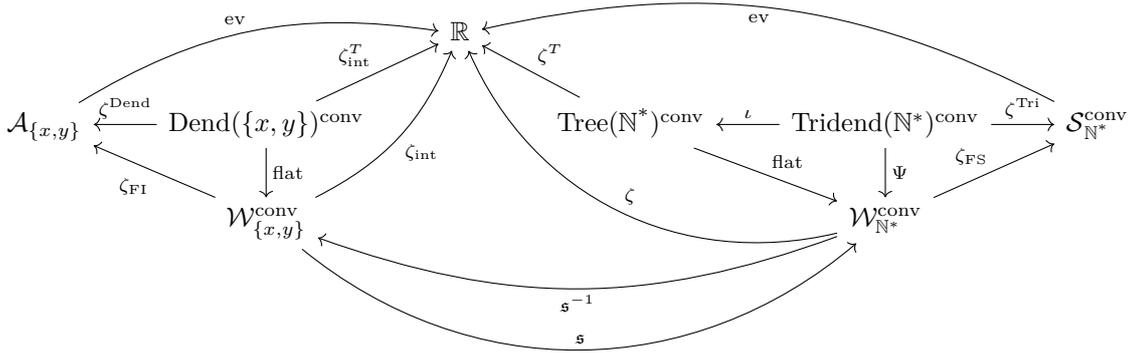
\begin{figure}[ht]
	\centering
	\begin{tikzcd}
	& & \R & & & \\
	\A_{\{x,y\}} \arrow[urr,"\ev",bend left=20] & \Dendcv \arrow[l, "\zeta^{\Dend}", swap] \arrow[d, "\flaten"] \arrow[ur,"\zeta^T_{\rm int}"] &  & \Treecv \arrow [ul,"\zeta^T", swap] \arrow[rd, "\flaten"] & \Tridend(\N^*)^{\rm conv} \arrow[l, "\iota", swap] \arrow[r,"\zeta^{\rm Tri}"] \arrow[d, "\Psi"] & \Suumcv \arrow[ulll, "\ev", bend right=20]  \\
	& \Wxycv \arrow[rrr, "\mathfrak{s}", bend right = 40] \arrow[uur, "\zeta_{\Int}"swap, , bend right=20] \arrow[ul,"\zeta_{\FI}"] &  &  & \Wcv \arrow[lll, "\mathfrak{s}^{-1}", bend left=20] \arrow[ur, "\zeta_{\FS}"] \arrow[uull, "\zeta", swap, bend left=40] &  
	\end{tikzcd}
	\caption{Picture of the general situation}
	\label{diag:sumup}
\end{figure}

With our construction, we have:
\begin{itemize}
	\item introduced two intermediate objects, namely formal series and formal integrals, in order to factorize $\zeta$ and $\zeta_{\Int}$ with respectively a tridendriform morphism and a dendriform morphism;
	\item built two new generalisations of $\zeta_{\Int}$ and $\zeta$ using the universal properties of Schroeder trees in the categories of dendriform and tridendriform algebras and related these generalisations to usual \MZVs{} and arborified zeta values $\zeta^T_{\Int}$ and $\zeta^T$;
	\item built two associative products $\star$ for binary trees and $*$ for \Sch{} trees (in a non-inductive way) such that $\zeta_{\Int}^T$ and $\zeta^T$ are respectively morphisms using universal properties of the free dendriform and free tridendriform algebras;
	\item shown that any iterated integral $\ev\circ\zeta^{\Dend}(t)$ can be written as a multiple series which is a \Shin{}'s \MZVs{}.
\end{itemize}

From this work some new perspectives arise, which could be tackled in future projects:
\begin{itemize}
	\item Could we find an analogous of the Kontsevitch's map $\mathfrak{s}$ for trees such that the diagram is still commutative ? 
	
	At first it may be not so simple. For instance, 
	\[
	\zeta^T_{\Int}\left(\labYY{x}{y}{y}\right)=\sum_{\substack{n\geq 1 \\ m\geq 1}}\frac{1}{n\cdot m \cdot (n+m)}=2\zeta(2~1),
	\]
	but one cannot find a \emph{single} tree $t \in \Tree(\N^*)$ giving rise to the above sum as the tree should have three vertices $v_1,v_2$ and $v_3$ satisfying $v_1<v_3$ and $v_2<v_3$. So if it exists, it should be \begin{tikzpicture}[x=0.3cm,y=0.3cm]
		\draw (0,0)--(1,1);
		\draw (1,1)--(2,0);
		\filldraw (0,0) circle (1pt);
		\filldraw (1,1) circle (1pt);
		\filldraw (2,0) circle (1pt);
	\end{tikzpicture} but this obviously not a tree. 
	
The recent work~\cite{Fan25} proposes a new generalisation of Kontsevitch's map that has some interesting properties. Seeing how this map fits into the picture displayed in figure~\ref{diag:sumup} could solve this problem.
	\item We noticed with this construction that we are able to use universal properties of free dendriform and tridendriform algebras. Could we do this game for other type of algebras ? 
	\item Both free dendriform and free tridendriform algebras have a coalgebra structure. Is it possible to interpret these coalgebra structures for \MZVs{} ? 
\end{itemize}

	\bibliography{biblio_MZV_Fin}
	\bibliographystyle{plain}
	
\end{document}